\newtheorem{theorem}{Theorem}[section]
\newtheorem{lemma}[theorem]{Lemma}
\newtheorem{proposition}[theorem]{Proposition}
\theoremstyle{definition}
\newtheorem{definition}{Definition}[section]
\newtheorem{example}{Example}[section]
\newtheorem{remark}[example]{Remark}
\title{On oscillatory integrals with Hölder phases}
\author{Gaétan Leclerc}
\date{ }
\begin{document}

\maketitle

\begin{abstract}

We exhibit a family of autosimilar Hölder maps that satisfies a \say{fractal} version of the Van Der Corput Lemma, despite not being absolutely continuous. The result is a direct consequence of a recent work of Sahlsten and Steven \cite{SS20}, which is based on a powerful theorem of Bourgain known as a \say{sum-product phenomenon} estimate. We give a substantially simpler proof of this fact in our particular context, using an elementary method inspired from \cite{BD17} to check the \say{non-concentration estimates} that are needed to apply the sum-product phenomenon. This method allows us to gain additional control over the decay rate.

\end{abstract}

\section{About oscillatory integrals}

\subsection{The Van Der Corput Lemma}

It is an understatement to say that oscillatory integrals play a major role in modern analysis. One of the simplest questions we can ask about oscillatory integrals is the following: give some sufficient conditions on the phase function $\psi$ so that so that the associated integral exhibit power decay, in the sense that there exists $\delta > 0$ such that for all large $|\xi|$, we have $$ \left| \int_0^1 e^{i \xi \psi(x)} dx \right| \leq  |\xi|^{-\delta} .$$
The first result of this kind that comes to mind is the useful Van Der Corput lemma, which we recall:

\begin{lemma}[Van Der Corput]

Let $\psi : [0,1] \rightarrow \mathbb{R}$ be a  $C^{k+1}$ phase ($k \geq 2$) such that the $k$-th derivative satisfies $\phi^{(k)} \geq 1$. Then, there exists $C_k>0$ such that, for all $\xi \geq 1$, 
$$ \left|  \int_0^1 e^{i \xi \psi(x)} dx \right| \leq C_k \xi^{-1/k} .$$

\end{lemma}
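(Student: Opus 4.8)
The plan is to prove the statement by induction on $k$, following the classical argument, and to establish along the way the slightly more flexible claim: there is a constant $C_k$, depending only on $k$, such that for every subinterval $[a,b]\subseteq[0,1]$, every $\lambda>0$, and every $\psi\in C^{k+1}([a,b])$ with $\psi^{(k)}\geq\lambda$ (or $\psi^{(k)}\leq-\lambda$) — and, when $k=1$, with $\psi'$ monotonic — one has $|\int_a^b e^{i\xi\psi(x)}dx|\leq C_k(\lambda\xi)^{-1/k}$, a bound which is notably independent of $b-a$. Replacing $\psi$ by $-\psi$ conjugates the integral without changing its modulus, so throughout we may assume $\psi^{(k)}\geq\lambda$.

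For the base case $k=1$, I would write $e^{i\xi\psi}=\frac{1}{i\xi\psi'}\frac{d}{dx}\big(e^{i\xi\psi}\big)$ and integrate by parts. The boundary term is bounded by $\frac{2}{\lambda\xi}$ using $|\psi'|\geq\lambda$, and the remaining term is $\frac{1}{\xi}\int_a^b e^{i\xi\psi}\,\frac{d}{dx}(1/\psi')\,dx$; since $\psi'$ is monotonic and nonvanishing, $\frac{d}{dx}(1/\psi')$ has constant sign, so this term is at most $\frac{1}{\xi}\big|\frac{1}{\psi'(b)}-\frac{1}{\psi'(a)}\big|\leq\frac{1}{\lambda\xi}$. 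Hence $C_1=3$ works.

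For the inductive step, assume the claim for $k-1\geq1$ and suppose $\psi^{(k)}\geq\lambda$. Then $\psi^{(k-1)}$ is strictly increasing, so it has at most one zero $c$ in $[a,b]$ (if there is none, take $c$ to be the nearest endpoint). For a parameter $\delta>0$ to be chosen, split the integral over $[a,b]\cap(c-\delta,c+\delta)$ — of length $\leq2\delta$, hence contributing $\leq2\delta$ — and over the (at most two) complementary intervals, on each of which the mean value theorem gives $|\psi^{(k-1)}|\geq\delta\lambda$. On those intervals the inductive hypothesis applies — when $k-1=1$ the required monotonicity of $\psi'$ follows from $\psi''\geq\lambda>0$ — contributing $\leq2C_{k-1}(\delta\lambda\xi)^{-1/(k-1)}$. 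Altogether $|\int_a^b e^{i\xi\psi}|\leq2\delta+2C_{k-1}(\delta\lambda\xi)^{-1/(k-1)}$, and the choice $\delta=(\lambda\xi)^{-1/k}$ balances the two terms, each then being a multiple of $(\lambda\xi)^{-1/k}$; this closes the induction with $C_k=2+2C_{k-1}$, and specializing to $\lambda=1$ gives the stated lemma.

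The remaining points are routine bookkeeping: the continuity arguments behind the sign considerations, and the case where $\psi^{(k-1)}$ has no zero in $[a,b]$ (where the interval $[a,b]$ itself already satisfies $|\psi^{(k-1)}|\geq\delta\lambda$ off a $\delta$-neighbourhood of the relevant endpoint). The genuinely substantive step — and the one to execute carefully — is the split near the minimum of $|\psi^{(k-1)}|$ combined with the scaling choice $\delta=(\lambda\xi)^{-1/k}$, since this is precisely what manufactures the exponent $1/k$. No sum-product input is needed for this lemma; the point of the paper is exactly that the smooth hypothesis $\psi^{(k)}\geq1$ is vastly stronger than anything available for the Hölder phases treated later.
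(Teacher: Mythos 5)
Your argument is correct: it is the classical proof of the Van der Corput estimate (base case $k=1$ by integration by parts using the monotonicity and non-vanishing of $\psi'$, then induction by splitting off a $\delta$-neighbourhood of the unique zero, or of the minimising endpoint, of $\psi^{(k-1)}$, applying the inductive hypothesis with lower bound $\delta\lambda$ on the complement, and balancing with $\delta=(\lambda\xi)^{-1/k}$), and the bookkeeping you describe (constant sign of $\frac{d}{dx}(1/\psi')$, monotonicity of $\psi'$ when $k-1=1$ supplied by $\psi''\geq\lambda$, uniformity in $b-a$ needed for the induction) all checks out, yielding $C_k=2+2C_{k-1}$ with $C_1=3$. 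Note, however, that the paper does not prove this lemma at all: it is recalled as classical background (Lemma 1.1) to motivate the ``fractal'' analogue, so there is no proof in the text to compare yours against; your write-up simply supplies the standard textbook argument, which is exactly what the statement calls for, and you are right that none of the sum-product machinery of the paper is relevant here.
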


In substance, Lemma 1.1 states that our oscillatory integral exhibit power decay as soon as our (smooth) phase satisfies a form a non-concentration hypothesis (the condition on the derivative). Variants of this lemma (e.g. the non-stationary phase) also relies on a non-concentration of the phase, and on its smoothness. It is surprising to find that, to the author's knowledge, no deterministic example of (non-absolutely continuous) Hölder maps $\psi$ are known to satisfies this type of result. Yet, non-smooth maps appear regularly, and one must find a way to deal with them: for example, in the context of hyperbolic dynamical systems, the stable/unstable foliation is known to be only Hölder regular. Conjugacy between dynamical systems are often only Hölder, and invariant sets (such as Julia sets in the context of conformal dynamics) are often very non-smooth. \\

The goal of our paper is to construct deterministic examples of Hölder phases satisfying a Van Der Corput type of estimate, despite not being absolutely continuous. The lack of smoothness of the phase will be replaced by a form of autosimiliarity which will play a key role in the proof.

\subsection{A probabilistic example: the Brownian motion}

Before stating our main result, we will discuss some estimates that have been proved in a random setting by Kahane \cite{Ka85}. Let $(X_n)_{n \geq 0}$ and $(Y_n)_{n \geq 1}$ be some i.i.d. random variables following a normalized Gaussian distribution $\mathcal{N}(0,1)$. We define the Brownian motion (or Wiener process) on $[0,1]$ by the following stochastic process:
$$ W(t) := X_0 t + \sqrt{2} \sum_{n=1}^\infty \frac{1}{2 \pi n} \Big( X_n \sin( 2 \pi n t) 
 + Y_n (1- \cos(2 \pi n t))\Big) ,$$
where convergence takes place almost surely in the $L^2(0,1)$ sense. (Indeed, the fact that \\
$\mathbb{E}( \sum_{k \geq 1} \exp(-X_k^2/4) k^{-2})< \infty$ implies that the series inside is finite a.s., and so $X_k = O(\sqrt{\ln \ln k})$.) It is known that, for any $\alpha < 1/2$, $W$ defines almost surely a $\alpha$-Holder map, and for any $\alpha \geq 1/2$ is almost surely not $\alpha$-Holder. (\cite{Ka85}, p. 235, Th. 2 p. 236 and Th. 3 p. 241) The following estimate holds.

\begin{proposition}[\cite{Ka85}, p. 255]

Almost surely, there exists $C>0$ such that for all $|\xi| \geq 1,$ we have $$ \left| \int_0^1 e^{i \xi W(t)} dt \right| \leq C |\xi|^{-1} \sqrt{\ln |\xi|}. $$

\end{proposition}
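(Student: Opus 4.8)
The plan is to reduce the statement to a sharp even-moment estimate for $I(\xi):=\int_0^1 e^{i\xi W(t)}\,dt$, namely
$$ \mathbb{E}\,|I(\xi)|^{2p}\ \le\ C^p\,p!\,\xi^{-2p}\qquad(p\ge 1,\ \xi\ge 1), $$
then to convert this into a sub-Gaussian tail bound for $\xi I(\xi)$, and finally to upgrade it to an almost sure uniform estimate by discretizing the frequency and running a Borel--Cantelli argument over the dyadic blocks $[2^n,2^{n+1}]$. The factor $\sqrt{\ln|\xi|}$ will appear precisely as $\sqrt{\log_2|\xi|}$, the square root of the number of dyadic scales below $|\xi|$, which is the price of the union bound over those scales. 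Since $I(-\xi)=\overline{I(\xi)}$ it suffices to treat $\xi\ge 1$, and since $|I(\xi)|\le 1$ always, only the regime of large $\xi$ carries content.

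For the moment estimate, the key point is that a direct covariance computation, using $\mathbb{E}[W(t)W(s)]=\min(t,s)=\int_0^1\mathbb{1}_{u<t}\,\mathbb{1}_{u<s}\,du$, shows that for $t_1,\dots,t_p,s_1,\dots,s_p\in[0,1]$ the variable $\sum_a W(t_a)-\sum_b W(s_b)$ is a centered Gaussian with variance $\int_0^1 f_{\mathbf t,\mathbf s}(u)^2\,du$, where $f_{\mathbf t,\mathbf s}(u):=\#\{a:t_a>u\}-\#\{b:s_b>u\}$ is \emph{integer-valued}. Hence, expanding $\mathbb{E}|I(\xi)|^{2p}=\int_{[0,1]^{2p}}\mathbb{E}\big[e^{i\xi(\sum_a W(t_a)-\sum_b W(s_b))}\big]\,d\mathbf t\,d\mathbf s$ and using $f^2\ge|f|$ together with the one-dimensional optimal-transport identity
$$ \int_0^1 \big|f_{\mathbf t,\mathbf s}(u)\big|\,du\ =\ \int_0^1 \big|\#\{b:s_b\le u\}-\#\{a:t_a\le u\}\big|\,du\ =\ \min_{\pi}\sum_{a=1}^p|t_a-s_{\pi(a)}|, $$
with $\pi$ ranging over permutations of $\{1,\dots,p\}$, one gets $\exp(-\tfrac{\xi^2}{2}\|f_{\mathbf t,\mathbf s}\|_{L^2}^2)\le\sum_\pi\exp(-\tfrac{\xi^2}{2}\sum_a|t_a-s_{\pi(a)}|)$; integrating each term as a product over $[0,1]^2$ yields
$$ \mathbb{E}|I(\xi)|^{2p}\ \le\ p!\Big(\int_{[0,1]^2}e^{-\xi^2|t-s|/2}\,dt\,ds\Big)^p\ \le\ p!\,(4\xi^{-2})^p. $$
(The case $p=1$ gives $\mathbb{E}|I(\xi)|^2\lesssim\xi^{-2}$, and the reverse inequality is elementary, so $\mathbb{E}|I(\xi)|^2\asymp\xi^{-2}$; this is what fixes the decay rate at $|\xi|^{-1}$.)

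Since $p!\le p^p$, this reads $\mathbb{E}|\xi I(\xi)|^{2p}\le(4p)^p$, and Markov's inequality optimized over $p$ (with $p\asymp\lambda^2$) gives an absolute $c>0$ with $\mathbb{P}\big(|\xi I(\xi)|>\lambda\big)\le e^{-c\lambda^2}$ for all $\xi\ge 1$ and $\lambda\ge 1$. Now fix $n$ and the block $\xi\in[2^n,2^{n+1}]$. From $|I(\xi)-I(\xi')|\le|\xi-\xi'|\sup_{[0,1]}|W|$ and the almost sure finiteness of $\sup_{[0,1]}|W|$ (the paths of $W$ being continuous), the map $\xi\mapsto\xi I(\xi)$ is Lipschitz on the block with an a.s.\ finite random constant, so it is enough to control it on a net of $O(2^{3n})$ frequencies at spacing $2^{-2n}$, the net error being $o(1)$ as $n\to\infty$. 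A union bound over this net with threshold $\lambda=C\sqrt n$ gives $\mathbb{P}\big(\sup_{\xi\in[2^n,2^{n+1}]}|\xi I(\xi)|>C\sqrt n+o(1)\big)\le 2^{3n}e^{-cC^2n}$, which is summable in $n$ once $C$ is large enough; Borel--Cantelli then yields $\sup_{\xi\in[2^n,2^{n+1}]}|\xi I(\xi)|\lesssim\sqrt n$ almost surely for all large $n$, i.e.\ $|I(\xi)|\lesssim|\xi|^{-1}\sqrt{\ln|\xi|}$ for all large $|\xi|$ (the bounded range being absorbed into the constant via $|I(\xi)|\le 1$).

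The main obstacle — and the only substantial step — is the factorial growth $p!$ in the moment bound, equivalently the fact that $\xi I(\xi)$ is genuinely sub-Gaussian rather than merely sub-exponential. A coarser estimate of the Gaussian exponent $\int_0^1 f_{\mathbf t,\mathbf s}^2$ that does not exploit its interpretation as a transport cost between the point configurations $\{t_a\}$ and $\{s_b\}$ typically produces growth of order $(Cp^2)^p$, hence only $\mathbb{P}(|\xi I(\xi)|>\lambda)\lesssim e^{-c\lambda}$; this would force the threshold $\lambda\asymp n$ in the Borel--Cantelli step and degrade the conclusion to the weaker bound $|I(\xi)|\lesssim|\xi|^{-1}\ln|\xi|$. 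The elementary gain is exactly the pointwise inequality $f_{\mathbf t,\mathbf s}^2\ge|f_{\mathbf t,\mathbf s}|$, valid because $f_{\mathbf t,\mathbf s}$ takes integer values; it turns the Gaussian variance into an $L^1$ transport distance and thereby produces a product structure after summing over matchings.
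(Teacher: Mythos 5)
Your proof is correct: the even-moment bound $\mathbb{E}|I(\xi)|^{2p}\le p!\,(4\xi^{-2})^p$ (obtained from the integer-valuedness of $f_{\mathbf{t},\mathbf{s}}$, the pointwise inequality $f^2\ge|f|$, and the one-dimensional matching identity, then summing over permutations), the resulting sub-Gaussian tail for $\xi I(\xi)$, and the dyadic-net plus Borel--Cantelli step producing the $\sqrt{\ln|\xi|}$ factor are all sound, up to harmless bookkeeping (treat the random net error separately from the probabilistic bound rather than writing a random $o(1)$ inside the probability, and absorb the range of small $|\xi|$ into the constant). The paper offers no proof of this proposition, only the citation to Kahane, and your argument is essentially the classical moment-method proof given there, so the approaches coincide.
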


This proposition is very inspiring: it suggest that a form of Van Der Corput Lemma should hold in some generic sense for some genuinely \say{fractal} phases. More precisely, the lack of regularity of the Brownian motion is compensated by its statistical autosimilarity (\cite{Ka85}, Th. 1 p. 234): it is known that, for any $c>0$, the process
$$ t \mapsto \sqrt{c} \ W(t/c) $$
also defines a Brownian motion. Moreover, for any $a>0$, the following scaled increments all follow the same gaussian law: $$ \frac{W(t+a) - W(t) }{\sqrt{a}}\sim \mathcal{N}(0,1) .$$   Those properties of the Brownian motion tell us that $W(t)$ behaves in the same way in every scale, which allows us to easily \say{zoom in} in the proofs as $\xi$ grows. \\

It is thus natural for us to search for a deterministic canditate in the realm of \say{fractal} functions. In the case of the Brownian motion, the property $ \sqrt{c} \ W(t/c) \sim W(t)$ can be formally rewritten as a form of conjugacy: in a sense, $W$ acts like a conjugacy between $x \mapsto cx$ and $x \mapsto \sqrt{c}x$. This may be a hint for us to consider conjugacies of dynamical systems as good candidates for phases.

\subsection{Our deterministic setting}

Our explicit family of autosimilar phases $\psi:[0,1] \rightarrow \mathbb{R}$ will be constructed as conjugacies between the doubling map and some perturbation. We denote by $\mathbb{S}^1 := \mathbb{R}/\mathbb{Z}$ the circle. Define the doubling map $f_0 : \mathbb{S} \rightarrow \mathbb{S}$ by $f_0(x) := 2x$.
 To state the main result, we need to recall a useful fact on perturbations of expanding maps.

\begin{proposition}[\cite{KH95}, Th.19.1.2 and Th.18.2.1]

Let $0<\alpha<1$. Then there exists $\delta>0$ such that the following holds. Let $f:\mathbb{S}^1 \rightarrow \mathbb{S}^1$ be a $C^{1+\alpha}$ $\delta$-perturbation of the doubling map $f_0: x \in  \mathbb{S}^1 \mapsto 2x \in  \mathbb{S}^1$, meaning that
$$ \|f-f_0\|_{C^{1+\alpha}} < \delta .$$

Then, reducing $\alpha$ if necessary, there exists a $\alpha$-Holder conjugacy $\psi : ( \mathbb{S}^1,f_0) \rightarrow ( \mathbb{S}^1,f)$. In other words, $\psi : \mathbb{S}^1 \rightarrow \mathbb{S}^1$ is a homeomorphism, Hölder with Hölder inverse, and $\psi \circ f_0 = f \circ \psi$. Generically, $\psi$ is not absolutely continuous (meaning that its derivative in the sense of distributions is not in $L^1(\mathbb{S})$). 

\end{proposition}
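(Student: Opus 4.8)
The plan is to take the existence of the conjugacy from the cited structural‑stability theorems of \cite{KH95}, supplement it with an elementary distortion computation to control the Hölder exponent, and invoke the rigidity theory of expanding maps for the genericity of non‑absolute‑continuity.

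\emph{Existence and bi‑Hölder regularity.} The doubling map $f_0$ is a $C^\infty$ expanding endomorphism of $\mathbb{S}^1$ of degree $2$ ($f_0'\equiv2$), and ``$\inf|f'|>1$'' is a $C^1$‑open condition, so for $\delta$ small enough every $C^{1+\alpha}$ $\delta$‑perturbation $f$ is still expanding of degree $2$; write $1<\lambda_-\le|f'|\le\lambda_+$ with $\lambda_\pm\to2$ as $\delta\to0$. Theorems 18.2.1 and 19.1.2 of \cite{KH95} provide the homeomorphic conjugacy $\psi$: concretely $\psi$ is the unique fixed point, in the uniform norm, of $h\mapsto F^{-1}\circ h\circ\tilde f_0$ on lifts $h=\mathrm{id}+(\text{$1$-periodic})$, where $F$ is an increasing lift of $f$ and $\tilde f_0(x)=2x$; this is a contraction since $|(F^{-1})'|\le\lambda_-^{-1}<1$, and the fixed point is automatically monotone and onto, hence descends to a circle homeomorphism with $\psi\circ f_0=f\circ\psi$. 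For the Hölder exponent I would use bounded distortion for $C^{1+\alpha}$ expanding maps (the Rényi--Koebe estimate, available because $\log|f'|$ is $\alpha$-Hölder): a level‑$n$ cylinder for $f_0$ has length $2^{-n}$, while its image under $\psi$ --- a level‑$n$ cylinder for $f$ --- has length between $c\lambda_+^{-n}$ and $C\lambda_-^{-n}$, uniformly in $n$. Since two points at distance $\le2^{-n}$ lie in one or two adjacent level‑$n$ cylinders of $f_0$, this gives that $\psi$ is $\beta$-Hölder with $\beta=\log\lambda_-/\log2$; running the same argument with $f_0$ and $f$ interchanged (legitimate as $f_0$ is also a small perturbation of $f$) gives $\psi^{-1}$ $\beta'$-Hölder with $\beta'=\log2/\log\lambda_+$. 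Both $\beta,\beta'\to1$ as $\delta\to0$, so once $\delta$ is small enough that $2^{\alpha}\le\lambda_-\le\lambda_+\le2^{1/\alpha}$ --- i.e.\ after ``reducing $\alpha$'' to $\min(\beta,\beta')$ --- $\psi$ is a bi‑$\alpha$-Hölder conjugacy.

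\emph{Genericity of non‑absolute‑continuity.} If $\psi$ is absolutely continuous, then differentiating $\psi\circ f_0=f\circ\psi$ yields $2\,\psi'(f_0x)=f'(\psi x)\,\psi'(x)$ for $m$-a.e.\ $x$ (with $m$ Lebesgue measure); since $f_0$ is $m$-ergodic and $\psi$ is non‑constant, this forces $\psi'>0$ a.e., hence $\psi^{-1}$ is absolutely continuous as well and $\mu_f:=\psi_*m\ll m$. But $\mu_f$ is $f$-invariant, and being the push‑forward of the measure of maximal entropy $m$ of $f_0$ under a topological conjugacy, it is the measure of maximal entropy of $f$. So absolute continuity of $\psi$ would make the measure of maximal entropy of $f$ absolutely continuous --- and for a $C^{1+\alpha}$ expanding map this is a rigid condition: it is classical (Shub--Sullivan and subsequent work on rigidity of expanding maps) that it holds if and only if $f$ is $C^{1+\alpha}$-conjugate to the linear model $f_0$, if and only if $(f^p)'(x)=2^{p}$ at every periodic point $x$ of period $p$. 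For each fixed $p$ these multiplier equalities cut out a closed set with empty interior in the $C^{1+\alpha}$ ball of perturbations, so $\{f:\psi\text{ is absolutely continuous}\}$ is contained in a countable union of closed nowhere‑dense sets, hence is meager; the generic $\delta$-perturbation $f$ therefore has a conjugacy $\psi$ whose distributional derivative is not in $L^1(\mathbb{S}^1)$.

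The first two steps are classical: once $\delta$ is fixed small they amount to quoting \cite{KH95} together with routine distortion bounds. The point that requires genuine care is the rigidity chain in the last step --- in essence, the implication that an absolutely continuous conjugacy between $C^{1+\alpha}$ expanding maps is already smooth, so that the periodic‑multiplier obstruction applies --- which rests on the regularity theory of transfer operators for $C^{1+\alpha}$ expanding maps, and on fixing precisely what ``generic'' means (residual, or prevalent) in the space of admissible perturbations.
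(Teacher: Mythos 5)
Your argument is correct and follows essentially the same route as the paper: existence and bi-H\"older regularity are taken from the cited structural-stability theorems of \cite{KH95} (with standard expansion/distortion bounds for the exponent), and the genericity of non-absolute continuity rests on the Shub--Sullivan rigidity result \cite{SS85}, exactly as in the paper's one-line justification. The only cosmetic difference is that you pass through the measure of maximal entropy and all periodic multipliers to get a meager exceptional set, whereas the paper invokes \cite{SS85} directly (an absolutely continuous conjugacy is automatically $C^{1+\alpha}$) and then uses the single non-generic condition $f'(\psi(0))=2$ at the fixed point.
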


The fact that $\psi$ is generically not absolutely continuous is because any such conjugacy must be $C^{1+\alpha}$ (see \cite{SS85}), which is not allowed if $f'(\psi(0)) \neq 2$, for example. (The derivative is taken in the sense that $f$ can be identified with an increasing, 1-periodic and $C^{1+\alpha}$ function $\mathbb{R} \rightarrow \mathbb{R}$.) We are ready to state our main theorem.

\begin{theorem}

Let $f_0:\mathbb{S}^1 \longrightarrow \mathbb{S}^1$ be the doubling map. Let $f$ be a $C^{2+\alpha}$ $\delta-$perturbation of $f_0$. Let $\psi : ( \mathbb{S}^1,f_0) \rightarrow ( \mathbb{S}^1,f)$ be the $\alpha$-Hölder conjugacy. Then, there exists $C>0$ and $\rho>0$ such that:

$$ \forall \xi \in \mathbb{R}^*, \ \left| \int_0^1 e^{i \xi \psi(x)} dx \right| \leq C |\xi|^{-\rho} .$$

\end{theorem}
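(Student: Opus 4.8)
The plan is to exploit the self-similarity coming from the conjugacy relation $\psi \circ f_0 = f \circ \psi$ to reduce the decay of the oscillatory integral to a non-concentration property of the pushforward measure $\mu := \psi_* (\mathrm{Leb}_{[0,1]})$, and then to invoke the Sahlsten--Steven machinery (via the Bourgain sum-product phenomenon) which converts polynomial Fourier decay for a measure into the required bound as soon as $\mu$ satisfies suitable \emph{non-concentration} estimates at all scales. Concretely, $\int_0^1 e^{i\xi \psi(x)}\,dx = \widehat{\mu}(\xi)$, so the theorem is exactly the statement that $\mu$ has polynomial Fourier decay. The self-similarity is the key input: because $f$ is $C^{2+\alpha}$-close to the doubling map, the two inverse branches $g_0, g_1$ of $f$ form a $C^{2+\alpha}$ iterated function system whose attractor is all of $\mathbb{S}^1$, and the conjugacy intertwines this IFS with the dyadic IFS $x \mapsto x/2$, $x \mapsto (x+1)/2$. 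Hence $\mu$ is the stationary measure of the IFS $\{g_0, g_1\}$ with equal weights $1/2$, and it inherits an exact scaling structure: restricted to a cylinder $g_{i_1} \circ \cdots \circ g_{i_n}(\mathbb{S}^1)$, $\mu$ is (up to the smooth distortion of the branch composition) a rescaled copy of $\mu$ itself, with total mass $2^{-n}$.

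The first step is to set up this symbolic/geometric dictionary carefully: define the cylinder sets $I_w$ for finite words $w$, record that $\mu(I_w) = 2^{-|w|}$, and use bounded distortion (available because $f \in C^{2+\alpha}$, so $\log |g_i'|$ is $C^{1+\alpha}$ and the usual Koebe-type distortion bounds apply uniformly over all word lengths) to control the ratios of diameters of nested cylinders. The second step, following the elementary method of \cite{BD17}, is to verify the \emph{non-concentration estimate}: there exist $C, \kappa > 0$ such that for every $r \in (0,1)$ and every interval $J$ of length $r$,
$$ \mu(J) \leq C r^{\kappa}, $$
and moreover a projective/multiscale strengthening of this — that $\mu$ does not concentrate near any single point at \emph{any} resolution, which is what the sum-product input actually demands (a statement about $\mu$ restricted and rescaled to dyadic-type scales not being close to an atom). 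This follows from the cylinder mass formula $\mu(I_w) = 2^{-|w|}$ together with the bounded distortion: a short interval can meet only boundedly many cylinders of the appropriate generation, each carrying exponentially small mass, which forces the Frostman-type bound with $\kappa$ governed by the distortion constants (and hence by $\|f - f_0\|_{C^{2+\alpha}}$ and $\alpha$). The third step is to feed this into the theorem of Sahlsten--Steven \cite{SS20}: a measure on $\mathbb{R}$ (or $\mathbb{S}^1$) that is self-similar under a $C^{1+\alpha}$ (here $C^{2+\alpha}$) IFS and satisfies the non-concentration estimates has $\widehat{\mu}(\xi) = O(|\xi|^{-\rho})$ for some $\rho > 0$; transporting back, $\rho$ depends only on $\kappa$ and the contraction data, which is how we "gain additional control over the decay rate."

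The main obstacle I expect is the second step — verifying the non-concentration estimates in precisely the form the sum-product phenomenon requires. The naive Frostman bound $\mu(J) \leq C|J|^\kappa$ is not quite enough; Bourgain's theorem needs the measure to be non-concentrated \emph{after rescaling to every scale}, i.e. one must show that for a generation-$n$ cylinder, the renormalized measure $\mu_w := \big((\text{affine rescaling of } I_w)\big)_* (\mu|_{I_w})/\mu(I_w)$ is uniformly (in $n$ and $w$) non-atomic at scale $1$, with quantitative bounds. This is where the bounded distortion of the $C^{2+\alpha}$ IFS does the real work: it guarantees that all these renormalizations are uniformly comparable to $\mu$ itself, so a single non-concentration estimate for $\mu$ propagates to all of them. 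A secondary technical point is the transition from the circle $\mathbb{S}^1$ to the interval $[0,1]$ and the handling of $\xi < 0$, but this is routine (complex conjugation handles the sign, and the conjugacy can be lifted to $\mathbb{R}$). The smoothness hypothesis $C^{2+\alpha}$ rather than merely $C^{1+\alpha}$ is presumably needed so that $\log|f'|$ is $C^{1+\alpha}$, giving the strong distortion control that makes the non-concentration estimates clean and the decay exponent explicit.
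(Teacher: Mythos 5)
There is a genuine gap, and it lies exactly where you flagged your ``main obstacle'': you have misidentified which non-concentration estimate the sum-product machinery needs. The hypothesis $(*)$ of Bourgain's theorem (Theorem 4.1 in the paper), and likewise the hypothesis of Sahlsten--Steven \cite{SS20}, is a non-concentration statement about the \emph{normalized derivatives of the inverse branches}, i.e.\ about the quantities $\zeta_{\mathbf{A},j}(\mathbf{b}) = 4^{n} g_{\mathbf{a}_{j-1}\mathbf{b}}'(x_{\mathbf{a}_j})$: for most blocks, the values $\zeta_{\mathbf{A},j}(\mathbf{b})$, $\mathbf{b}\in\{0,1\}^n$, must not cluster at exponentially small scales. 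This is a \emph{non-linearity} estimate on $f$, not a Frostman-type regularity estimate on $\mu$. The Frostman bound $\mu(J)\leq C|J|^{\delta_\mu}$ that you propose to verify (the paper's Lemma 2.2) holds for \emph{every} perturbation $f$, linear or not, and in the paper it is only used to discard a neighbourhood of the diagonal after the Cauchy--Schwarz step; it cannot by itself drive the sum-product mechanism. Indeed, if $f$ fails the total non-linearity condition, i.e.\ $\ln f' = \theta\circ f - \theta + \kappa$ with $\theta$ Lipschitz and $\kappa$ locally constant, then the $\zeta_{\mathbf{A},j}$ concentrate (all values coincide up to exponentially small errors), the hypothesis of \cite{SS20} and of Theorem 4.1 fails, and your argument collapses even though your Frostman bound is still true. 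The self-similar-measure analogy makes the point sharply: the measure of maximal entropy of a piecewise \emph{linear} full-branched map is a Bernoulli-convolution-type measure with equal weights, it satisfies the same Frostman bound and the same exact scaling structure, yet for Pisot-type parameters its Fourier transform does not decay at all. So ``bounded distortion + Frostman + self-similarity'' is not the mechanism; genuine nonlinearity of the branches is.

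The paper's proof of Theorem 1.4 is therefore a dichotomy that your proposal does not reproduce. If $f$ satisfies (TNL), the result is quoted from \cite{SS20}, whose hypothesis is precisely total non-linearity (checked there via Dolgopyat-type estimates, and in the paper's explicit family by the elementary argument of Sections 3--5 inspired by \cite{BD17}). If $f$ fails (TNL), the cohomological identity forces $\kappa$ to be constant (using the fixed point $\psi(0)$), hence $\mu$ coincides with the SRB measure, which is absolutely continuous with $C^1$ density, and the decay follows by a completely different (smooth) argument -- this is Lemma 1.8. To repair your proposal you would need to add this case distinction and, in the non-linear case, replace your step two by a verification (or citation) of the non-concentration of the $\zeta_{\mathbf{A},j}$'s, which is also where the $C^{2+\alpha}$ hypothesis and the Dolgopyat-type technology actually enter; your claim that the exponent $\rho$ then ``depends only on $\kappa$ and the contraction data'' is likewise unsupported, since the uniformity of $\rho$ in the paper is obtained only for the explicit family $f_\delta$ of Theorem 1.6.
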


We can re-write Theorem 1.4 in a measure-theoretic form.

\begin{definition}

Denote by $\mu := \psi_* (dx) $ the pushforward of the lebesgue measure by $\psi$, so that
    $$ \int_0^1 e^{i \xi \psi(x)} dx = \int_0^1 e^{i \xi x} d\mu(x)$$
    
is the Fourier transform $\widehat{\mu}(\xi)$ of the measure $\mu$. This probability measure is $f$-invariant on $\mathbb{S}^1$, since the Lebesgue measure is $f_0$-invariant on the circle.

\end{definition}

\begin{remark}
The measure $\mu$ is known as the \emph{measure of maximal entropy} for the dynamical system $(\mathbb{S},f)$, as it is the push-forward via $\psi$ of the Lebesgue measure $\lambda$, which is the measure of maximal entropy for the doubling map. Indeed, the topological entropy $h(f_0)$ of $(\mathbb{S},f_0)$ is $\ln 2$ (see \cite{BS02} section 2.5) and the measure-theoretic entropy $h_\lambda(f_0)$ is also $\ln 2$ (see \cite{BS02}, section 9.4, and  Th. 9.5.4).
\end{remark}

To prove this result, we separate two cases: one where $f$ satisfies a linearity condition, and one where $f$ is \emph{totally non linear.} The \emph{total nonlinearity condition} (TNL) is defined as follows.

\begin{definition}
We say that $f$ satisfies (TNL) if there exists no Lipschitz map $\theta: \mathbb{S}\setminus\{\psi(0),\psi(1/2)\} \rightarrow \mathbb{R}$ and no locally constant map $\kappa: \mathbb{S}\setminus\{\psi(0),\psi(1/2)\} \rightarrow \mathbb{R}$  such that, on $\mathbb{S}\setminus\{\psi(0),\psi(1/2)\}$, $$ \ln f' = \theta \circ f - \theta + \kappa .$$
\end{definition}

\begin{lemma}

If $f$ doesn't satisfies $(TNL)$, then Theorem 1.4 holds.

\end{lemma}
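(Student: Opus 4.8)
The plan is to show that when $f$ fails (TNL), the conjugacy $\psi$ is secretly smooth — in fact $C^{1+\alpha}$, or at least absolutely continuous with a well-behaved density — and then invoke the classical Van Der Corput Lemma 1.1. The cohomological equation $\ln f' = \theta\circ f - \theta + \kappa$ on $\mathbb{S}\setminus\{\psi(0),\psi(1/2)\}$ is the standard "linearity" obstruction one meets in the theory of expanding maps: it says that the geometric potential $\ln f'$ is cohomologous to a constant (the constant being $\ln 2 = h_{\text{top}}(f)$ on each branch, forced by the fact that $\mu$ is the measure of maximal entropy and by the Livšic-type normalization). By a Livšic / transfer-operator argument, the existence of such a coboundary representation means that the SRB measure and the measure of maximal entropy coincide, which for an expanding circle map is equivalent to the conjugacy $\psi$ being a diffeomorphism onto its image up to the finitely many branch points $\psi(0),\psi(1/2)$.

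Concretely, here are the steps in order. First I would recall that $f$, being a $C^{2+\alpha}$ perturbation of the doubling map, is a $C^{2+\alpha}$ expanding map of degree $2$; its unique measure of maximal entropy is $\mu=\psi_*(dx)$, and its unique absolutely continuous invariant measure $\nu$ has a $C^{1+\alpha}$ positive density $h$ with respect to Lebesgue, obtained as the fixed point of the normalized transfer operator. Second, I would translate the coboundary hypothesis: the condition $\ln f' = \theta\circ f - \theta + \kappa$ with $\kappa$ locally constant implies, by summing along periodic orbits (Livšic), that for every $f$-periodic point $p$ of period $n$ the multiplier satisfies $(f^n)'(p) = e^{n c}$ with $c = \ln 2$ forced by entropy considerations; equivalently the Birkhoff sums of $\ln f' - \ln 2$ over periodic orbits all vanish. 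Third, by the Livšic theorem (applicable since $f$ is a $C^{2+\alpha}$ expanding map and $\ln f'$ is Hölder), this forces $\ln f' - \ln 2$ to be a genuine Hölder (indeed $C^{1+\alpha}$) coboundary $u\circ f - u$ globally — but then the density $h$ of $\nu$ and the Jacobian match so that $\nu = \mu$, i.e. $\mu$ is absolutely continuous with $C^{1+\alpha}$ density. Finally, $\mu = \psi_*(dx)$ absolutely continuous with smooth positive density means $\psi$ is a $C^{1+\alpha}$ diffeomorphism away from $\{0,1/2\}$, so $\psi$ is piecewise $C^2$ with bounded, bounded-away-from-zero derivative; splitting $\int_0^1 e^{i\xi\psi(x)}dx$ into the two monotone pieces and applying Lemma 1.1 with $k=1$ (van der Corput for monotone $C^1$ phases with derivative bounded below, i.e. the first-derivative test) yields decay $|\xi|^{-1}$, which is far stronger than the claimed $|\xi|^{-\rho}$.

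The main obstacle I anticipate is the bookkeeping around the two branch points $\psi(0),\psi(1/2)$: the coboundary $\theta$ is only defined on $\mathbb{S}\setminus\{\psi(0),\psi(1/2)\}$ and need not extend continuously across them, so the "$\psi$ is globally smooth" conclusion is too strong — what is really true is that $\psi$ is $C^{1+\alpha}$ on each of the two arcs between the branch points, with one-sided derivatives that may jump. One must check that this piecewise smoothness, with derivative bounded away from $0$ and $\infty$ on each piece (which follows because $\theta$ is Lipschitz, hence $e^{\pm\theta}$ is bounded on each arc), is enough: it is, since van der Corput's first-derivative estimate only needs monotonicity and a lower bound on $|\psi'|$ on each piece, and the finitely many jump discontinuities of $\psi'$ contribute only boundary terms of size $O(|\xi|^{-1})$ after integration by parts. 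A secondary technical point is justifying that the constant $\kappa$ must equal $\ln 2$ on each branch — this is exactly where one uses that $\mu$, being $\psi_*(dx)$, is the measure of maximal entropy and that $h_\mu(f) = \int \ln|f'|\,d\mu$ would fail unless the potential is normalized correctly; alternatively one observes directly that $\psi$ conjugates $f_0$ to $f$ so $(f^n)'(\psi(x)) \cdot$ (telescoping) is pinned down along the dyadic structure. Once these points are handled, the reduction to Lemma 1.1 is immediate.
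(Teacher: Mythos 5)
Your overall route is the same as the paper's: failure of (TNL) gives $\ln f' = \theta\circ f-\theta+\kappa$, one upgrades this to ``cohomologous to a constant'', concludes that the measure of maximal entropy $\mu=\psi_*(dx)$ coincides with the absolutely continuous (SRB) invariant measure, whose density is $C^1$, and power decay follows. (The paper stops at the level of the measure; your extra translation into smoothness of $\psi$, splitting into monotone pieces and invoking a first-derivative van der Corput bound, is superfluous though harmless: once $\mu$ has a $C^1$ density $h$ on the circle, a single integration by parts in $\int e^{i\xi y}h(y)\,dy$ gives $O(|\xi|^{-1})$ with no boundary terms, so the branch-point bookkeeping you worry about never arises.)

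The genuine gap is the passage from the \emph{locally constant} $\kappa$ of the (TNL) definition to a \emph{constant} $\kappa$. This is not a normalization afterthought: if $\kappa$ took two different values $\kappa_0\neq\kappa_1$ on the two arcs, then $-\ln f'$ would be cohomologous to a non-constant locally constant potential, whose equilibrium state is a Bernoulli/Markov measure with unequal weights; that measure is still the SRB measure, but it is then mutually singular with the measure of maximal entropy, and the whole conclusion breaks down. So one must actually prove $\kappa_0=\kappa_1$, and neither of your suggested justifications does this. The periodic-orbit (Livšic) computation only yields $\ln(f^n)'(p)=m\kappa_0+(n-m)\kappa_1$ with $m$ depending on the itinerary, so asserting a single multiplier $e^{nc}$ already presupposes $\kappa$ constant; and the entropy argument via $h_\mu(f)=\int\ln f'\,d\mu$ is Pesin's formula, valid for the absolutely continuous measure --- i.e.\ it assumes exactly what you are trying to prove about $\mu$ (integrating the cohomological identity against $\mu$ only gives the average $(\kappa_0+\kappa_1)/2$, since each arc has $\mu$-measure $1/2$). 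The paper closes this gap in one line: $\psi(0)$ is a fixed point of $f$ lying on the boundary of both arcs, so letting $x\to\psi(0)^{\pm}$ in the identity makes $\theta\circ f-\theta$ telescope to $0$ and forces both one-sided values of $\kappa$ to equal $\ln f'(\psi(0))$; hence $\kappa$ is constant and the standard equilibrium-state result (Parry--Pollicott, Baladi) identifying the MME with the SRB measure applies. With that step supplied, the rest of your plan goes through; note also that your intermediate claim that Livšic upgrades $\theta$ to a global $C^{1+\alpha}$ coboundary across the two excluded points is unnecessary (and not obviously true), since the identification of the two measures only needs the cohomology off a finite set.
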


\begin{proof}

Suppose that there exists a Lipschitz map $\theta: \mathbb{S}\setminus\{\psi(0),\psi(1/2)\} \rightarrow \mathbb{R}$ and a locally constant map $\kappa: \mathbb{S}\setminus\{\psi(0),\psi(1/2)\} \rightarrow \mathbb{R}$  such that $$ \ln f' = \theta \circ f - \theta + \kappa .$$
 on $\mathbb{S}\setminus\{\psi(0),\psi(1/2)\}$. Since we are working with the doubling map $f_0$, $\psi(0)$ is a fixed point of $f$ that bounds our two intervals, and this implies that $\kappa$ is constant. Indeed, we see that
 $$ \ln f'(\psi(0^+)) = \theta( f(\psi(0^+)) ) -  \theta( \psi(0^+) ) + \kappa(\psi(0^+)) =  \kappa(\psi(0^+)) $$
 and, similarly, $\ln f'(\psi(0^-)) = \kappa(\psi(0^-))$. Hence $ \ln f' = \theta \circ f - \theta + \kappa $ for some constant $\kappa$. We say that $f$ is \emph{cohomologous} to a constant. It is then well known (\cite{PP90}, Th 3.6, and \cite{Ba18}, Th 2.2) that $\mu$, the measure of maximal entropy for $(\mathbb{S},f)$, is equal to the \emph{SRB measure}, that is, the only invariant probability measure that is absolutely continuous with respect to the Lebesgue measure. Moreover, in our case, the density of the $SRB$ measure is $C^1$, which is enough to ensure power decay for $\widehat{\mu}$. \end{proof}

The totally nonlinear case directly follows from the work of Sahlsten and Steven \cite{SS20}, dealing with the power decay of the Fourier transform of equilibrium states for one dimensional expanding maps. The technique used is a generalization of some previous work of Bourgain-Dyatlov \cite{BD17}, Li-Naud-Pan \cite{LNP19}, and also Jordan-Sahlsten \cite{JS16}. The proof is fairly technical in the general case, and our goal is to provide a self-contained and elementary proof for an explicit family of perturbations. More precisely, for $x \in [0,1)$, let $\Phi(x) := (x - 1/4) \mathbb{1}_{[0,1/2[}(x) + (3/4-x) \mathbb{1}_{[1/2,1[}(x) $. Extend $\Phi$ to a 1-periodic function, and then set, for $\delta>0$ small enough:
$$ f_\delta(x) := z_\delta \int_0^{2x} e^{\delta \Phi(t)} dt $$
where $\delta>0$ is a normalization factor chosen so that $f_\delta(1/2)=1$. This maps factors into a perturbation of the doubling map on the circle. Denote the associated conjugacy $\Psi_\delta$. We prove the following.

\begin{theorem}
There exists $C > 0$ and $\rho>0$ such that, if $\delta>0$ is small enough:

$$\forall |\xi| \geq 1, \ \left| \int_0^1 e^{i \xi \psi_\delta(x)} dx \right| \leq C \delta^{-1} |\xi|^{- \rho} .$$

\end{theorem}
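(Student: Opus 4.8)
The plan is to establish the "non-concentration" input needed to invoke the sum–product machinery of Sahlsten–Steven \cite{SS20}, but to do so by hand for the explicit family $f_\delta$. Recall that the measure $\mu_\delta := (\psi_\delta)_*(dx)$ is the measure of maximal entropy for $(\mathbb{S}^1, f_\delta)$, and $\widehat{\mu_\delta}(\xi) = \int_0^1 e^{i\xi\psi_\delta(x)}\,dx$. The doubling map has a natural coding by binary words, and under $\psi_\delta$ the cylinder $[a_1\dots a_n]$ of the doubling map maps to an interval $I_{a_1\dots a_n}$ whose length is governed by the inverse branches of $f_\delta$; because $\mu_\delta$ has uniform mass $2^{-n}$ on each level-$n$ cylinder, one gets $\mu_\delta$-a.e. local dimension controlled by the Lyapunov exponent $\int \ln f_\delta'\,d\mu_\delta$, which is $\ln 2 + O(\delta)$. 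The first step is therefore to set up this symbolic/geometric dictionary and to record the bounded-distortion estimates for $f_\delta$ (available since $f_\delta$ is $C^{2+\alpha}$ with a genuinely computable derivative $z_\delta\, 2 e^{\delta\Phi}$), showing in particular that $\mu_\delta$ is Ahlfors-regular-ish: $\mu_\delta(B(x,r)) \lesssim r^{s_\delta}$ with $s_\delta = 1 - O(\delta)$, uniformly.

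Second, I would verify total nonlinearity (TNL) for $f_\delta$ when $\delta > 0$: by the dichotomy of Lemma 1.8 (and its proof), the only obstruction is that $\ln f_\delta'$ be cohomologous to a constant, i.e. $\ln f_\delta' = \theta\circ f_\delta - \theta + \kappa$ with $\theta$ Lipschitz. Livšic-type periodic-orbit considerations show this forces the multipliers of all periodic orbits to be determined by the cohomology class; one computes two short periodic orbits of $f_\delta$ (e.g. the fixed point and a period-2 orbit) and checks their multipliers $\prod f_\delta'$ are incompatible with a cohomology relation unless $\delta = 0$. Since $\Phi$ is piecewise affine and explicit, this is an elementary finite computation. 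Hence for $\delta \neq 0$ the totally nonlinear case of Theorem 1.4 applies in principle — but the whole point is to reprove it elementarily, so what I actually do is feed the explicit non-concentration estimates into the sum–product phenomenon.

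Third — and this is the heart — I would carry out the "discretized sum–product / non-concentration" argument in the style of \cite{BD17}. The key analytic quantity is, for a frequency scale $\xi$ with $|\xi| \sim \rho_0^{-N}$, the transfer-operator / tree decomposition of $\widehat{\mu_\delta}(\xi)$ into sums over level-$k$ cylinders of oscillatory integrals at a rescaled frequency; the gain comes from showing that the phases $\psi_\delta$ restricted to consecutive cylinders are not trapped in arithmetic progressions, i.e. the derivatives of the inverse branches (equivalently the "renormalized phases") satisfy a non-concentration bound: the proportion of cylinders on which the phase is within $\epsilon$ of an affine function is $\le C\epsilon^{\beta}$ for some $\beta = \beta(\delta) > 0$. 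Here is where the explicit $\Phi$ pays off: because $\Phi$ is piecewise linear with a single slope change of size $O(\delta)$ per branch, the second derivative of iterated branches (the "curvature" driving non-concentration) is $\asymp \delta$ on a definite proportion of cylinders, uniformly across scales thanks to autosimilarity. Tracking $\delta$ through the bounds gives the stated $C\delta^{-1}$ prefactor: the curvature being of order $\delta$ means one must go to scale $\sim \delta^{-1}$ before the oscillation kicks in, costing one factor of $\delta^{-1}$, while $\rho > 0$ can be taken independent of $\delta$ (for $\delta$ small) since the combinatorial gain per renormalization step is bounded below.

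The main obstacle I anticipate is the last step: making the sum–product input genuinely self-contained rather than a black box. Concretely, one must (i) state a clean finitary sum–product / additive-energy estimate (Bourgain's, as used by Sahlsten–Steven), (ii) reduce $|\widehat{\mu_\delta}(\xi)|^2$, via a $TT^*$-style expansion over pairs of cylinders, to an incidence count to which (i) applies, and (iii) check the non-concentration hypotheses at every scale with constants uniform in $\delta$ — the uniformity is delicate because as $\delta \to 0$ the map degenerates to the linear doubling map, for which no decay holds, so all the $\delta$-dependence must be quarantined into the single prefactor $\delta^{-1}$ and not leak into $\rho$. Verifying that the bounded-distortion constants, the Ahlfors exponent $s_\delta$, and the curvature lower bound all behave like $1 - O(\delta)$, $\asymp \delta$ respectively, with no hidden blow-up, is the computational crux; everything else is a matter of assembling the standard renormalization scheme for the doubling map.
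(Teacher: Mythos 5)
Your overall architecture matches the paper's: symbolic coding and a Frostman bound for $\mu_\delta$, a Cauchy--Schwarz/$TT^*$ expansion over pairs of cylinders with linearization of the phase to extract a product structure, Bourgain's sum--product theorem, and a hands-on verification of its non-concentration hypothesis for the explicit $f_\delta$. (Your detour through (TNL) and periodic-orbit multipliers is unnecessary here, as you yourself note.) The genuine gap is at the step you call the heart. The hypothesis that must be verified is a pairwise, multi-scale counting statement: for most blocks $\mathbf{A}$ and all $\sigma$ in an exponentially small range $[e^{-4\varepsilon_0 n},e^{-\varepsilon_0\varepsilon_1 n/2}]$, the number of pairs of words $(\mathbf{b},\mathbf{c})$ with $|4^n g_{\mathbf{a}\mathbf{b}}'(x_{\mathbf{d}})-4^n g_{\mathbf{a}\mathbf{c}}'(x_{\mathbf{d}})|\leq\sigma$ is at most $4^n\sigma^{\gamma}$ with a \emph{fixed} exponent $\gamma$. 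Asserting that \say{the curvature of iterated branches is $\asymp\delta$ on a definite proportion of cylinders, uniformly across scales thanks to autosimilarity} does not yield this: curvature controls distortion of a single branch, but the issue is near-coincidence of derivatives across exponentially many \emph{distinct} pairs of words at scales as small as $e^{-cn}$, and no mechanism is given for counting these coincidences. In the paper this is exactly where the specific choice of $\Phi$ enters: since $\Phi'=\pm 1$ according to the branch, the derivative $(S_n\Phi\circ g_{\mathbf{b}})'(x)$ is a signed sum $\sum_j \widehat{\Phi}(b_j)\,\kappa^j_{\mathbf{b}}(x)$ of contraction factors $\kappa^j\approx 2^{-j}$, and the needed bound is an anti-concentration estimate for such Bernoulli-convolution-type sums, proved by a two-step induction (at each stage at most three of the four sign choices can reach a $\sigma$-interval, giving the exponent $\alpha_0=1-\ln 3/\ln 4$). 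This is then combined with the Bourgain--Dyatlov trick of using a lower bound on the derivative of the Birkhoff-sum difference to spread the points $x_{\mathbf{d}}$, the conversion from multipliers to Birkhoff sums via a logarithm (which is where the single factor $\delta^{-1}$ actually arises, since $\ln(4^ng'_{\mathbf{a}\mathbf{b}})-\ln(4^ng'_{\mathbf{a}\mathbf{c}})=\delta\,(S_{2n}\Phi\circ g_{\mathbf{a}\mathbf{b}}-S_{2n}\Phi\circ g_{\mathbf{a}\mathbf{c}})$ up to constants), and a dyadic decomposition plus Markov inequality to pass from a per-scale bound to a single set of \say{regular} blocks good at all scales simultaneously. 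None of these ingredients appear in your sketch, so the non-concentration input to the sum--product theorem is not actually established.

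A secondary but important slip: you allow the non-concentration exponent to depend on $\delta$ (\say{$\le C\epsilon^{\beta}$ for some $\beta=\beta(\delta)>0$}). If $\beta$ degrades as $\delta\to 0$, then the sum--product parameters $k,\varepsilon_1$ and hence the final exponent $\rho$ degrade too, and you lose precisely the uniformity in $\delta$ that the theorem claims. The point of the paper's computation is that the exponent in the counting bound (e.g. $\gamma=1/100$) is absolute, with all $\delta$-dependence confined to a multiplicative prefactor of size $\delta^{-1}$; your argument would need to deliver that same structure, which again hinges on the signed-sum anti-concentration lemma rather than on a curvature heuristic.
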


It is worth noticing that our explicit approach allows to see that the exponent $\rho$ doesn't depends on $\delta$: this answers, in our particular case, a question found in \cite{SS20} about the dependence of this exponent on the dynamics. \\

The plan of the paper is the following. Section 2, 3 and 4 shows how one can reduce Theorem 1.6 to checking a \say{nonlinearity estimate} on $f$ for a general $C^{1+\text{Lip}}$ ($C^1$ with Lipschitz derivative) perturbation of the doubling map. In section 5, we see how one can check those estimates for the explicit perturbation defined above in an elementary way, inspired from \cite{BD17}. (In general, this last step uses some additional technology that requires us to work with $C^{2+\alpha}$ perturbations.)

\begin{remark}
It was pointed out to me by Frederic Naud that another example of Holder phase satisfying this \say{fractal Van der corput lemma} can be easily constructed. Consider the doubling map, but this time, seen as the map $z \mapsto z^2$ restricted to the unit circle $\mathbb{U} \subset \mathbb{C}$. If $c$ is a small enough complex number, then there exists a topological circle $J_c$ on which the dynamical system $z \mapsto z^2+c$ is well defined (\cite{Ly86}, section 1.16). The two dynamics are then conjugated by a quasiconformal mapping $\psi : \mathbb{U} \rightarrow J_c$ (which is Holder, \cite{FS58}). It follows from the author previous work $\cite{Le21}$ that there exists $\varepsilon>0$ and $C>0$ such that
$$ \forall z \in \mathbb{C},\ \left| \int_\mathbb{U} e^{i \text{Re}\left(z \psi(\theta)\right) } d\theta \right| \leq C (1+|z|)^{-\varepsilon} .$$

This remark was the motivation for this paper.

\end{remark}

\section{Preliminary facts}

We choose a $C^{1+\text{Lip}}$ $\delta$-perturbation $f$ of the doubling map $f_0 : x \mapsto 2x \mod 1$, for $\delta$ small enough. Denote the Holder conjugacy $\psi$. First of all, we define some inverse branches for the doubling map.

\begin{definition}

Define $S_0^{(0)} := [0,1/2)$ and $S_1^{(0)} := [1/2,1)$. This is a partition of $[0,1) \simeq \mathbb{S}^1 $ adapted to the doubling map. Define the associated inverse branches by:

$$ \begin{array}[t]{lrcl}
 g_0^{(0)}: & [0,1) & \longrightarrow & S_0^{(0)} \\
    & x  & \longmapsto &  x/2 \end{array}  \ , \quad \begin{array}[t]{lrcl}
 g_1^{(0)}: &  [0,1)  & \longrightarrow &  S_1^{(0)} \\
    & x  & \longmapsto &  (x+1)/2 \end{array}  $$
    
For any finite word $\mathbf{a} = a_1 \dots a_n \in \{0,1\}^n$, define 
$$ g_{\mathbf{a}}^{(0)} := g_{a_1}^{(0)} \dots g_{a_n}^{(0)} .$$
The cylinder set associated to the word $\mathbf{a}$ is defined by $ S_\mathbf{a}^{(0)} := g_{\mathbf{a}}^{(0)}(\mathbb{S}) $, and the collection  \\ $\{S_\mathbf{a}^{(0)} \ , \ \mathbf{a} \in \{0,1\}^n \}$ is a partition of $\mathbb{S}^1$.

\end{definition}

Recall that the Lebesgue measure is invariant by the doubling map. Moreover, for any measurable map $h : \mathbb{S}^1 \rightarrow \mathbb{C}$, we have the identity

$$ \int_{\mathbb{S}} h(x) dx = \frac{1}{2} \left( \int_{\mathbb{S}} h(g_0^{(0)}(x)) dx + \int_{\mathbb{S}} h(g_1^{(0)}(x)) dx \right) ,$$
which gives, by induction:

$$ \int_{\mathbb{S}} h(x) dx = \frac{1}{2^n} \sum_{\mathbf{a} \in \{0,1\}^n } \int_{\mathbb{S}} h\left( g_{\mathbf{a}}^{(0)}(x)\right) dx .$$

Now, we use our conjugacy $\psi$ to define similar inverse branches and partitions for $f$.

\begin{definition}
For any finite word $\mathbf{a} \in \{0,1\}^n$, define
$$ g_{\mathbf{a}} := \psi \circ g_{\mathbf{a}}^{(0)} \circ \psi^{-1} \text{ and } S_{\mathbf{a}} := \psi\left( S_{\mathbf{a}}^{(0)} \right) .$$
Notice that $S_{\mathbf{a}} = g_{\mathbf{a}}(\mathbb{S})$, and that $g_{\mathbf{a}}$ is a local inverse of $f$. In particular, it is $C^{1+\text{Lip}}$ on $[0,1)$.
\end{definition}

By definition of the inverse branches, the associated partition, and the pushforward measure $\mu$, we see that we have the following identity, holding for measurable maps $h:\mathbb{S} \rightarrow \mathbb{C}$:

$$ \int_\mathbb{S} h d\mu = \frac{1}{2^n} \sum_{\mathbf{a} \in \{0,1\}^n} \int_{\mathbb{S}} h(g_{\mathbf{a}}(x)) d\mu(x). $$

In spirit, this identity is a consequence of the autosimilarity of $\mu$ (which itself is a consequence of the autosimilarity of $\psi$, encoded by the fact that it is a conjugacy between expanding maps). It will allows us to work on small scales with control, as the maps $g_{\mathbf{a}}$ \say{zoom in} while respecting the structure of $\psi$. \\

We define some notations for \say{orders of magnitude}. If there exists a constant $C>0$ independent of $n$ such that $a_n \leq C b_n$, then we write $a_n \lesssim b_n$. If $a_n \lesssim b_n \lesssim a_n$, we denote $a_n \simeq b_n$. If there exist $C,\alpha>0$, independent of $n$ and $\delta$, such that $ C^{-1} e^{- \alpha \delta n} a_n  \leq b_n \leq C a_n e^{\alpha \delta n} $, then we denote it by $a_n \sim b_n$. (Recall that $\delta$ measure the $C^{1+\text{Lip}}$ distance between $f$ and $f_0$). Then:

\begin{lemma} 
Recall that the perturbation $f$ is supposed to satisfy $\|f-f_0\|_{C^{1+\text{Lip}}}<\delta$. The following order of magnitude holds, for $n \geq 1$ and $\mathbf{a} \in \{0,1\}$:
$$ g_{\mathbf{a}}' \sim 2^{-n} \ , \ \text{diam}(S_\mathbf{a}) \sim 2^{-n}.$$
Moreover, $ \mu(S_\mathbf{a}) = 2^{-n} $.
\end{lemma}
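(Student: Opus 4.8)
The plan is to control the multiplicative distortion of the composed inverse branches $g_{\mathbf a}$ by comparing them to the corresponding branches $g_{\mathbf a}^{(0)}$ of the doubling map, whose derivative is exactly $2^{-n}$, and to extract the estimate $g_{\mathbf a}' \sim 2^{-n}$ from a bounded-distortion argument along the backward orbit. Concretely, write $g_{\mathbf a} = g_{a_1}\circ\cdots\circ g_{a_n}$, so that by the chain rule $\ln g_{\mathbf a}'(x) = \sum_{k=1}^n \ln g_{a_k}'\big(g_{a_{k+1}}\circ\cdots\circ g_{a_n}(x)\big) = -\sum_{k=1}^n \ln f'\big(g_{a_k}\circ\cdots\circ g_{a_n}(x)\big)$. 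Since $\|f-f_0\|_{C^{1+\mathrm{Lip}}}<\delta$ and $f_0'\equiv 2$, we have $\ln f' = \ln 2 + O(\delta)$ with in addition $f'$ Lipschitz with constant $O(\delta)$ away from $\ln 2$; in particular $\ln f'$ is Lipschitz with a constant that is $O(\delta)$ (using $f' \geq 2 - \delta$ bounded below to pass from $f'$ to $\ln f'$). This gives immediately the crude bound $\big|\ln g_{\mathbf a}'(x) + n\ln 2\big| \leq n\cdot O(\delta)$, i.e. $e^{-C\delta n} 2^{-n} \leq g_{\mathbf a}'(x) \leq e^{C\delta n} 2^{-n}$, which is exactly the relation $g_{\mathbf a}' \sim 2^{-n}$ with the convention introduced before the lemma. (Here I use that the maps $g_{\mathbf a}^{(0)}$, hence the $g_{\mathbf a}$, are genuine contractions so the backward orbit points $g_{a_k}\circ\cdots\circ g_{a_n}(x)$ all lie in $\mathbb S^1$ and no issue of domain arises.)

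For the diameter estimate, I would combine the derivative bound with the mean value theorem: $\mathrm{diam}(S_{\mathbf a}) = \mathrm{diam}\big(g_{\mathbf a}([0,1))\big)$, and since $g_{\mathbf a}$ is $C^1$ on $[0,1)$ with $g_{\mathbf a}' \sim 2^{-n}$ uniformly on that interval, we get $\mathrm{diam}(S_{\mathbf a}) = \int_0^1 g_{\mathbf a}'(t)\,dt \sim 2^{-n}$, the upper and lower bounds on the integral coming from the pointwise bounds on $g_{\mathbf a}'$. One subtlety is that "diameter" on the circle should be read as the length of the arc $S_{\mathbf a}$, which is legitimate since $S_{\mathbf a} = \psi(S_{\mathbf a}^{(0)})$ is an arc (image of an interval under the homeomorphism $\psi$) of length $< 1$ for $n \geq 1$; the integral formula for its length is then valid.

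Finally, the exact identity $\mu(S_{\mathbf a}) = 2^{-n}$ is the cleanest part: by definition $\mu = \psi_*(dx)$ and $S_{\mathbf a} = \psi(S_{\mathbf a}^{(0)})$, so $\mu(S_{\mathbf a}) = \lambda\big(\psi^{-1}(\psi(S_{\mathbf a}^{(0)}))\big) = \lambda(S_{\mathbf a}^{(0)})$ because $\psi$ is a bijection, and $\lambda(S_{\mathbf a}^{(0)}) = 2^{-n}$ since the cylinders $\{S_{\mathbf a}^{(0)}\}_{\mathbf a \in \{0,1\}^n}$ form a partition of $\mathbb S^1$ into $2^n$ intervals each of length $2^{-n}$ (they are the dyadic intervals of generation $n$). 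I do not expect a serious obstacle here; the only thing to be careful about is making the distortion bookkeeping uniform in $\mathbf a$ and in the base point $x$, which is automatic since the per-step error $O(\delta)$ from $\ln f'$ does not depend on the branch chosen, and keeping track that the constants $C,\alpha$ in the relation $\sim$ are independent of both $n$ and $\delta$, which holds because they come only from the global bounds $\|\ln f' - \ln 2\|_\infty \lesssim \delta$ and $\mathrm{Lip}(\ln f') \lesssim \delta$.
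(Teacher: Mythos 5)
Your proposal is correct and follows essentially the same route as the paper, which simply notes that $(f^n)'\sim 2^n$ (your Birkhoff-sum bound on $\ln f'$ is exactly this), deduces the diameter estimate by the mean value theorem, and gets $\mu(S_{\mathbf{a}})=2^{-n}$ directly from the definitions of $\mu$ and $S_{\mathbf{a}}$. Your write-up just makes the distortion bookkeeping and the pushforward computation explicit.
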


The proof is straightforward, as $(f^n)' \sim 2^n$ (denoting $f^n := f \circ \dots \circ f$). The second estimate is a consequence from the first, using the mean value theorem. The last equality is by definition of $\mu$ and $S_\mathbf{a}$. \\

Finally, we prove a nonconcentration estimate for $\psi$. 

\begin{lemma}
There exists $C,\delta_{\mu}>0$ such that:

$$ \forall x \in \mathbb{S}, \ \forall r>0, \ \mu\left( [x-r,x+r] \right) \leq C r^{\delta_\mu} . $$
\end{lemma}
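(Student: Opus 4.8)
The plan is to read everything off the self-similar structure of $\mu$ established in the preceding lemma: a generation-$n$ cylinder $S_{\mathbf a}$ ($\mathbf a\in\{0,1\}^n$) has mass \emph{exactly} $2^{-n}$ and diameter $\text{diam}(S_{\mathbf a})\sim 2^{-n}$, so that $C^{-1}e^{-\alpha\delta n}2^{-n}\le \text{diam}(S_{\mathbf a})\le Ce^{\alpha\delta n}2^{-n}$ for some $C>0$. Since $\mu$ is a probability measure the desired inequality is automatic once $r$ is bounded below (just enlarge $C$), so I only need to treat small $r$. The first step is to choose the scale: let $n=n(r)$ be the smallest integer such that every generation-$n$ cylinder has diameter at most $2r$. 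This is legitimate because $\text{diam}(S_{\mathbf a})\le Ce^{\alpha\delta n}2^{-n}=C\,2^{-(1-\alpha\delta/\ln 2)n}\to 0$ (the exponent $1-\alpha\delta/\ln 2$ being positive for $\delta$ small), and by minimality of $n$ the diameter bound is essentially saturated: $r\lesssim Ce^{\alpha\delta n}2^{-n}\le 2r$.

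The second step is a counting argument. The generation-$n$ cylinders are the $\psi$-images of the $2^n$ consecutive dyadic arcs, hence are themselves $2^n$ consecutive arcs around $\mathbb{S}^1$; those meeting the arc $I:=[x-r,x+r]$ therefore form a consecutive block, and their union is an arc $J\supseteq I$. Each of them has diameter $\le 2r$, so $\text{length}(J)\le 6r$; on the other hand each has length $\ge C^{-1}e^{-\alpha\delta n}2^{-n}$, so the number $k$ of generation-$n$ cylinders meeting $I$ obeys $k\lesssim r\,e^{\alpha\delta n}2^{n}$. Multiplying by the common mass $2^{-n}$ of a cylinder,
$$ \mu(I)\;\le\;k\,2^{-n}\;\lesssim\;r\,e^{\alpha\delta n}. $$
It remains to turn $e^{\alpha\delta n}$ into a power of $r$: from $r\lesssim Ce^{\alpha\delta n}2^{-n}\le 2r$ one gets $n=\frac{\ln(1/r)}{\ln 2-\alpha\delta}+O(1)$, hence $e^{\alpha\delta n}\lesssim r^{-\alpha\delta/(\ln 2-\alpha\delta)}$, and therefore
$$ \mu([x-r,x+r])\;\lesssim\;r^{\,1-\frac{\alpha\delta}{\ln 2-\alpha\delta}}\;=\;r^{\frac{\ln 2-2\alpha\delta}{\ln 2-\alpha\delta}}. $$
So the lemma holds with $\delta_\mu:=\frac{\ln 2-2\alpha\delta}{\ln 2-\alpha\delta}$, which is positive as soon as $\delta<\frac{\ln 2}{2\alpha}$, in particular throughout the admissible range of $\delta$.

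The one step to watch is the counting: the generation-$n$ cylinders are \emph{not} all of the same size, their diameters filling the whole band $[C^{-1}e^{-\alpha\delta n}2^{-n},\,Ce^{\alpha\delta n}2^{-n}]$, so an interval of length of order $r$ may meet polynomially many (in $1/r$) of them rather than a bounded number; this is precisely what forces $\delta_\mu<1$, and the only thing to verify is that the loss is sub-critical, i.e. $\delta_\mu>0$, which is the computation above. (Alternatively one could argue more softly by invoking the assertion of Proposition 1.3 that $\psi^{-1}$ is $\beta$-Hölder for some $\beta>0$: then $\psi^{-1}(I)$ is an arc of diameter $\lesssim r^\beta$, whence $\mu(I)=\lambda(\psi^{-1}(I))\lesssim r^\beta$. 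The argument above is just the quantitative form of this, arranged so as to display the dependence of $\delta_\mu$ on the size $\delta$ of the perturbation.)
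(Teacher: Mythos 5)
Your argument is correct and is essentially the paper's: both proofs cover $[x-r,x+r]$ by generation-$n(r)$ cylinders with $n(r)\simeq\ln(1/r)$ and combine $\mu(S_{\mathbf a})=2^{-n}$ with $\mathrm{diam}(S_{\mathbf a})\sim 2^{-n}$, the only (harmless) difference being that you take the cylinders slightly \emph{smaller} than $r$ and control their number by a packing argument via the lower diameter bound, while the paper takes them slightly \emph{larger} than $r$ so that only $O(1)$ of them meet the interval; both yield an exponent $\delta_\mu(\delta)<1$ tending to $1$ as $\delta\to 0$. (The unused claim $Ce^{\alpha\delta n}2^{-n}\le 2r$ is not justified by your minimality criterion, but only the lower bound $r\lesssim Ce^{\alpha\delta n}2^{-n}$ is needed, and that one does follow.)
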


This can be rewritten as $$ \lambda\left( \left\{ y \in \mathbb{S}, \ \psi(y) \in [x-r,x+r] \right\} \right) \leq C r^{\delta_\mu} ,$$
where $\lambda$ denotes the Lebesgue measure on the circle, which explains why we can see this estimate on $\mu$ as a nonconcentration estimate on $\psi$.

\begin{proof}

Fix $x \in \mathbb{S}$ and $r>0$ small enough. We have, for any $n \geq 1$:

$$ \mu\left( [x-r,x+r] \right) \leq \underset{S_{\mathbf{a}} \cap [x-r,x+r] \neq \emptyset }{\sum_{\mathbf{a} \in \{0,1\}^n}} \mu(S_{\mathbf{a}}) = 2^{-n} \cdot \#\left\{\mathbf{a} \in \{0,1\}^n, \ S_{\mathbf{a}} \cap [x-r,x+r] \neq \emptyset \right\} $$

Recall that $ \text{diam}(S_\mathbf{a}) \sim 2^{-n}$. In particular, there exists $C,\alpha>0$ such that $\text{diam}(S_\mathbf{a}) \geq C 2^{-n} e^{-\delta \alpha n}$. Choosing $n(r) := \lfloor \frac{ |\ln(r)| }{\ln 2 + \delta \alpha  }\rfloor$ yields $ \text{diam}(S_\mathbf{a}) \geq C r $, so that

$$ \#\left\{\mathbf{a} \in \{0,1\}^n, \ S_{\mathbf{a}} \cap [x-r,x+r] \neq \emptyset \right\} \leq 4 C, $$
and so 
$$ \mu\left( [x-r,x+r] \right) \leq 4 C \cdot 2^{-n(r)} \leq C' r^{\delta_\mu} $$
for some $C'>0$ and for $ \delta_\mu(\delta) := (1-\delta \alpha/ \ln 2)^{-1} < 1 $. Notice that $\delta_\mu$ approaches one as $\delta$ gets smaller. \end{proof}

\section{From the continuous to the discrete}

We are ready to reduce our Van Der Corput lemma to a \say{nonlinearity estimate}. The goal of this section is to approximate the integral by a finite sum of exponential, which will be controlled by a powerful theorem of Bourgain from additive combinatorics, as soon as those nonlinearity estimates are checked. In this section, 5 quantities will be at play: $\xi$, $n$, $k$, $\varepsilon_0$ and $\delta$. The only two variables are $\xi$ and $n$, and they are related by a relation of the form $n \simeq \ln |\xi|$. The quantities $k$, $\varepsilon_0$ are constant parameters that will be fixed in section 4 while applying Theorem 4.1. The parameter $\delta$ will be chosen small before $\varepsilon_0$. \\

Our goal is to prove a bound of the form
$$ \forall \xi, \ \left| \widehat{\mu}(\xi) \right| \lesssim |\xi|^{- \rho}. $$

In the next lemma, we will consider a family of words $\mathbf{a}_{j}  \in \{0,1\}^{n}$, and we will denote their concatenation $\mathbf{A} := \mathbf{a}_0 \dots \mathbf{a}_k \in \{0,1\}^{(k+1)n}$. Same for words $\mathbf{b}_j \in \{0,1\}$ and their concatenation $\mathbf{B} := \mathbf{b}_1 \dots \mathbf{b}_k \in \{0,1\}^{kn}$. This section is devoted to the proof of the following reduction.

\begin{lemma}

Fix some $\varepsilon_0>0$ small enough. Define, for $j=1, \dots, k $, $\mathbf{A} = \mathbf{a}_0 \dots \mathbf{a}_k \in \{ 0,1\}^{(k+1)n}$ and $\mathbf{b} \in \{0,1\}^n$ : $$ \zeta_{j,\mathbf{A}}(\mathbf{b}) := 4^{kn} g_{\mathbf{a}_{j-1} \mathbf{b}}'\left( x_{\mathbf{a}_j} \right) \sim 1 $$
where $x_{\mathbf{a}} := g_{\mathbf{a}}(0) \in S_{\mathbf{a}}$. Then, for $|\xi|$ large enough, the following holds: $$|\widehat{\mu}(\xi)|^2 \lesssim e^{- \varepsilon_0  \delta_\mu n / 4} + 2^{-(k+1)n} \sum_{\mathbf{A} \in \{0,1\}^{(k+1)n}} \sup_{\eta \in [e^{\varepsilon_0 n/2}, e^{2 \varepsilon_0 n}]} 2^{-kn} \left| \sum_{\mathbf{B} \in \{0,1\}^{kn}} e^{ i \eta \zeta_{\mathbf{A},1}(\mathbf{b}_1) \dots \zeta_{\mathbf{A},k}(\mathbf{b}_k) } \right|,$$

where $n := \left\lfloor \frac{(\ln |\xi|)}{ (2k+2) \ln 2 - \varepsilon_0} \right\rfloor$.

\end{lemma}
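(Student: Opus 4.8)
The strategy is to start from $\widehat{\mu}(\xi) = \int_{\mathbb{S}} e^{i\xi x}\,d\mu(x)$ and use the self-similarity identity $\int h\,d\mu = 2^{-m}\sum_{\mathbf{a}\in\{0,1\}^m}\int h(g_{\mathbf{a}}(x))\,d\mu(x)$ repeatedly, at scale $m = (k+1)n$, to write $\widehat{\mu}(\xi)$ as an average over $\mathbf{A}\in\{0,1\}^{(k+1)n}$ of oscillatory integrals with phase $\xi\psi$ composed with the contraction $g_{\mathbf{A}}$. Since $g_{\mathbf{A}}' \sim 2^{-(k+1)n}$ and $n$ is chosen so that $2^{(k+1)n}$ is comparable to $|\xi|^{1/2}e^{-\varepsilon_0 n/2 \cdot (\text{const})}$, the effective frequency on each piece, call it $\eta$, will land in the window $[e^{\varepsilon_0 n/2}, e^{2\varepsilon_0 n}]$ — this is where the precise choice $n = \lfloor (\ln|\xi|)/((2k+2)\ln 2 - \varepsilon_0)\rfloor$ comes from, and I would verify the interval membership carefully using Lemma 2.5. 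On each cylinder $S_{\mathbf{A}}$ I would Taylor-expand $g_{\mathbf{A}}$ around the basepoint $x_{\mathbf{a}_0}$: the linear term produces a frequency $\eta$, and the quadratic and higher remainder, since $g_{\mathbf{A}}$ has Lipschitz derivative with the right scaling, contributes a phase error of size $O(\eta \cdot 2^{-(k+1)n}) = O(1)$ on $S_\mathbf{A}$, hence is harmlessly absorbed (this uses $\text{diam}(S_\mathbf{A})\sim 2^{-(k+1)n}$ again, plus $\mathrm{Lip}(g_\mathbf{A}')\sim 2^{-(k+1)n}$, which follows from the $C^{1+\mathrm{Lip}}$ control and the chain rule).

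Next I would take absolute values and apply Cauchy–Schwarz (the square on $|\widehat\mu|$ in the statement signals this): bounding $|\widehat\mu(\xi)|^2$ by the average over $\mathbf{A}$ of the squared modulus of the inner integral, then crudely estimating the diagonal-type contributions. The term $e^{-\varepsilon_0\delta_\mu n/4}$ is the error coming from truncating: cylinders $S_\mathbf{A}$ near the non-smooth points $\psi(0),\psi(1/2)$ (where the Taylor control degrades), or a low-frequency regime, are discarded, and their total $\mu$-mass is controlled by the nonconcentration estimate Lemma 2.6, $\mu([x-r,x+r])\le Cr^{\delta_\mu}$, with $r$ of order $e^{-\varepsilon_0 n/2}$ or so. After this reduction, within each $S_\mathbf{A}$ I would further subdivide: write the remaining $kn$ symbols as $\mathbf{B} = \mathbf{b}_1\cdots\mathbf{b}_k$ with each $\mathbf{b}_j\in\{0,1\}^n$, and use the multiplicative structure of derivatives under composition, $g_{\mathbf{a}_{j-1}\mathbf{b}_j}' (x_{\mathbf{a}_j}) $ being the relevant Jacobian factor, to factor the linearized phase as a product $\eta\,\zeta_{\mathbf{A},1}(\mathbf{b}_1)\cdots\zeta_{\mathbf{A},k}(\mathbf{b}_k)$ with $\zeta_{j,\mathbf{A}}(\mathbf{b}) = 4^{kn} g_{\mathbf{a}_{j-1}\mathbf{b}}'(x_{\mathbf{a}_j})\sim 1$ by Lemma 2.5. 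Replacing the supremum over the genuine effective frequency by the supremum over $\eta$ in the stated window, and averaging over $\mathbf{A}$, then yields exactly the claimed bound.

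The main obstacle I anticipate is bookkeeping the error terms so that they are genuinely of the stated size: one must (i) confirm the effective frequency $\eta$ really lies in $[e^{\varepsilon_0 n/2}, e^{2\varepsilon_0 n}]$ uniformly, which requires tracking the $e^{\pm\alpha\delta n}$ slack in Lemma 2.5 and checking it is dominated by the $\varepsilon_0 n$ margins — this is why $\delta$ must be chosen small compared to $\varepsilon_0$; (ii) show the quadratic Taylor remainder in the phase is $O(1)$ and hence contributes only a bounded multiplicative factor after taking moduli, which needs $\mathrm{Lip}(g_\mathbf{A}')\lesssim 2^{-(k+1)n}$ with a constant independent of $n$ (true since we only compose $g_0,g_1$, whose derivatives and Lipschitz seminorms are uniformly controlled, and the contraction is exponential); and (iii) handle the cylinders meeting the bad set $\{\psi(0),\psi(1/2)\}$ via Lemma 2.6 with the exponent $\delta_\mu$ surfacing precisely as $e^{-\varepsilon_0\delta_\mu n/4}$. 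The decomposition into the product of $\zeta$'s is essentially algebraic once the chain rule for $g_\mathbf{A}'$ is written out, so the analytic heart is really the uniform frequency-window and remainder control.
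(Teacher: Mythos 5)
Your outline assembles the right ingredients---autosimilarity, Cauchy--Schwarz, linearization, factorization into the $\zeta_{\mathbf{A},j}$, a frequency window, and a diagonal term controlled by the nonconcentration of $\mu$---but the quantitative heart of the lemma, the linearization step, fails as you set it up. You expand at depth $(k+1)n$ around a single basepoint and claim the Taylor remainder gives a phase error $O(\eta\, 2^{-(k+1)n})=O(1)$ that is ``harmlessly absorbed''. Neither half of this works. With the stated choice $n=\lfloor \ln|\xi|/((2k+2)\ln 2-\varepsilon_0)\rfloor$, i.e.\ $|\xi|\simeq 2^{(2k+2)n}e^{-\varepsilon_0 n}$, the remainder of a single Taylor expansion of $g_{\mathbf{A}}$ (or even of $g_{\mathbf{A}*\mathbf{B}}$ at full depth $(2k+1)n$) over a region of diameter $\simeq 1$, multiplied by $|\xi|$, is exponentially \emph{large}, not $O(1)$; and your identification of the effective frequency with some $\eta\le e^{2\varepsilon_0 n}$ at depth $(k+1)n$ is inconsistent with that same choice of $n$, since $|\xi|\, g_{\mathbf{A}}'\sim 2^{(k+1)n}e^{-\varepsilon_0 n}$. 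Moreover, even a genuinely bounded (but not small) phase error depending on $\mathbf{B}$ cannot be ``absorbed as a bounded multiplicative factor'' after taking moduli: the sum--product theorem is applied to the exact sum $\sum_{\mathbf{B}}e^{i\eta\zeta_{\mathbf{A},1}(\mathbf{b}_1)\cdots\zeta_{\mathbf{A},k}(\mathbf{b}_k)}$, so the linearization error must be $o(1)$ (the paper gets $e^{-\varepsilon_0 n/2}$). The paper's mechanism lives exactly where your sketch is vague: Cauchy--Schwarz is applied at depth $N=(2k+1)n$ with the \emph{interleaved} word $\mathbf{C}=\mathbf{a}_0\mathbf{b}_1\mathbf{a}_1\cdots\mathbf{b}_k\mathbf{a}_k$; squaring produces the difference phase $\xi(g_{\mathbf{C}}(x)-g_{\mathbf{C}}(y))$, the mean value theorem rewrites it as $\xi\, g_{\mathbf{A}\#\mathbf{B}}'(z)(\widehat{x}-\widehat{y})$ where $\widehat{x}-\widehat{y}$ is already contracted by $g_{\mathbf{a}_k}$ (one factor $2^{-n}$), and replacing each chain-rule factor $g_{\mathbf{a}_{j-1}\mathbf{b}_j}'$ by its value at the block basepoint $x_{\mathbf{a}_j}$ costs only the oscillation of the derivative over an $n$-cylinder (a second factor $2^{-n}$). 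This double gain is what yields the error $\lesssim |\xi|\,2^{-(2k+2)n}e^{\alpha\delta n}\lesssim e^{-\varepsilon_0 n/2}$ and dictates the relation between $n$ and $\xi$; a concatenated $\mathbf{A}$-then-$\mathbf{B}$ decomposition with one basepoint and no pairing $(x,y)$ has neither gain.

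A secondary but genuine misconception: the term $e^{-\varepsilon_0\delta_\mu n/4}$ has nothing to do with cylinders near the points $\psi(0),\psi(1/2)$---the branches $g_{\mathbf{a}}$ are uniformly $C^{1+\mathrm{Lip}}$ local inverses of $f$, so no Taylor control degrades anywhere. It arises solely from deleting the near-diagonal set $\{|x-y|\le e^{-(\varepsilon_0/2-\alpha\delta)n}\}$ in the double integral produced by Cauchy--Schwarz, whose $\mu\otimes\mu$-mass is bounded via the nonconcentration estimate; this deletion is also precisely what secures the \emph{lower} bound on $|\eta_{\mathbf{A}}(x,y)|$ in the frequency window, a point your plan gestures at (``diagonal-type contributions'') but never ties to the window.
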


\begin{proof}

First of all, using the autosimilarity formula for $\mu$, we get for any integer $N$:

$$ \int_0^1 e^{i \xi \psi(x)} dx = \int_0^1 e^{i \xi x} d\mu(x) $$
$$ = 2^{-N} \sum_{\mathbf{C} \in \{0,1\}^{N} } \int_{\mathbb{S}}  e^{i \xi g_{\mathbf{C}}(x)} d\mu(x) . $$

The actual value of $g_{\mathbf{C}}$ isn't important to us, the only relevant information for Fourier decay is its non-concentration. Hence, we are encouraged to use the Cauchy-Schwarz inequality like so:

$$ |\widehat{\mu}(\xi)|^2 \leq 2^{-N} \sum_{\mathbf{C} \in \{0,1\}^{N}} \left| \int_{\mathbb{S}}  e^{i \xi g_{\mathbf{C}}(x)} d\mu(x) \right|^2$$
$$ = 2^{-N} \sum_{\mathbf{C} \in \{0,1\}^{N}} \iint_{\mathbb{S}\times \mathbb{S}}  e^{i \xi \left(g_{\mathbf{C}}(x) - g_{\mathbf{C}}(y) \right)} d\mu(x) d\mu(y) .$$

Now, choose $N := (2k+1) n$, and set $\mathbf{C} = \mathbf{a}_0 \mathbf{b}_1 \mathbf{a}_1 \dots \mathbf{a}_{k-1} \mathbf{b}_k \mathbf{a}_k$, where $\mathbf{a}_i, \mathbf{b}_i \in \{0,1\}^n$. In a more compact fashion, we will denote $\mathbf{A} := \mathbf{a}_0 \mathbf{a}_1 \dots \mathbf{a}_k \in \{0,1\}^{(k+1)n} $, $ \mathbf{B} := \mathbf{b}_1 \dots \mathbf{b}_k \in \{0,1\}^{kn} $, and $\mathbf{A} * \mathbf{B} := \mathbf{C}$. This gives:

$$ |\widehat{\mu}(\xi)|^2 \lesssim 2^{-(2k+1)n} \sum_{\mathbf{A},\mathbf{B}} \iint_{\mathbb{S}\times \mathbb{S}}  e^{i \xi \left(g_{\mathbf{A*B}}(x) - g_{\mathbf{A*B}}(y) \right)} d\mu(x) d\mu(y). $$

Now, we will carefully linearize the phase. Define $\mathbf{A} \# \mathbf{B} = \mathbf{a}_0 \mathbf{b}_1 \mathbf{a}_1 \dots \mathbf{a}_{k-1} \mathbf{b}_k$. Notice that, by the mean value theorem, for all $x,y \in [0,1)$, there exists $z \in [0,1)$ such that

$$ g_{\mathbf{A*B}}(x) - g_{\mathbf{A*B}}(y) = g_{\mathbf{A\#B}}'(z)(\widehat{x}-\widehat{y}) $$

where $\widehat{x} := g_{\mathbf{a}_k}(x)$ and $\widehat{y} := g_{\mathbf{a}_k}(y)$. 
The main idea is that $g_{\mathbf{A\#B}}'(z)$ can be written as a product of $k$ functions, and this will allow us to apply the \say{sum product-phenomenon} to conclude (see section 4). We need to renormalize appropriately those functions. Define $$ \zeta_{\mathbf{A},j}(\mathbf{b}) := 4^n g_{\mathbf{a}_{j-1} \mathbf{b}}'( x_{\mathbf{a}_j} ) \sim 1 ,$$
where $x_{\mathbf{a}} := g_{\mathbf{a}}(0) \in S_{\mathbf{a}}$. The fact that $f'$ is Lipschitz gives us the following bounds:

$$ \left| 2^{-(2k+1)n} \sum_{\mathbf{A},\mathbf{B}} \iint_{\mathbb{S}\times \mathbb{S}} \left(  e^{i \xi \left(g_{\mathbf{A*B}}(x) - g_{\mathbf{A*B}}(y) - 4^{-kn} \zeta_{\mathbf{A},1}(\mathbf{b}_1) \dots \zeta_{\mathbf{A},k}(\mathbf{b}_k) (\widehat{x}-\widehat{y}) \right)} - 1 \right) d\mu(x) d\mu(y) \right| $$
$$ \lesssim |\xi|  2^{-(2k+1)n} \sum_{\mathbf{A},\mathbf{B}} \left| g_{\mathbf{A*B}}(x) - g_{\mathbf{A*B}}(y) - 4^{-kn} \zeta_{\mathbf{A},1}(\mathbf{b}_1) \dots \zeta_{\mathbf{A},k}(\mathbf{b}_k) (\widehat{x}-\widehat{y}) \right|  $$
$$ \lesssim  e^{\alpha \delta n} 2^{-(2k+2)n} |\xi| $$
for some $\alpha > 0$. This encourages us to relate $\xi$ and $n$ so that $$ |\xi| \simeq 2^{(2k+2)n} e^{- \varepsilon_0 n} $$
for some $\varepsilon_0 >0 $ small enough that will be fixed later.
This choice allows us to write, if $\delta>0$ is small enough, 

$$ |\widehat{\mu}(\xi)|^2 \lesssim  e^{- \varepsilon_0 n / 2} + 2^{-(2k+1)n} \sum_{\mathbf{A},\mathbf{B}} \iint_{\mathbb{S}\times \mathbb{S}}  e^{ i \xi 4^{kn} (\widehat{x} - \widehat{y}) \zeta_{\mathbf{A},1}(\mathbf{b}_1) \dots \zeta_{\mathbf{A},k}(\mathbf{b}_k) } d\mu(x) d\mu(y), $$

so that we may now work on the integral on the right side. Define $ \eta_{\mathbf{A}}(x,y) := \xi 4^{-k n} (\widehat{x} - \widehat{y}). $ The mean value Theorem gives us bounds of the form
$$ e^{\varepsilon_0 n} e^{-\alpha \delta n} (x-y) \lesssim |\eta_{\mathbf{A}}(x,y)| \lesssim e^{2 \varepsilon_0 n} .$$

To conclude, we just need to control the diagonal part of the integral. This is easily done using Lemma 2.2, as follows:

$$  2^{-(2k+1)n} \Big{|} \sum_{\mathbf{A},\mathbf{B}} \iint_{\left\{ |x-y| \leq e^{-(\varepsilon_0/2 - \alpha \delta) n} \right\}}  e^{ i\eta_{\mathbf{A}}(x,y) \zeta_{\mathbf{A},1}(\mathbf{b}_1) \dots \zeta_{\mathbf{A},k}(\mathbf{b}_k) } d\mu(x) d\mu(y) \Big{|} $$
$$ \lesssim \mu \otimes \mu \left( \left\{ (x,y) \in \mathbb{S} \times \mathbb{S} , \ |x-y| \leq e^{-(\varepsilon_0/2 - \alpha \delta) n} \right\} \right) $$ $$\lesssim e^{-(\varepsilon_0/2-\alpha \delta) \delta_\mu n} \lesssim e^{-\varepsilon_0 \delta_\mu n/4} .$$

So that now we may write, denoting by $D := \{ (x,y) \in \mathbb{R}^2 \ , \ |x-y| \leq e^{-(\varepsilon_0/2 - \alpha \delta)n} \}$ the previous neighborhood of the diagonal:
$$ |\widehat{\mu}(\xi)|^2 \lesssim  e^{- \varepsilon_0 n / 2} + e^{- \varepsilon_0  \delta_\mu n / 4} + 2^{-(2k+1)n} \Big{|} \sum_{\mathbf{A},\mathbf{B}} \iint_{\mathbb{S}\times \mathbb{S} \setminus D}  e^{ i \eta_\mathbf{A}(x,y) \zeta_{\mathbf{A},1}(\mathbf{b}_1) \dots \zeta_{\mathbf{A},k}(\mathbf{b}_k) } d\mu(x) d\mu(y) \Big{|} $$ 
$$\lesssim e^{- \varepsilon_0  \delta_\mu n / 4} + 2^{-(2k+1)n} \sum_{\mathbf{A} \in \{0,1\}^{(k+1)n}} \iint_{\mathbb{S}\times \mathbb{S} \setminus D} \left| \sum_{\mathbf{B} \in \{0,1\}^{kn}} e^{ i \eta_{\mathbf{A}}(x,y) \zeta_{\mathbf{A},1}(\mathbf{b}_1) \dots \zeta_{\mathbf{A},k}(\mathbf{b}_k) } \right| d\mu(x) d\mu(y) $$
$$ \lesssim e^{- \varepsilon_0  \delta_\mu n / 4} + 2^{-(k+1)n} \sum_{\mathbf{A} \in \{0,1\}^{(k+1)n}} \sup_{|\eta| \in [e^{\varepsilon_0 n/2}, e^{2 \varepsilon_0 n}]} 2^{-kn} \left| \sum_{\mathbf{B} \in \{0,1\}^{kn}} e^{ i \eta \zeta_{\mathbf{A},1}(\mathbf{b}_1) \dots \zeta_{\mathbf{A},k}(\mathbf{b}_k) } \right| .$$

\end{proof}

\section{The sum product phenomenon}

To conclusion of the proof will be a consequence of the following powerful Theorem of Bourgain:

\begin{theorem}[Sum-product phenomenon]

Fix $0 < \gamma < 1$. There exist $k \in \mathbb{N}$ and $\varepsilon_1 > 0$ depending only on $\gamma$ such that the following holds for $\eta \in \mathbb{R}$ large enough. Let $\mathcal{Z}$ be a finite set, and fix some maps $\zeta_j : \mathcal{Z} \rightarrow \mathbb{R} $, $j = 1, \dots , k$, such that, for all $j$:
$$ \forall \mathbf{b} \in \mathcal{Z}, \ |\eta|^{-\varepsilon_1/2} \leq |\zeta_j(\mathbf{b})| \leq |\eta|^{\varepsilon_1/2} $$ and 
$$\forall \sigma  \in [ | \eta |^{-2} , |\eta|^{- \varepsilon_1} ], \quad  \# \{ (\mathbf{b},\mathbf{c}) \in \mathcal{Z}^2 , \ |\zeta_j(\mathbf{b})-\zeta_j(\mathbf{c})| \leq \sigma \} \leq \left( \# \mathcal{Z} \right)^2 \sigma^{\gamma}. \quad (*)$$
Then there exists a constant $c > 0$ depending only on $\gamma$ such that 
$$ \left| \frac{1}{\# \mathcal{Z}^k} \sum_{\mathbf{b}_1, \dots \mathbf{b}_k \in \mathcal{Z}} \exp\left( i \eta \zeta_1(\mathbf{b}_1) \dots \zeta_k(\mathbf{b}_k) \right)   \right| \leq c | \eta | ^{ - \varepsilon_1 } .$$

\end{theorem}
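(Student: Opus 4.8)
Since this is Bourgain's theorem, the plan is not to reprove it but to recall the chain of ideas that leads to it; for the complete argument I would refer to \cite{BD17} (and to the treatments in \cite{LNP19} and \cite{SS20}, which adapt the same machinery). Throughout, the relevant scale is $\delta := |\eta|^{-1}$.

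First I would reduce the statement about the functions $\zeta_j$ to one about $\delta$-separated subsets of the line. Slicing the admissible range $[\delta^{\varepsilon_1/2}, \delta^{-\varepsilon_1/2}]$ into intervals of length $\delta$ and keeping, for each $j$, only the most populated classes, one may assume --- at the cost of a bounded number of such pigeonholings, hence a loss of size at most $\delta^{-C\varepsilon_1}$ --- that each $\zeta_j$ takes its values in a $\delta$-separated set $A_j$ with $\#A_j = \delta^{-\kappa_j}$ for some $0 < \kappa_j \le 1$, and that hypothesis $(*)$ survives as a genuine non-concentration estimate $\#(A_j \cap I) \lesssim \#A_j \, (|I|/\delta)^{\gamma'}$ for all intervals $I$ with $\delta \le |I| \le \delta^{\varepsilon_1}$ and some $\gamma'$ slightly smaller than $\gamma$. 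The quantity to estimate then becomes, up to these losses, a (multiplicity-weighted) exponential sum $\big| \sum_{a_1 \in A_1} \cdots \sum_{a_k \in A_k} \exp( i \eta\, a_1 \cdots a_k ) \big|$, which I want to bound by $\delta^{\varepsilon_1} \prod_j \#A_j$.

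The engine is Bourgain's discretized sum-product theorem: a $\delta$-separated, non-concentrated set $A$ at scale $\delta$ with $\#A = \delta^{-\kappa}$ and $\kappa$ bounded away from $0$ and $1$ satisfies $\#(A+A) + \#(A\cdot A) \ge \delta^{-\kappa - c}$ for some $c = c(\gamma) > 0$; equivalently, at least one of the additive or the multiplicative energy of $A$ at scale $\delta$ drops by a fixed power of $\delta$. Next I would run an $L^2$-flattening iteration on the $k$-fold multiplicative convolution: using the Pl\"unnecke-Ruzsa inequalities together with the Balog-Szemer\'edi-Gowers theorem to pass back and forth between small doubling and small energy, one shows that the additive energy at scale $\delta$ of the product set $A_1 \cdots A_k$ is at most $\delta^{\varepsilon_1} \big( \prod_j \#A_j \big)^2$ as soon as $k = k(\gamma)$ is large enough. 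Finally, Plancherel on the $\delta$-discretized circle converts this energy bound into the wanted bound on the exponential sum: a weighted measure with small additive energy at scale $\delta$ cannot produce a Fourier coefficient of absolute value exceeding $\delta^{\varepsilon_1}$ at the frequency $\eta = \delta^{-1}$.

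The hard part is not any single ingredient but the quantitative bookkeeping of the iteration: each pigeonholing and each use of Balog-Szemer\'edi-Gowers erodes the exponents $\kappa_j$ and the non-concentration exponent, one has to keep all the $\kappa_j$ away from $0$ and $1$ for the sum-product gain $c(\gamma)$ to stay uniform, and one must verify that after $k(\gamma)$ steps the total gain $\sim c(\gamma)\, k$ still dominates the accumulated losses --- which is exactly what forces $k$ and $\varepsilon_1$ to depend on $\gamma$ alone. This accounting is the content of Bourgain's argument as recalled in \cite{BD17} and \cite{SS20}, and it is the part I would not reproduce here.
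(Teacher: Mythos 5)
The paper does not reprove this theorem either: it is quoted as an easy corollary of Proposition 3.2 of \cite{BD17}, the main new point being that the $\zeta_j$ are here allowed to approach $0$ and $\infty$ polynomially in $|\eta|^{\varepsilon_1/2}$ rather than staying in a fixed compact subset of $(0,\infty)$, an adaptation the paper attributes to \cite{SS20} (see also \cite{Le21}). So your decision to cite Bourgain--Dyatlov instead of rerunning the sum-product machinery is exactly the paper's approach, and at that level the proposal is consistent with the text.

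Two of the steps you do sketch would, however, fail as written, and one of them is precisely where this statement differs from the quoted one. First, slicing $[|\eta|^{-\varepsilon_1/2},|\eta|^{\varepsilon_1/2}]$ into intervals of length $\delta=|\eta|^{-1}$ produces on the order of $|\eta|^{1+\varepsilon_1/2}$ classes, so \say{keeping only the most populated classes} costs a factor comparable to that number, not $|\eta|^{C\varepsilon_1}$, and discarding classes does not control the original sum in any case. A reduction that does work (and is the adaptation used implicitly in \cite{SS20}) is dyadic: write $\zeta_j=\pm 2^{m_j}\zeta_j'$ with $\zeta_j'\in[1,2)$, absorb $\pm 2^{m_1+\dots+m_k}$ into the frequency, which stays of size $|\eta|^{1+O(k\varepsilon_1)}$, and sum over the $O\big((\varepsilon_1\ln|\eta|)^k\big)$ class tuples, treating the classes that are tiny compared with $\#\mathcal{Z}$ trivially so that $(*)$ passes to the remaining ones with a slightly degraded exponent. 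Second, the final claim that \say{a weighted measure with small additive energy at scale $\delta$ cannot produce a Fourier coefficient exceeding $\delta^{\varepsilon_1}$ at the frequency $\eta=\delta^{-1}$} is false: the uniform measure on the $\delta$-grid in $[0,1]$ has essentially minimal additive energy at scale $\delta$, yet its Fourier coefficient at frequency $2\pi\delta^{-1}$ equals $1$. Energy, equivalently $L^2$-flattening, only controls the \emph{average} of $|\widehat{\mu}|^2$ over frequencies $|\xi|\lesssim\delta^{-1}$; in Bourgain's argument the pointwise bound at $\eta$ is recovered by an extra Cauchy--Schwarz in one variable, using the non-concentration of that single $\zeta_j$ to spread $\eta(\zeta_j(\mathbf{b})-\zeta_j(\mathbf{c}))$ over a whole range of frequencies to which the averaged bound applies. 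Neither issue affects your conclusion, since you ultimately defer to \cite{BD17}, but they are exactly the parts one would need to write correctly to upgrade Proposition 3.2 there to the version stated here.
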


Theorem 4.2 is an example of results called \say{sum-product phenomenons}. The main mechanism behind it is the fact that \say{multiplicative structure} seems to behave chaotically from an additive point of view. Thus, enough multiplicative structure in the phase will produce additive pseudo-randomness, which might implies some cancellations - and it does (at some scale), as soon as the phase isn't too much concentrated. See \cite{Gr09} for a gentle introduction to those ideas. \\

This version is an easy corollary of Proposition 3.2 in \cite{BD17}. A similar statement in a two dimensional setting can be found in \cite{Le21}, the proof in the one-dimensional case is analogous. The original sum-product phenomenon of Bourgain in \cite{BD17} only deals with maps $\zeta_j$ that takes images away from 0 and infinity, but such an adaptation was already used implicitely in the work of Salhsten and Steven \cite{SS20}. \\

Our goal is to apply Theorem 4.2 with $ \mathcal{Z} := \{0,1\}^n $ and $\zeta_j := \zeta_{\mathbf{A},j}$. The fact that $\zeta_{\mathbf{A},j} \sim 1$ means that there exists a constant $\alpha>0$ such that $e^{-\alpha \delta n} \leq |\zeta_{\mathbf{A},j}(\mathbf{b})| \leq e^{\alpha \delta n}$, which gives the bound $|\eta|^{-\varepsilon_1/2} \leq |\zeta_{\mathbf{A},j}(\mathbf{b})| \leq |\eta|^{\varepsilon_1/2}$ for $n$ large enough and $\delta$ small enough. The only difficult requirement to check is the \say{non concentration hypothesis} $(*)$ (which is a non-linearity estimate on $f$). In $\cite{SS20}$ and $\cite{Le21}$, this estimate is checked using \emph{Dolgopyat's estimates}, such as found in $\cite{Do98}$.
In $\cite{LNP19}$, the non-concentration estimates are checked using regularity estimates for stationary measures of random walks. In the early work of Bourgain and Dyatlov $\cite{BD17}$, the nonconcentration estimates are checked directly, without the need of any additional technology. To get an elementary conclusion, we choose to specify a particular family of perturbation on the doubling map for which some explicit computations can be done. We postpone to the next section the proof of the following

\begin{lemma}
Fix $\gamma:=1/100$. Then Theorem 4.2 fixes some $k \in \mathbb{N}$ and $\varepsilon_1 \in ]0,1[$. Fix $\varepsilon_0 := 1/20$ and $\delta \in ]0, \varepsilon_0 \varepsilon_1/2000[$. We call a block $\mathbf{A}=\mathbf{a_0} \dots \mathbf{a}_k \in \{0,1\}^{(k+1)n}$ regular if for all $j \in \llbracket 1,k\rrbracket$: 
$$\forall \sigma  \in [ e^{- 4 \varepsilon_0 n} , e^{-\varepsilon_0 \varepsilon_1 n/2} ], \quad \# \{ (\mathbf{b},\mathbf{c}) \in (\{0,1\}^{n})^2 , \  |\zeta_{\mathbf{A},j}(\mathbf{b})  - \zeta_{\mathbf{A},j}(\mathbf{c})| \leq \sigma \} \leq 4^n \sigma^{1/100}.$$

 Denote the set of regular blocks by $\mathcal{R}^{k+1}_n$. Then, \underline{for our particular perturbation $f_\delta$}, most blocks are regular:

$$ 2^{-(k+1)n} \# \left( \{0,1\}^{(k+1)n} \setminus \mathcal{R}_n^{k+1} \right) \lesssim \delta^{-1} e^{- \varepsilon_0 \varepsilon_1 n/400} .$$
\end{lemma}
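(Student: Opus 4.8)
The plan is to prove the non-concentration estimate $(*)$ holds for "most" blocks $\mathbf{A}$ by analyzing the explicit structure of the perturbation $f_\delta$, and controlling the exceptional set via a second-moment (or first-moment) estimate over the randomness of $\mathbf{A}$ under the uniform measure on $\{0,1\}^{(k+1)n}$.

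\textbf{Step 1: Unwind $\zeta_{\mathbf{A},j}$ in terms of the derivative cocycle of $f_\delta$.} First I would write $\zeta_{\mathbf{A},j}(\mathbf{b}) = 4^n g_{\mathbf{a}_{j-1}\mathbf{b}}'(x_{\mathbf{a}_j})$ and expand $g_{\mathbf{a}_{j-1}\mathbf{b}}'$ via the chain rule as a product of $2n$ inverse-branch derivatives, i.e.\ in terms of $(f_\delta^{2n})'$ evaluated along the orbit. Because $f_\delta$ is defined through $f_\delta(x) = z_\delta \int_0^{2x} e^{\delta\Phi(t)}dt$, we have $f_\delta'(x) = 2 z_\delta e^{\delta \Phi(2x)}$, so $\ln f_\delta'$ is an explicit affine-by-pieces (sawtooth) function of $x$ up to the constant $\ln(2z_\delta)$. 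Thus $\ln \zeta_{\mathbf{A},j}(\mathbf{b}) - \ln\zeta_{\mathbf{A},j}(\mathbf{c})$ becomes, after telescoping, a sum of the form $\delta \sum_{i} \big(\Phi(\text{something}(\mathbf{b})) - \Phi(\text{something}(\mathbf{c}))\big)$ over $\simeq n$ scales; the leading-order term (the difference at the coarsest relevant scale, where the two words $\mathbf{b},\mathbf{c}$ first agree or first differ) will be roughly $\delta$ times a sawtooth evaluated at points determined by $\mathbf{b},\mathbf{c}$ and by $\mathbf{A}$, with the contributions from finer scales being exponentially smaller geometric-type tails. This is exactly the BD17-style linearization: the nonlinearity of $f_\delta$ is carried by the $\delta\Phi$ term and is "visible" because $\Phi$ has a definite slope.

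\textbf{Step 2: Reduce $(*)$ to a counting/anticoncentration statement for a sawtooth sum.} Having isolated the dominant term, the event $|\zeta_{\mathbf{A},j}(\mathbf{b}) - \zeta_{\mathbf{A},j}(\mathbf{c})| \le \sigma$ (for $\sigma$ in the dyadic window $[e^{-4\varepsilon_0 n}, e^{-\varepsilon_0\varepsilon_1 n/2}]$) translates, using $\zeta \sim 1$ and $|e^u - 1| \simeq |u|$ for small $u$, into $|\ln\zeta_{\mathbf{A},j}(\mathbf{b}) - \ln\zeta_{\mathbf{A},j}(\mathbf{c})| \lesssim \sigma$. I would then show that for a \emph{fixed} generic $\mathbf{A}$, the map $\mathbf{b} \mapsto \ln\zeta_{\mathbf{A},j}(\mathbf{b})$ does not take too many values in any interval of length $\sigma$: concretely, one wants $\#\{(\mathbf{b},\mathbf{c}) : |\ln\zeta_{\mathbf{A},j}(\mathbf{b}) - \ln\zeta_{\mathbf{A},j}(\mathbf{c})| \le \sigma\} \le 4^n \sigma^{1/100}$. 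Because $\sigma \ge e^{-4\varepsilon_0 n}$ is not too small compared to $2^{-n}$ (here is where the precise choices $\varepsilon_0 = 1/20$, $\delta < \varepsilon_0\varepsilon_1/2000$ and $\gamma = 1/100$ are calibrated so that the windows nest correctly), I only need a fairly weak "spreading" of the values: the $2n$-fold product structure plus the definite slope of $\Phi$ means the first $\simeq cn$ binary digits of $\mathbf{b}$ already move $\ln\zeta$ on a scale $\gg \sigma$, which caps the number of collisions. This is a deterministic statement for each $\mathbf{A}$ in a large set.

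\textbf{Step 3: Control the exceptional set by averaging over $\mathbf{A}$.} The blocks $\mathbf{A}$ for which $(*)$ fails are those where the sawtooth sum defining $\mathbf{b}\mapsto\ln\zeta_{\mathbf{A},j}(\mathbf{b})$ is "accidentally flat" — e.g.\ many evaluation points land near the discontinuities $\{1/4, 3/4\}$ of $\Phi$, or the various scale contributions conspire to cancel. I would estimate $2^{-(k+1)n}\#(\{0,1\}^{(k+1)n}\setminus\mathcal{R}_n^{k+1})$ by a union bound over $j \in \llbracket 1,k\rrbracket$ and over a dyadic net of scales $\sigma$, and for each $(j,\sigma)$ bound the probability (over uniform $\mathbf{A}$) that the collision count exceeds $4^n\sigma^{1/100}$ via Markov's inequality applied to the expected collision count $\mathbb{E}_{\mathbf{A}}\#\{(\mathbf{b},\mathbf{c}) : \dots\} = \sum_{\mathbf{b},\mathbf{c}} \mathbb{P}_{\mathbf{A}}(|\ln\zeta_{\mathbf{A},j}(\mathbf{b}) - \ln\zeta_{\mathbf{A},j}(\mathbf{c})| \le \sigma)$. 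The inner probability is controlled by a Fourier/oscillatory argument or by direct examination: for fixed $\mathbf{b}\neq\mathbf{c}$, as $\mathbf{A}$ varies the quantity $\ln\zeta_{\mathbf{A},j}(\mathbf{b}) - \ln\zeta_{\mathbf{A},j}(\mathbf{c})$ ranges over a set of size $\simeq \delta \cdot(\text{slope of }\Phi)\cdot 2^{-(\text{agreement length of }\mathbf{b},\mathbf{c})}$, so it lands in a $\sigma$-interval with probability $\lesssim \sigma/\delta + 2^{-n}$-type bounds; summing over $\mathbf{b},\mathbf{c}$ and $\sigma$ and pushing through the arithmetic with the stated parameter choices yields the claimed $\delta^{-1} e^{-\varepsilon_0\varepsilon_1 n/400}$. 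The factor $\delta^{-1}$ appears precisely because the nonlinearity is of size $\delta$, so near-collisions are $\delta^{-1}$ times more likely than in a "generic" problem.

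\textbf{Main obstacle.} The hard part will be Step 2 together with the probability estimate in Step 3: making rigorous that the dominant contribution to $\ln\zeta_{\mathbf{A},j}(\mathbf{b}) - \ln\zeta_{\mathbf{A},j}(\mathbf{c})$ genuinely behaves like a non-degenerate piecewise-linear function of the digits (so that it spreads its values across scale $\sigma$ for most $\mathbf{A}$), while carefully tracking that the discontinuities of $\Phi$ at $1/4$ and $3/4$ — which is exactly why we excise $\{\psi(0),\psi(1/2)\}$ — only create a controlled exceptional set rather than destroying the estimate. The bookkeeping of the nested scale windows $[e^{-4\varepsilon_0 n}, e^{-\varepsilon_0\varepsilon_1 n/2}]$ versus $2^{-n}$ versus the error $e^{\alpha\delta n}$ from Lemma 2.1, and checking that the constraint $\delta < \varepsilon_0\varepsilon_1/2000$ makes every inequality go the right way, is where the real work lies; everything else is the standard BD17 linearization scheme adapted to this explicit sawtooth perturbation.
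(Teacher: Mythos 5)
Your outer bookkeeping (dyadic net in $\sigma$, Markov's inequality, union bound over $j$, and the expectation that the $\delta^{-1}$ comes from the nonlinearity being of size $\delta$) matches the paper's Lemmas 5.5--5.7 (the reduction $\ln(4^ng'_{\mathbf{a}\mathbf{b}}) = \text{const} - \delta S_{2n}\Phi\circ g_{\mathbf{a}\mathbf{b}}$, the factor $\delta^{-1}e^{2\delta n}$, the decomposition over scales $e^{-l}$, and regularity of a block via regularity of the consecutive couples $(\mathbf{a}_{j-1},\mathbf{a}_j)$). But the core anticoncentration step is missing, and the two heuristics you offer for it do not work. In Step 2 you claim that for a generic fixed $\mathbf{A}$ the first $cn$ digits of $\mathbf{b}$ already move $\ln\zeta_{\mathbf{A},j}(\mathbf{b})$ on a scale $\gg\sigma$; this is false: writing $\ln\zeta_{\mathbf{A},j}(\mathbf{b})$ as $-\delta\sum_{m}\Phi(g_{\sigma^m(\mathbf{a}_{j-1}\mathbf{b})}(x_{\mathbf{a}_j}))$, \emph{every} digit $b_i$ perturbs the sum at the same scale $\sim\delta$ (the term where $b_i$ is the leading letter contributes $O(1)$, the earlier terms a convergent geometric tail), so there is no hierarchy of scales in the \emph{values} that would cap collisions digit by digit. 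In Step 3 you claim that for fixed $\mathbf{b}\neq\mathbf{c}$ the difference ranges, as $\mathbf{A}$ varies, over a set of size $\simeq\delta\,2^{-(\text{agreement length})}$, giving a collision probability $\lesssim\sigma/\delta$; this is also not true uniformly: the variation of the difference in $\mathbf{A}$ is governed by the spatial derivative $\big(S_{2n}\Phi\circ g_{\mathbf{a}\mathbf{b}}-S_{2n}\Phi\circ g_{\mathbf{a}\mathbf{c}}\big)'(x)=\sum_j \pm\,\kappa^j(x)$, a signed sum of geometric weights whose signs are the digits, and for "near-symmetric" pairs $(\mathbf{b},\mathbf{c})$ this sum can cancel, making the difference essentially constant in $\mathbf{A}$ and the claimed probability bound wrong. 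Showing that such degenerate pairs are rare, uniformly in the base point, is exactly the hard content you have deferred.

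The paper fills precisely this hole with two ingredients you don't have. First, Lemma 5.2: a self-similar induction (splitting off the first two signs and showing at most three of the four sign patterns can hit the target window, giving $(3/4)^{n/2}+\sigma^{\alpha_0}e^{2\alpha_0\delta n}$) which proves a \emph{uniform-in-$x$} anticoncentration for the signed sums $\sum_i\widehat{\Phi}_i\kappa^i_{\widehat{\Phi}}(x)$ — i.e.\ for the derivative $(S_n\Phi\circ g_{\mathbf{b}})'$, not the value. Second, the Bourgain--Dyatlov device (Lemmas 5.3--5.4): introduce the evaluation word $\mathbf{d}$, split it as $\tilde{\mathbf{d}}\widehat{\mathbf{d}}$ with $2^{\tilde n}\simeq\sigma^{-1/2}$, and convert the derivative lower bound $\geq\sigma^{1/10}$ on the cylinder $S_{\tilde{\mathbf{d}}}$ into value separation at the $\gtrsim\sigma^{-1/2}$-separated points $x_{\tilde{\mathbf{d}}\widehat{\mathbf{d}}}$. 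Note also that in the paper the averaging variable is the evaluation point $\mathbf{d}=\mathbf{a}_j$ (the estimate of Lemma 5.5 is uniform in $\mathbf{a}_{j-1}$), whereas your Step 3 tries to average over all of $\mathbf{A}$ with a per-pair probability bound that, as explained, fails for the degenerate pairs. So the proposal is the right scaffolding around a gap at exactly the place where the real argument lives.
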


This allows us to conclude the proof of Theorem 1.6: indeed, by Lemma 3.1, we already know that
$$ \left| \int_0^1 e^{i \xi \psi(x)} dx \right|^2 \lesssim e^{- \varepsilon_0 \delta_\mu n/4} + 2^{-(k+1)n} \sum_{\mathbf{A} \in \{0,1\}^{(k+1)n}} \sup_{\eta \in [e^{\varepsilon_0 n/2}, e^{2 \varepsilon_0 n}]} 2^{-kn} \left| \sum_{\mathbf{B} \in \{0,1\}^{kn}} e^{ i \eta \zeta_{\mathbf{A},1}(\mathbf{b}_1) \dots \zeta_{\mathbf{A},k}(\mathbf{b}_k) } \right| .$$

Using the previous bound yields:

$$ \left| \int_0^1 e^{i \xi \psi(x)} dx \right|^2 \leq C_{\delta}\left( e^{- \varepsilon_0 \delta_\mu n/4} +  e^{-\varepsilon_0 \varepsilon_1 n/400} \right) $$ $$ + 2^{-{(k+1)n}} \sum_{\mathbf{A} \in \mathcal{R}_n^{k+1}} \sup_{\eta \in [e^{\varepsilon_0 n/2}, e^{2 \varepsilon_0 n}]} 2^{-kn} \left| \sum_{\mathbf{B} \in \{0,1\}^{kn}} e^{ i \eta \zeta_{\mathbf{A},1}(\mathbf{b}_1) \dots \zeta_{\mathbf{A},k}(\mathbf{b}_k) } \right|.$$

We then use that all regular blocks $\mathbf{A}$ produces maps $\zeta_{\mathbf{A},j}$ that all satisfies the non concentration hypothesis required to apply Theorem 4.2. This gives the exponential bound:

$$  \left| \int_0^1 e^{i \xi \psi(x)} dx \right|^2 \leq C_{\delta}\left( e^{- \varepsilon_0 \delta_\mu n/4} +  e^{-\varepsilon_0 \varepsilon_1 n/400} + e^{-\varepsilon_0 \varepsilon_1 n/2} \right). $$

Notice the following interesting fact: since $\delta_\mu$ approach one as the perturbation gets smaller, we see (in our particular case of a carefully chosen perturbation of the doubling map) that the exponent of decay might be chosen \underline{constant in $\delta$} if $\delta$ is small enough.

Recalling that $n := \left\lfloor \frac{(\ln |\xi|)}{ (2k+2) \ln 2 - \varepsilon_0} \right\rfloor$ then gives 
$$ \left| \int_0^1 e^{i \xi \psi(x)} dx \right| \leq C_\delta |\xi|^{-\rho} $$

For some $\rho>0$, constant in $\delta$. The fact that we get a constant $C_\delta \geq C \delta^{-1}$ in front of our power decay is an artefact of our method: the sum product estimates needs some nonlinearity to holds, and $f_\delta$ is \say{more linear} as $\delta$ approaches zero. This answers, in our very particular case, a question found in $\cite{SS20}$ about the dependence of the exponent $\rho$ on the dynamics.

\section{The non-concentration estimates}

In this section we recall the explicit family of perturbations of the doubling map that allows us to check the non-concentration hypothesis. For any periodic function $\Phi : \mathbb{S} \rightarrow \mathbb{R}$, we are going to construct a perturbation $ f $ of the doubling map so that $\ln f' = c_0 + c_1 \cdot \Phi$.

\begin{definition}

Let $\Phi:\mathbb{S} \rightarrow \mathbb{R}$ be $1$-periodic, $1$-Lipschitz, and with absolute value bounded by $1$. Then, for $\delta\in (0,1)$, set
$$ \varphi_\delta(x) := z_\delta \int_0^x e^{\delta \Phi(t)} dt. $$
where $$ z_\delta^{-1} := \int_{0}^{1} e^{\delta \Phi(t)} dt .$$

We see that $\varphi$ is a perturbation of the identity that factors into a $C^{1+\text{Lip}}$-diffeomorphism of the circle. More precisely, there exists a constant $C>0$ such that $ \| \varphi_\delta - I_d \|_{C^{1+\text{Lip}}} \leq C \delta. $ Moreover, for all $x$, $ x e^{- 2 \delta} \leq \varphi_\delta(x) \leq x e^{2\delta} $, and $ e^{-2 \delta} \leq \varphi_\delta'(x) \leq e^{2 \delta}$.

\end{definition}

\begin{definition}

Our perturbation of the doubling map is defined as follows: for some \underline{fixed} $\delta > 0$, set $$ f_\delta(x) := \varphi_\delta(2x) .$$
The parameter $\delta$ will be taken small enough at the end of the section. Notice that $\varphi_0(x)=x$ and so $f_0$ is the doubling map. We will omit $\delta$ and write $f,\varphi$ instead of $f_\delta, \phi_\delta$ for the rest of the section. We define the inverse branches for $a \in \{0,1\}$ by $ g_a(x) := g_a^{(0)}(\varphi^{-1}(x)) $. We define, for a word $\mathbf{a} \in \{0,1\}^n$, $g_{\mathbf{a}} := g_{a_1 \dots a_n}$. Finally, set $S_\mathbf{a} := g_{\mathbf{a}}(\mathbb{S})$.

\end{definition}

\begin{remark}

Notice that $f$ was constructed such that $S_0 = [0,1/2)$ and $S_1 =[1/2,1)$. 
\end{remark}

\begin{lemma}
Let $\mathbf{a} \in \{0,1\}^n$.
Then $$ - \ln g_{\mathbf{a}}' =  n (\ln 2 + \ln z_\delta) + \delta \cdot S_n \Phi \circ g_{\mathbf{a}} ,$$

where $S_n \Phi := \sum_{k=0}^{n-1} \Phi \circ f^k$. In particular, $2^n g_{\mathbf{a}}' \in [ e^{- 2 \delta n}, e^{2 \delta n} ]$, and $\left|\frac{g_{\mathbf{a}}'(x)}{g_{\mathbf{a}}'(y)} - 1\right| \leq 3 \delta e^{2 \delta n} |x-y| $.

\end{lemma}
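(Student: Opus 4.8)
The plan is to unwind the definitions and reduce everything to properties of the Birkhoff sums $S_n\Phi$ and the chain rule for the inverse branches. First I would establish the identity for $-\ln g_{\mathbf{a}}'$. Recall that $g_a(x) = g_a^{(0)}(\varphi^{-1}(x))$, so $g_a'(x) = (g_a^{(0)})'(\varphi^{-1}(x)) \cdot (\varphi^{-1})'(x) = \tfrac12 (\varphi^{-1})'(x)$. Since $f = \varphi(2\,\cdot\,)$ we have $f'(x) = 2\varphi'(2x)$, hence $\ln f'(x) = \ln 2 + \delta\Phi(2x)$; but by Definition 5.2 the point $2x$ reduces mod $1$ on the circle, and one checks $\Phi(2x) = \Phi(f(x))$ is not quite what is wanted — rather, directly from $\varphi_\delta'(x) = z_\delta e^{\delta\Phi(x)}$ one gets $f'(x) = 2 z_\delta e^{\delta \Phi(2x)}$, so $\ln f'(x) = \ln 2 + \ln z_\delta + \delta \Phi(2x \bmod 1)$. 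Then the standard telescoping for inverse branches, $-\ln g_{\mathbf{a}}'(x) = \sum_{k=0}^{n-1} \ln f'\big(f^k(g_{\mathbf{a}}(x))\big)$, combined with the fact that $f^k \circ g_{\mathbf{a}} = g_{a_{k+1}\dots a_n}$ maps into the circle appropriately, yields exactly
$$ -\ln g_{\mathbf{a}}'(x) = \sum_{k=0}^{n-1}\Big(\ln 2 + \ln z_\delta + \delta\,\Phi\big(f^k(g_{\mathbf{a}}(x))\big)\Big) = n(\ln 2 + \ln z_\delta) + \delta\, S_n\Phi(g_{\mathbf{a}}(x)), $$
which is the claimed formula. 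I would write $\Phi(2x \bmod 1)$ as $\Phi(f_0(\text{proj}))$ carefully, but since $\Phi$ is $1$-periodic this is harmless.

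Next, for the bound $2^n g_{\mathbf{a}}' \in [e^{-2\delta n}, e^{2\delta n}]$: from the identity, $\ln(2^n g_{\mathbf{a}}'(x)) = -n\ln z_\delta - \delta S_n\Phi(g_{\mathbf{a}}(x))$. Using $|\Phi|\le 1$ we get $|S_n\Phi|\le n$, and since $z_\delta^{-1} = \int_0^1 e^{\delta\Phi}\,dt \in [e^{-\delta}, e^{\delta}]$ we get $|\ln z_\delta| \le \delta$. Hence $|\ln(2^n g_{\mathbf{a}}'(x))| \le \delta n + \delta n = 2\delta n$, giving the two-sided bound. (One could even get a cleaner bound but $2\delta n$ suffices and matches the statement.)

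For the distortion estimate $\big|\tfrac{g_{\mathbf{a}}'(x)}{g_{\mathbf{a}}'(y)} - 1\big| \le 3\delta e^{2\delta n}|x-y|$, I would subtract the log-derivative identity at $x$ and $y$:
$$ \ln\frac{g_{\mathbf{a}}'(x)}{g_{\mathbf{a}}'(y)} = -\delta\big(S_n\Phi(g_{\mathbf{a}}(x)) - S_n\Phi(g_{\mathbf{a}}(y))\big) = -\delta\sum_{k=0}^{n-1}\Big(\Phi(g_{\mathbf{a}_{>k}}(x)) - \Phi(g_{\mathbf{a}_{>k}}(y))\Big), $$
where $g_{\mathbf{a}_{>k}} := g_{a_{k+1}\dots a_n}$ has derivative bounded by $2^{-(n-k)}e^{2\delta(n-k)} \le 2^{-(n-k)}e^{2\delta n}$ by the previous bullet. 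Since $\Phi$ is $1$-Lipschitz, each term is at most $2^{-(n-k)}e^{2\delta n}|x-y|$ in absolute value, and summing the geometric series $\sum_{k=0}^{n-1} 2^{-(n-k)} = 1 - 2^{-n} \le 1$ gives $\big|\ln\frac{g_{\mathbf{a}}'(x)}{g_{\mathbf{a}}'(y)}\big| \le \delta e^{2\delta n}|x-y|$. Finally I would convert from $|\ln(1+u)|$ control to $|u|$ control: if $|\ln(1+u)| \le t$ then $|u| \le e^t - 1 \le t e^t$; taking $t = \delta e^{2\delta n}|x-y|$, and noting that for the estimate to be non-trivial we may assume $t$ is bounded (say $t \le 1$, otherwise the RHS $3\delta e^{2\delta n}|x-y| \ge 3t/e^{2\delta n}\cdot\ldots$ is itself $\gtrsim 1$ and the inequality is automatic since the LHS is... actually one should be slightly careful here), giving $|u| \le t e^t \le 3t$ for $t$ small, i.e. the claimed bound. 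The only mild obstacle is this last bookkeeping of converting logarithmic to multiplicative errors with the correct explicit constant $3$; I would handle it by a short case distinction on whether $\delta e^{2\delta n}|x-y|$ is small or large, the large case being trivial because the diameter considerations force $g_{\mathbf{a}}'$ ratios to stay in a controlled range anyway.
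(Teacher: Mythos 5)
Your derivation of the identity and of the bound $2^n g_{\mathbf a}' \in [e^{-2\delta n}, e^{2\delta n}]$ follows the paper's proof essentially verbatim: chain rule along the backward orbit, $\ln f' = \ln 2 + \ln z_\delta + \delta\,\Phi(2\,\cdot)$, then $|\Phi|_\infty \le 1$ and $|\ln z_\delta| \le \delta$. The point you flag about $\Phi(2x)$ versus $\Phi\circ f$ is handled by exactly the same silent identification in the paper, and it is immaterial for how the lemma is used afterwards (only the representation of $-\ln g_{\mathbf a}'$ as $n(\ln 2 + \ln z_\delta)$ plus $\delta$ times a sum of $\Phi$ evaluated along the points $g_{a_j\dots a_n}(x)$ enters the later lemmas), so no objection there.

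The distortion estimate is where your write-up has a genuine, though easily repaired, gap. You bound $\bigl|\ln\bigl(g_{\mathbf a}'(x)/g_{\mathbf a}'(y)\bigr)\bigr| \le t := \delta e^{2\delta n}|x-y|$ and then use $|e^u - 1| \le t e^{t}$, which requires $t$ bounded; your disposal of the case $t$ large (\emph{the ratio stays in a controlled range anyway}) does not stand as stated: the only a priori range available at that point is $g_{\mathbf a}'(x)/g_{\mathbf a}'(y) \in [e^{-4\delta n}, e^{4\delta n}]$, and $e^{4\delta n}-1$ can vastly exceed $3\delta e^{2\delta n}|x-y|$ when $\delta n$ is large, so the inequality is not automatic in that regime. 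Two immediate fixes: (i) order the estimates as the paper does, namely note from the first part that $|u| := |\ln g_{\mathbf a}'(x) - \ln g_{\mathbf a}'(y)| \le 2\delta n$, so $|e^u - 1| \le e^{2\delta n}|u|$ unconditionally, and then bound $|u| \le \delta \sum_j \mathrm{Lip}(g_{a_j\dots a_n})\,|x-y| \le 2\delta |x-y|$, which yields $2\delta e^{2\delta n}|x-y| \le 3\delta e^{2\delta n}|x-y|$ with no case distinction; or (ii) keep the geometric factors $(e^{2\delta}/2)^{\,n-j+1}$ in your sum instead of majorizing each by $2^{-(n-j+1)}e^{2\delta n}$, so that $t \le 2\delta|x-y|$ is always small and your conversion $|e^u-1| \le t e^t$ closes directly. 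With either repair your argument coincides with the paper's.
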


\begin{proof}

We see that
$$ \ln g_{\mathbf{a}}' = - \ln (f^n)'\circ g_{\mathbf{a}} =  -\sum_{k=0}^{n-1} (\ln f') \circ f^k \circ g_{\mathbf{a}} .$$
Moreover, $\ln f'(x) = \ln \varphi'(2x) + \ln 2$, and $\ln \varphi' = \ln z_\delta + \delta \cdot \Phi$. The first estimate is easy since $|\Phi|_\infty \leq 1$ and $|\ln z_\delta| \leq \delta$. The second estimate can be checked as follows:

$$ \left|\frac{g_{\mathbf{a}}'(x)}{g_{\mathbf{a}}'(y)} - 1\right|  = \left| e^{\ln g_{\mathbf{a}}'(x) - \ln g_\mathbf{a}'(y) } - 1\right|  \leq e^{ 2 n \delta} \left| \ln g_{\mathbf{a}}'(x) - \ln g_\mathbf{a}'(y)\right| $$ $$ \leq \delta e^{2 n \delta} \sum_{j=1}^n \left| \Phi(g_{a_j \dots a_n} x) -\Phi(g_{a_j \dots a_n} y) \right|  \leq \delta e^{ 2 \delta n} \sum_{j=1}^n (2/3)^{n} |x-y| \leq 2 \delta e^{2 \delta n} |x-y| .$$ \end{proof}

\begin{definition}
We let $\Phi:\mathbb{R} \rightarrow \mathbb{R}$ be the $1$-periodic, Lipschitz and bounded by one function defined by
$$ \forall x \in [0,1/2], \ \Phi(x) := x - \frac{1}{4} \ , \ \text{and} \ \forall x \in [1/2,1], \ \Phi(x) := \frac{3}{4}-x .$$

It is differentiable on $\mathbb{R} \setminus \frac{1}{2} \mathbb{Z} $, with derivative $1$ on $\mathbb{S}_0$ and $-1$ on $\mathbb{S}_1$. In particular, $\Phi' \circ g_a$ is naturally extended as a constant map on $[0,1]$.

\end{definition}

Our goal is to prove Lemma 4.2 for this choice of $\Phi$. The idea of the proof can be stated in two main steps.
\begin{enumerate}
    \item The nonconcentration hypothesis can be rewritten in terms of a non-concentration estimate involving Birkhoff sums involving $\Phi$, namely $S_n \Phi \circ g_{\mathbf{a}}$.
    \item To check that those Birkhoff sums doesn't concentrate too much, we show that the derivatives $\left(S_n \Phi \circ g_{\mathbf{a}} - S_n \Phi \circ g_{\mathbf{b}} \right)'$ are often away from zero.
\end{enumerate}
We begin by step 2. To this end, the following preliminary lemma is helpful. 

\begin{lemma}
 Suppose that $\delta<1/200$. Let $E$ be a non-empty set. Let $n \geq 1$, and denote by $P(n)$ the following property: 
\begin{itemize}
    \item Let $\sigma>0$ be a scale factor. For all $x \in E$, for all $i \in \llbracket 1,n \rrbracket$, and for all $\widehat{\Phi} \in \{-1,1\}^i$, choose $\rho_{i}(\widehat{\Phi},x) \in \ ] \frac{1}{2} e^{-2 \delta},\frac{1}{2} e^{2 \delta} [ $ with small fluctuations:
    $$ \forall x,y \in E, \ \left|\rho_i(\widehat{\Phi},x) - \rho_i(\widehat{\Phi},y) \right| \leq \left( \frac{e^{2 \delta}}{2} \right)^{i} \sigma^3 .$$ For any $j \geq i$ and any word $\widehat{\Phi} \in \{0,1\}^j$, set $\kappa^i_{\widehat{\Phi}}(x) := \rho_1(\widehat{\Phi}_1,x) \rho_2(\widehat{\Phi}_1 \widehat{\Phi}_2,x) \dots \rho_i(\widehat{\Phi}_1 \dots \widehat{\Phi}_i,x)$. Define the map $X_n^x : \{-1,1\}^n \longrightarrow \mathbb{R}$ by
$$ X_n^x(\widehat{\Phi}) := \sum_{i=1}^n {\widehat{\Phi}_i} \cdot {\kappa^i_{\widehat{\Phi}}}(x). $$
Finally, choose a target $a : E \rightarrow \mathbb{R}$ with small fluctuations: $\forall x,y \in E, |a(x)-a(y)| \leq \sigma^3$. Then, the following uniform anti-concentration estimate holds:
$$ 2^{-n} \# \left\{ \widehat{\Phi} \in \{-1,1\}^{n} \ | \ \exists x \in E, \ X_n^x(\widehat{\Phi}) \in [a(x)-\sigma,a(x)+\sigma] \right\} \leq \left(\frac{4}{3}\right)^2 \sigma^{\alpha_0} e^{2 \alpha_0 \delta n} + 2 \left( \frac{3}{4} \right)^{n/2}. $$  
where $\alpha_0 := 1- \frac{\ln 3}{\ln 4} > 1/5$.
\end{itemize}
Then $P(n)$ is true.
\end{lemma}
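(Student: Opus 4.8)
The plan is to prove $P(n)$ by induction on $n$, exploiting the recursive structure of $X_n^x$: splitting according to the first symbol $\widehat{\Phi}_1 \in \{-1,1\}$, one sees that the "tail" $\sum_{i=2}^n \widehat{\Phi}_i \kappa^i_{\widehat{\Phi}}(x)$ is, up to the overall scalar factor $\rho_1(\widehat{\Phi}_1,x)$, again an expression of the same form but with $n-1$ symbols and with the maps $\rho_{i}$ reindexed and the scalars renormalised. The key algebraic identity to record first is
$$ X_n^x(\widehat{\Phi}) = \widehat{\Phi}_1 \rho_1(\widehat{\Phi}_1,x) + \rho_1(\widehat{\Phi}_1,x)\, Y_{n-1}^{x}(\widehat{\Phi}_2 \dots \widehat{\Phi}_n), $$
where $Y_{n-1}^x$ has exactly the shape of an $X_{n-1}^x$ built from the shifted data $\widetilde{\rho}_i(\cdot,x) := \rho_{i+1}(\widehat{\Phi}_1 \,\cdot\,,x)$. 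The point is that the hypotheses on fluctuations are \emph{scale-invariant} in precisely the way needed: dividing a target $a(x)$ by $\rho_1(\widehat{\Phi}_1,x) \in ]\tfrac12 e^{-2\delta},\tfrac12 e^{2\delta}[$ and subtracting $\widehat{\Phi}_1$ turns the event "$X_n^x(\widehat{\Phi}) \in [a(x)-\sigma,a(x)+\sigma]$" into an event of the form "$Y_{n-1}^x \in [\widetilde a(x) - \widetilde\sigma, \widetilde a(x)+\widetilde\sigma]$" with $\widetilde\sigma \le 2 e^{2\delta}\sigma$ and a new target $\widetilde a$ whose fluctuations are controlled by $\widetilde\sigma^3$ up to harmless constants (this is where $\delta < 1/200$ and the exact exponents $3$ on $\sigma$ in the fluctuation bounds are used — one needs a little room so that $(2e^{2\delta})^3 \sigma^3$ times the worst constant still fits inside a slightly enlarged scale).

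For the base case $n=1$: $X_1^x(\widehat\Phi_1) = \widehat\Phi_1 \rho_1(\widehat\Phi_1,x)$ takes one value in a neighbourhood of $+\tfrac12$ and one in a neighbourhood of $-\tfrac12$; since $\sigma$ is small (we may assume $\sigma$ small, otherwise the bound is trivial because $(4/3)^2 \sigma^{\alpha_0} \ge 1$), the $+1$ choice and the $-1$ choice cannot simultaneously hit a common target interval $[a(x)-\sigma,a(x)+\sigma]$, and even accounting for the $x$-fluctuations at most one of the two symbols is "bad", giving a proportion $\le 1/2$, which is comfortably below $(4/3)^2 \sigma^{\alpha_0} e^{2\alpha_0\delta} + 2(3/4)^{1/2}$ for the relevant range of $\sigma$ (and for larger $\sigma$ the RHS exceeds $1$). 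For the inductive step, condition on $\widehat\Phi_1$: the set of bad words with first letter $\widehat\Phi_1$ has, by $P(n-1)$ applied to the shifted data with scale $\widetilde\sigma$, relative density $\le (4/3)^2 \widetilde\sigma^{\alpha_0} e^{2\alpha_0\delta(n-1)} + 2(3/4)^{(n-1)/2}$. Summing the two contributions (each weighted by $1/2$) and plugging $\widetilde\sigma \le 2e^{2\delta}\sigma$ gives a bound $(4/3)^2 (2e^{2\delta})^{\alpha_0} \sigma^{\alpha_0} e^{2\alpha_0\delta(n-1)} + 2(3/4)^{(n-1)/2}$; one then checks that $(2e^{2\delta})^{\alpha_0} e^{-2\alpha_0\delta} = 2^{\alpha_0} e^{0} \cdot(\text{tiny})$ and that $2^{\alpha_0} = 4^{\alpha_0/2} = (4/4^{\ln 3/\ln 4})^{1/2}\cdot\sqrt{\text{something}}$... more simply, $2^{\alpha_0} = 2^{1-\ln 3/\ln 4} = 2 \cdot 3^{-1/2} \cdot \sqrt 2$? — the honest computation is $2^{\alpha_0} = e^{\alpha_0 \ln 2} = e^{(\ln 2 - \frac12\ln 3)} = 2/\sqrt 3 < 4/3$, so the factor gained from one extra level of the product, namely $2^{\alpha_0}\cdot e^{2\alpha_0\delta}$, is $\le (4/3)$ when $\delta$ is small; absorbing this into the $e^{2\alpha_0\delta n}$ factor closes the induction on the first term, while $2(3/4)^{(n-1)/2} \le 2(3/4)^{n/2}\cdot(4/3)^{1/2}$ is dominated after noting $(4/3)^{1/2} < 2$, so a second small adjustment (or simply carrying the factor $2(3/4)^{(n-1)/2}$ and observing it is $\ge$ the target at level $n$ would be the wrong direction — one instead verifies $\tfrac12\cdot 2(3/4)^{(n-1)/2} + \tfrac12 \cdot 2(3/4)^{(n-1)/2} = 2(3/4)^{(n-1)/2}$ and $(3/4)^{(n-1)/2} \le (3/4)^{n/2}\cdot(4/3)^{1/2}$, which forces us to instead keep track of the slightly stronger statement with $(3/4)^{n/2}$ replaced by a constant times it, or re-derive the geometric term more carefully from a telescoping argument).

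The main obstacle, and the place that will need genuine care rather than bookkeeping, is making the constants in the induction actually close: the geometric error term $2(3/4)^{n/2}$ does not shrink by a factor $3/4$ per step under the naive conditioning argument (conditioning on one symbol only replaces $n$ by $n-1$), so one must instead prove a sharper recursive inequality — for instance by peeling off \emph{two} symbols at a time, or by introducing an auxiliary quantity that does contract, or by observing that the event is empty unless $\sigma$ is not too small relative to $n$ and handling the complementary regime by a crude union bound over the $2^n$ words. Concretely, I expect the cleanest route is: run the one-symbol recursion to reduce $\sigma$-power terms, and separately observe that whenever $\sigma < (3/4)^{n/2}$ (say), the crude bound $2^{-n}\cdot 2^n = 1$ combined with... no — rather, handle the geometric term by noting the bad set with first symbol $\widehat\Phi_1$ fixed is contained in the bad set for the tail data, and iterate the \emph{whole way down} to $n=1$, at which point the accumulated $\sigma$-term is a geometric-type sum $\sum_{i} (\text{const})^i \sigma^{\alpha_0} e^{2\alpha_0\delta i}$ which converges (since $\text{const} = 2^{\alpha_0}e^{2\alpha_0\delta} < 1$... it is not $<1$) — so in fact the correct bookkeeping is that the $\sigma$-term is bounded by its value at the \emph{bottom} level times a convergent-ratio correction, and the $2(3/4)^{n/2}$ term is simply the contribution of the levels where the inductive hypothesis becomes vacuous. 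Getting these two competing error terms to balance with the stated constants $(4/3)^2$, $\alpha_0 > 1/5$, and the exponent $3$ on $\sigma$ is the delicate heart of the lemma; everything else is the routine verification that the scale-invariant fluctuation bounds propagate correctly through the rescaling $a \mapsto (a - \widehat\Phi_1)/\rho_1$.
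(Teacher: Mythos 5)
There is a genuine gap: your recursion has no counting gain, and you have not identified the mechanism that produces one. Peeling off a single symbol and conditioning on $\widehat{\Phi}_1$ keeps \emph{both} branches, each weighted by $1/2$, so the best you can conclude from $P(n-1)$ is the maximum of the two branch bounds; after rescaling $\sigma \mapsto \widetilde\sigma \le 2e^{2\delta}\sigma$ the power term picks up the factor $2^{\alpha_0}e^{2\alpha_0\delta} = (2/\sqrt{3})e^{2\alpha_0\delta} > 1$ per level (as your own computation shows), and the geometric term $2(3/4)^{(n-1)/2}$ does not contract at all. So the induction as you set it up cannot close, and the fixes you list (two-symbol peeling, an auxiliary contracting quantity, a crude union bound in a complementary regime) are left as speculation precisely at what you correctly call the delicate heart of the lemma.

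The paper closes the induction by peeling \emph{two} symbols and, crucially, by a geometric exclusion argument that eliminates one of the four sign prefixes. After reducing to $\sigma < 1/8$ and $\|a\|_\infty \le 3$ (otherwise the bound is trivial or the bad set is empty), one writes $X_n^x(\widehat\Phi) = i_1\kappa^1 + i_2\kappa^2 + \kappa^2\,\tilde X^x_{n-2}(\tilde\Phi)$ with $(i_1,i_2)=(\widehat\Phi_1,\widehat\Phi_2)$, and uses $\rho_i \in \left]\tfrac12 e^{-2\delta},\tfrac12 e^{2\delta}\right[$ with $\delta<1/200$ to show that for $i_1=i_2=-1$ the value is always $< -1/4$, and for $i_1=i_2=+1$ always $> 1/4$; since the target $a$ has fluctuations $\le \sigma^3$ and $\sigma<1/8$, at least one of these two prefixes can never hit $[a(x)-\sigma,a(x)+\sigma]$ for any $x\in E$, whichever matches the sign of $a$ at some point of $E$. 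Hence only $3$ of the $4$ two-symbol prefixes survive, giving a factor $3\cdot 2^{-2}$ per two levels. Applying $P(n-2)$ with $\widetilde\sigma = 4e^{4\delta}\sigma$ and the rescaled target $\tilde a = (\kappa^2)^{-1}(a - i_1\kappa^1 - i_2\kappa^2)$ (whose fluctuations $\le 30\sigma^3 \le \widetilde\sigma^3$ are checked directly), the power term closes exactly because $\alpha_0 = 1-\ln 3/\ln 4$ is chosen so that $(3/4)\,4^{\alpha_0}=1$, and the geometric term closes because $3\cdot 2^{-2}\cdot 2(3/4)^{(n-2)/2} = 2(3/4)^{n/2}$. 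Without this exclusion step — which is where the specific constants $(4/3)^2$, $\alpha_0$, the threshold $\sigma<1/8$, and the hypothesis $\delta<1/200$ all enter — your bookkeeping cannot be repaired, so the proposal as written does not prove the lemma.
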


\begin{proof}
The idea is to use the \say{uniform in $x$} fractal geometry of the sets $T_n^x := \{ X_n^x(\widehat{\Phi}) , \widehat{\Phi} \in \{-1,1\}^n \}$ to relate the behavior of $T_n^x$ to the behavior of $T_{j}^x$, $j<n$. We will thus prove that $P(n)$ is true for all $n \in \mathbb{N}^*$ by induction on $n$. If $n=1,2$, the estimate holds since $1 \leq 2 \cdot (3/4)$. Now, let $n \geq 3$ and suppose that the estimate holds for all $j<n$. We notice that
$$ X_n^x(\widehat{\Phi}) = \widehat{\Phi}_1 \kappa_{\widehat{\Phi}}^1(x) + \widehat{\Phi}_2 \kappa_{\widehat{\Phi}}^2(x) + \kappa_{\widehat{\Phi}}^2(x) \tilde{X}_{\widehat{\Phi}_1 \widehat{\Phi}_2,n-2}^x(\tilde{\Phi}) $$
where $\tilde{\Phi}=\widehat{\Phi}_3 \dots \widehat{\Phi}_n \in \{-1,1\}^{n-2}$, and $ \tilde{X}_{\widehat{\Phi}_1 \widehat{\Phi}_2,n-2}^x(\widehat{\Psi}) := \sum_{i=1}^{n-2} \widehat{\Psi}_i \tilde{\kappa}_{\widehat{\Phi}_1 \widehat{\Phi}_2,\widehat{\Psi}}^i(x) $
with $\tilde{\rho}_{\widehat{\Phi}_1 \widehat{\Phi}_2,i}(\widehat{\Psi},x)=\rho_{i+2}(\widehat{\Phi}_1 \widehat{\Phi}_2 \widehat{\Psi},x)$ and $\tilde{\kappa}^i_{\widehat{\Phi}_1 \widehat{\Phi}_2,\widehat{\Psi}}(x) := \tilde{\rho}_{\widehat{\Phi}_1 \widehat{\Phi}_2,1}(\widehat{\Psi},x) \dots \tilde{\rho}_{\widehat{\Phi}_1 \widehat{\Phi}_2,i}(\widehat{\Psi},x)$. The idea is to apply the case $n-2$ with $\tilde{X}_{\widehat{\Phi}_1 \widehat{\Phi}_2,n-2}^x$.
Let $\sigma>0$. Notice that, since $(4/3)^2 (1/8)^{\alpha_0} = 1$, the estimate is true for $\sigma \geq 1/8$. So suppose $\sigma<1/8$. Moreover, notice that since $|X_n^x(\widehat{\Phi})| \leq 2$, we can suppose $\|a\|_\infty \leq 3$: indeed, if this is not the case, then our cardinal is zero. Now that we have reduced the lemma in the interesting cases, write:

$$ 2^{-n} \# \left\{ \widehat{\Phi} \in \{-1,1\}^{n} \ | \ \exists x \in E, \ X_n^x(\widehat{\Phi}) \in [a(x)-\sigma,a(x)+\sigma] \right\} $$ $$ = 2^{-n} \# \left\{ \widehat{\Phi} \in \{-1,1\}^{n} \ | \ \exists x \in E, \  \widehat{\Phi}_1 \kappa_{\widehat{\Phi}}^1(x) + \widehat{\Phi}_2 \kappa_{\widehat{\Phi}}^2(x) + \kappa_{\widehat{\Phi}}^2(x) \tilde{X}_{\widehat{\Phi}_1 \widehat{\Phi}_2,n-2}^x(\tilde{\Phi})  \in [a(x)-\sigma,a(x)+\sigma] \right\} $$
$$ = \sum_{(i_1,i_2) \in \{\pm 1\}} 2^{-n} \# \left\{ \tilde{\Phi} \in \{\pm 1\}^{n-2} | \ \exists x \in E, \  i_1 \kappa_{i_1}^1(x) +  \kappa_{i_1 i_2}^2(x)( i_2 + \tilde{X}_{i_1 i_2,n-2}^x(\tilde{\Phi}))  \in [a(x)-\sigma,a(x)+\sigma] \right\}. $$
We then make the following claim: of all the four combinations possibles for $(i_1,i_2)$, only three of them allow the existence of some $x \in E$ for which $i_1 \kappa_{i_1}^1(x) + i_2 \kappa_{i_1 i_2}^2(x) + \kappa_{i_1 i_2}^2(x) \tilde{X}_{i_1 i_2,n-2}^x(\tilde{\Phi}) \in [a(x)-\sigma,a(x)+\sigma]$.
Indeed, suppose for example that there exists $x_0 \in E$ such that $a(x_0)=0$ and $i_1=i_2 = -1$. In this case, $a(x)-\sigma>-1/4$ for all $x \in E$ (recall that $\sigma<1/8$). We notice that
$$ \forall x \in E, \ i_1 \kappa_{i_1}^1(x) + i_2 \kappa_{i_1 i_2}^2(x) + \kappa_{i_1 i_2}^2(x) \tilde{X}_{i_1 i_2,n-2}^x(\tilde{\Phi}) \leq - \frac{e^{-2\delta}}{2} - \frac{e^{-4\delta}}{4} + \frac{e^{4\delta}}{4} \sum_{i=1}^{n-2} \left(\frac{e^{2\delta}}{2}\right)^i  $$
$$ \leq - \frac{e^{-2\delta}}{2} - \frac{e^{-4\delta}}{4} + \frac{e^{6\delta}}{8} \frac{1}{1-\frac{e^{2\delta}}{2}} < -1/4 $$
since $\delta<1/200$. It follows that $i_1 \kappa_{i_1}^1(x) + i_2 \kappa_{i_1 i_2}^2(x) + \kappa_{i_1 i_2}^2(x) \tilde{X}_{i_1 i_2,n-2}^x(\tilde{\Phi})$ can never belong to $[a(x)-\sigma,a(x)+\sigma]$. \emph{A fortiori}, it can never meet $[a(x)-\sigma,a(x)+\sigma]$ as soon as there exists some $x_0 \in E$ such that $a(x_0) \geq 0$. Symmetrically, $[a(x)-\sigma,a(x)+\sigma]$ will never meet the terms with $i_1=i_2=1$ if there exists $x_0$ for which $a(x_0) \leq 0$. \\

We can then conclude the computation by justifying that $P(n-2)$ applies. Our scale factor will be $\tilde{\sigma} := 4 e^{4 \delta} \sigma$, and the target $\tilde{a}_{i_1 i_2}(x) := \left(\kappa^{2}_{i_1 i_2}(x)\right)^{-1} \left( a(x) -i_1 \kappa_{i_1}^1(x) - i_2 \kappa_{i_1 i_2}^2(x) \right)$. Notice that
$$ \forall x,y \in E, \left| \left( a(x) -i_1 \kappa_{i_1}^1(x) - i_2 \kappa_{i_1 i_2}^2(x) \right)-\left( a(y) -i_1 \kappa_{i_1}^1(y) - i_2 \kappa_{i_1 i_2}^2(y) \right) \right| \leq 3 \sigma^3  $$
and $|a(x) -i_1 \kappa_{i_1}^1(x) - i_2 \kappa_{i_1 i_2}^2(x)| \leq 5$, which gives the bound $|\tilde{a}_{i_1 i_2}(x) - \tilde{a}_{i_1 i_2}(y)| \leq 30 \sigma^3 = 30 (\frac{\tilde{\sigma}}{4 e^{4 \delta}})^3 \leq \tilde{\sigma}^3$. We also have to check that the $\tilde{\rho}_{i_1 i_1, i}$ have small fluctuations:
$$ \forall \widehat{\Psi}, \ \forall x,y \in E, \ \left| \tilde{\rho}_i(\widehat{\Psi},x) - \tilde{\rho}_i(\widehat{\Psi},y) \right|  = \left| {\rho}_{i+2}(i_1 i_2 \widehat{\Psi},x) - {\rho}_{i+2}(i_1 i_2 \widehat{\Psi},y) \right| $$ $$ \leq \left( \frac{e^{2 \delta}}{2} \right)^{i+2} \sigma^3 \leq \left( \frac{e^{2 \delta}}{2} \right)^{i} \tilde{\sigma}^3 .$$
We can then safely apply $P(n-2)$.
The desired estimate follows by using the previous claim and the induction hypothesis:

$$ 2^{-n} \# \left\{ \widehat{\Phi} \in \{-1,1\}^{n} \ | \exists x \in E, \ X_n^x(\widehat{\Phi}) \in [a(x)-\sigma,a(x)+\sigma] \right\} $$
$$  = 2^{-n} \sum_{(i_1,i_2) \in \{\pm 1\}^2} \# \Big{\{} \tilde{\Phi} \in \{\pm 1\}^{n-2}| \exists x \in E, \   \tilde{X}_{i_1 i_2,n-2}^x(\tilde{\Phi}) - \tilde{a}_{i_1 i_2}(x) \in [- \left(\kappa^{2}_{i_1 i_2}(x)\right)^{-1} \sigma, \left(\kappa^{2}_{i_1 i_2}(x)\right)^{-1} \sigma] \Big{\}} $$ 
$$ \leq \sum_{(i_1,i_2) \in \{-1,1\}} 2^{-n} \# \left\{ \tilde{\Phi} \in \{-1,1\}^{n-2} | \exists x \in E, \  \tilde{X}_{i_1 i_2, n-2}^x(\tilde{\Phi}) \in [\tilde{a}_{i_1 i_2}(x)- 4 e^{4 \delta} \sigma,\tilde{a}_{i_1 i_2}(x)+ 4 e^{4 \delta} \sigma] \right\} $$ $$ \leq 3 \cdot 2^{-2} \left( \left(\frac{4}{3}\right)^2 (4 e^{4 \delta} \sigma)^{\alpha_0} e^{2 \alpha_0 \delta (n-2)} + 2 \left(3/4\right)^{(n-2)/2} \right)  = \left(\frac{4}{3}\right)^2 \sigma^{\alpha_0} e^{2 \alpha_0 \delta n} + 2 \left(\frac{3}{4}\right)^{n/2} .$$ \end{proof}

\begin{lemma}
Let $n$ be large enough. Let $\sigma \in [e^{-5 \varepsilon_0 n},\delta^{-1} e^{-\varepsilon_0 \varepsilon_1 n/3} ]$. Define $ \tilde{n} := \left\lfloor (\log_2 \sigma)/2 \right\rfloor $. This is a slowly increasing zoom factor, scaled so that $2^{-\tilde{n}} e^{2 \delta n} \simeq \sqrt{\sigma} e^{2 \delta n} \leq ({\sigma}^{1/10})^3$. Fix any word $\mathbf{a} \in \{0,1\}^n$. The following bound holds:
$$ 2^{-2n-\tilde{n}}\#\{ (\mathbf{b},\mathbf{c}) \in (\{ 0,1 \}^{n})^2, \mathbf{d} \in \{0,1\}^{\tilde{n}}| \exists x \in S_{\mathbf{d}}, \left|\left(S_{2n} \Phi \circ g_{\mathbf{a} \mathbf{b}} - S_{2n} \Phi \circ g_{\mathbf{a} \mathbf{c}}\right)'(x)\right| \leq \sigma^{1/10} \} \leq \delta^{-1/2} \sigma^{1/50} . $$

\end{lemma}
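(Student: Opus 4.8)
The plan is to reduce the stated counting estimate for Birkhoff-sum derivatives to an application of Lemma 5.7. First I would compute the derivative $\left(S_{2n}\Phi\circ g_{\mathbf a\mathbf b}\right)'(x)$ explicitly. Writing $S_{2n}\Phi\circ g_{\mathbf a\mathbf b} = \sum_{k=0}^{2n-1}\Phi\circ f^k\circ g_{\mathbf a\mathbf b}$, only the terms $k=0,\dots,2n-1$ survive, and since $f^k\circ g_{\mathbf a\mathbf b}$ equals $g_{a_{k+1}\dots a_n b_1\dots b_n}$ (the appropriate sub-branch), the chain rule gives a sum $\sum_{k} \Phi'(\cdot)\, (g_{w_k})'(x)$ where $w_k$ ranges over suffixes of $\mathbf a\mathbf b$. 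Crucially, $\Phi' \equiv \pm 1$ is locally constant with the sign determined by whether the first letter of the relevant suffix is $0$ or $1$, so this sign is a function of the \emph{letters of the word}, not of $x$. Thus the derivative takes exactly the form $\sum_{i} \widehat\Phi_i\,\kappa^i_{\widehat\Phi}(x)$ appearing in Lemma 5.7, with $\rho_i$ equal to renormalized one-step derivative factors $\tfrac12 e^{\pm\delta\Phi}$-type quantities lying in $]\tfrac12 e^{-2\delta},\tfrac12 e^{2\delta}[$, and $\widehat\Phi_i\in\{-1,1\}$ the sign read off from letter $i$ of the word. The small-fluctuation hypotheses on the $\rho_i$ follow from the Lipschitz bound $|g_{\mathbf a}'(x)/g_{\mathbf a}'(y)-1|\le 3\delta e^{2\delta n}|x-y|$ of Lemma 5.4, using that $x,y$ both lie in a cylinder $S_{\mathbf d}$ of diameter $\lesssim 2^{-\tilde n}\sim\sqrt\sigma$, which is why $\tilde n$ is chosen so that $2^{-\tilde n}e^{2\delta n}\le(\sigma^{1/10})^3$.

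Next I would set up the counting. I want to bound the number of triples $(\mathbf b,\mathbf c,\mathbf d)$ such that the difference of the two derivatives, evaluated somewhere on $S_{\mathbf d}$, is within $\sigma^{1/10}$ of zero. Fixing $\mathbf b$ (there are $2^n$ choices), the difference $\left(S_{2n}\Phi\circ g_{\mathbf a\mathbf b}-S_{2n}\Phi\circ g_{\mathbf a\mathbf c}\right)'$ as a function of the word $\mathbf c$ and the point is, after extracting the common $\mathbf a$-part and renormalizing by $4^{?}$, again of the form $X^x_n(\widehat\Phi)$ minus a target $a(x)$ that depends on $\mathbf a,\mathbf b$ (the $\mathbf b$-contribution plays the role of the shift $a(x)$, with the required small fluctuation over $x\in E:=S_{\mathbf d}$ coming once more from Lemma 5.4). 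Taking $E=S_{\mathbf d}$ and the scale $\sigma'$ in Lemma 5.7 to be a fixed multiple of $\sigma^{1/10}$, and letting the word length there be $n$, Lemma 5.7 yields
$$ 2^{-n}\#\{\mathbf c\in\{0,1\}^n\mid \exists x\in S_{\mathbf d},\ |(\cdots)'(x)|\le\sigma^{1/10}\}\ \lesssim\ (\sigma^{1/10})^{\alpha_0}e^{2\alpha_0\delta n}+2(3/4)^{n/2} $$
with $\alpha_0>1/5$. Summing over the $2^n$ values of $\mathbf b$ and the $2^{\tilde n}$ values of $\mathbf d$ and dividing by $2^{2n+\tilde n}$ leaves exactly the right-hand bound $(\sigma^{1/10})^{\alpha_0}e^{2\alpha_0\delta n}+2(3/4)^{n/2}$, which I then need to absorb into $\delta^{-1/2}\sigma^{1/50}$.

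The last step is bookkeeping on exponents. Since $\alpha_0>1/5$ we have $(\sigma^{1/10})^{\alpha_0}\le\sigma^{1/50}$, so the first term is fine provided the factor $e^{2\alpha_0\delta n}$ is controlled: here one uses $\sigma\le\delta^{-1}e^{-\varepsilon_0\varepsilon_1 n/3}$ together with $\delta<\varepsilon_0\varepsilon_1/2000$ to check that $e^{2\alpha_0\delta n}\le\delta^{-1/2}\sigma^{\text{(small loss)}}$, so that the product still sits below $\delta^{-1/2}\sigma^{1/50}$ (adjusting $1/50$ to a slightly worse but still positive exponent, or redoing the arithmetic to land exactly on $1/50$ — the stated range $\sigma\in[e^{-5\varepsilon_0 n},\delta^{-1}e^{-\varepsilon_0\varepsilon_1 n/3}]$ is exactly what makes this work). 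For the second term $2(3/4)^{n/2}$, one uses the lower bound $\sigma\ge e^{-5\varepsilon_0 n}$ with $\varepsilon_0=1/20$ small to see $(3/4)^{n/2}$ decays faster than any fixed positive power $\sigma^{1/50}$, hence is $\lesssim\delta^{-1/2}\sigma^{1/50}$ for $n$ large. The main obstacle is the first part: verifying cleanly that the derivative of the Birkhoff sum, once renormalized, really does match the combinatorial structure $X^x_n$ of Lemma 5.7 — that the signs coming from $\Phi'=\pm1$ are genuinely the free $\pm1$ variables, that the remaining factors are products of per-letter terms in the required window $]\tfrac12 e^{-2\delta},\tfrac12 e^{2\delta}[$, and that all the "small fluctuation over $x\in S_{\mathbf d}$" hypotheses hold with room to spare given the choice of $\tilde n$; the exponent arithmetic at the end is routine once the reduction is in place.
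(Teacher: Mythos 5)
Your proposal is correct and follows essentially the same route as the paper: drop the (negligible, of size $O(2^{-n}e^{2\delta n})\le\sigma^{1/10}$) suffix terms involving $\mathbf{a}$ at the cost of widening the window, identify $(S_n\Phi\circ g_{\mathbf{b}})'$ with the combinatorial form $X_n^x$ of the induction lemma (with $E=S_{\mathbf{d}}$, the other word's Birkhoff derivative as the target, and the fluctuation hypotheses checked via the distortion lemma and the choice of $\tilde n$), then apply it for each fixed word and cylinder and do the exponent bookkeeping. The only cosmetic differences are that you swap the roles of $\mathbf{b}$ and $\mathbf{c}$ and hesitate about a $4^{n}$-renormalization, which is in fact not needed at this stage since $\Phi'=\pm1$ and the $\kappa^i$ are already the (decaying) products of one-step inverse-branch derivatives required by the lemma.
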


\begin{proof}

We are going to reduce our bound to the previous lemma. Notice first that, for fixed words $\mathbf{a},\mathbf{b}$ and $\mathbf{c}$, we can compute the derivative of $S_{2n} \Phi \circ g_{\mathbf{a} \mathbf{b}} - S_{2n} \Phi \circ g_{\mathbf{a} \mathbf{c}} $ as follow:
$$ \left(S_{2n} \Phi \circ g_{\mathbf{a} \mathbf{b}} - S_{2n} \Phi \circ g_{\mathbf{a} \mathbf{c}}\right)' = g_{\mathbf{b}}' \left(S_n \circ g_{\mathbf{a}}\right)' \circ g_\mathbf{b} + \left( S_n \Phi \circ g_{\mathbf{b}} \right)' - g_{\mathbf{c}}' \left(S_n \circ g_{\mathbf{a}}\right)' \circ g_\mathbf{c} - \left( S_n \Phi \circ g_{\mathbf{c}} \right)'. $$

We see that the terms involving $\mathbf{a}$ becomes negligible. Indeed, $ \left| \left(S_n \Phi \circ g_{\mathbf{a}} \right)' \right|  \leq2 $, and $|g_{\mathbf{a}}'| \leq 2^{-n} e^{2 \delta n}$, so that
$$ \left| g_{\mathbf{b}}' \left(S_n \circ g_{\mathbf{a}}\right)' \circ g_\mathbf{b} - g_{\mathbf{b}}' \left(S_n \circ g_{\mathbf{a}}\right)' \circ g_\mathbf{c} \right| \leq 4 \cdot 2^{-n} e^{2 \delta n} \leq \sigma^{1/10} $$
for $n$ large enough, since $\delta<1/10$ and $\varepsilon_0<1$.
Hence, if $ |\left(S_{2n} \Phi \circ g_{\mathbf{a} \mathbf{b}} - S_{2n} \Phi \circ g_{\mathbf{a} \mathbf{c}}\right)'|  \leq \sigma^{1/10} $, then $\left| \left( S_n \Phi \circ g_{\mathbf{b}} - S_n \Phi \circ g_{\mathbf{c}} \right)' \right| \leq 2\sigma^{1/10} $, and it follows that $$ 2^{-2n-\tilde{n}}\#\{ (\mathbf{b},\mathbf{c}) \in (\{ 0,1 \}^{n})^2, \mathbf{d} \in \{0,1\}^{\tilde{n}} \ |  \ \exists x \in S_{\mathbf{d}}, \ \left|\left(S_{2n} \Phi \circ g_{\mathbf{a} \mathbf{b}} - S_{2n} \Phi \circ g_{\mathbf{a} \mathbf{c}}\right)'(x)\right| \leq \sigma^{1/10} \}  $$
$$ \leq  2^{-2n-\tilde{n}}\#\{ (\mathbf{b},\mathbf{c}) \in (\{ 0,1 \}^{n})^2, \mathbf{d} \in \{0,1\}^{\tilde{n}} \ |  \ \exists x \in S_{\mathbf{d}}, \ \left|\left(S_{n} \Phi \circ g_{\mathbf{b}} - S_{n} \Phi \circ g_{\mathbf{c}}\right)'(x)\right| \leq 2\sigma^{1/10} \}. $$

The derivative can be further simplified, using the special $\Phi$ that we chose.  Indeed, we see that, for any $x \in [0,1]$:
$$ \left( S_n \Phi \circ g_{\mathbf{b}} \right)'(x) = \sum_{j=1}^n \Phi'\left( g_{b_j \dots b_n}(x) \right) g_{b_j \dots b_n}'(x) = \sum_{j=1}^n \widehat{\Phi}(b_{n-j+1}) \kappa_{\mathbf{b}}^j(x) , $$
where $\widehat{\Phi}(0) := 1$ and $\widehat{\Phi}(1)=-1$, and $\kappa_{\mathbf{b}}^{n-j+1}(x) := g_{b_j \dots b_n}'(x)$ (recall Remark 5.1). 
Define $$ X_n^{x}(\mathbf{b}) := \sum_{j=1}^n \widehat{\Phi}(b_{n-j-1}) \kappa_{\mathbf{b}}^{j}(x).$$ The associated maps $\rho_i$ are $\rho_{n-j+1}(\mathbf{b},x) := g_{b_j}'(g_{b_{j+1} \dots b_n} x). $ We can then rewrite our cardinal in a more compact form, as follows:
$$ 2^{-2n-\tilde{n}}\#\left\{ (\mathbf{b},\mathbf{c}) \in (\{ 0,1 \}^n)^{2}, \mathbf{d} \in \{0,1\}^{\tilde{n}} \ | \ \exists x \in S_{\mathbf{d}}, \ \left| X_n^{x}(\mathbf{b})-X_n^x(\mathbf{c})\right| \leq 2 \sigma^{1/10} \right\}$$
$$ = 2^{-{\tilde{n}}-n} \underset{\mathbf{c} \in \{0,1\}^n}{\sum_{\mathbf{d} \in \{0,1\}^{\tilde{n}}}} 2^{-n} \#\left\{ \mathbf{b} \in \{ 0,1 \}^n \ | \ \exists x \in S_{\mathbf{d}}, \ X_n^{x}(\mathbf{b}) \in [a_\mathbf{c}(x) - 2 \sigma^{1/10}, a_{\mathbf{c}}(x) + 2 \sigma^{1/10}] \right\} $$
with $ a_\mathbf{c}(x) := X_n^x(\mathbf{c}) $. We then wish to apply the previous lemma. To this end, we check first that the $\rho_i$ have small fluctuations: since $\text{diam}(S_{\mathbf{c}}) \lesssim 2^{-\tilde{n}} e^{\delta \tilde{n}} \leq (\sigma^{1/10})^3$, and since $g_{b_{n-i+1} \dots b_n}$ is Lipschitz with constant $(e^{2 \delta}/2)^i$, we see that $\rho_i$ has small enough fluctuations (see the bounds in Lemma 5.1). We then also need to check that the target $a_\mathbf{c}(x)$ has small fluctuations. This is done using the bounds found in lemma 5.1 again:
$$ |a_{\mathbf{c}}(x) - a_{\mathbf{c}}(y)| \leq \sum_i \kappa_{\mathbf{c}}^i(y) |\kappa_{\mathbf{c}}^i(x) / \kappa_{\mathbf{c}}^i(y) -1| \lesssim \sigma^{1/2} |x-y| \leq (\sigma^{1/10})^3 |x-y| .$$
Hence Lemma 5.2 applies, and gives:

$$ 2^{-{\tilde{n}}-n} \underset{\mathbf{c} \in \{0,1\}^n}{\sum_{\mathbf{d} \in \{0,1\}^{\tilde{n}}}} 2^{-n} \#\left\{ \mathbf{b} \in \{ 0,1 \}^n \ | \ \exists x \in S_{\mathbf{d}}, \ X_n^{x}(\mathbf{b}) \in [a_\mathbf{c}(x) - 2 \sigma^{1/10}, a_{\mathbf{c}}(x) + 2 \sigma^{1/10}] \right\}$$

$$ \leq \left( \frac{4}{3} \right)^2 \left(2 \sigma^{1/10} \right)^{\alpha_0} e^{ 2 \alpha_0 \delta n } + 2 \cdot \left( \frac{3}{4} \right)^n \leq  \delta^{-1/2} \sigma^{1/50}  $$
provided that $n$ is taken large enough and using the fact that $(4/3)^2 \cdot 2 \sigma^{\alpha_0/10} e^{2 \alpha_0 \delta n} < \delta^{-1/2} 2^{-1} \sigma^{1/50}$ since $\delta<\varepsilon_0 \varepsilon_1/400$. \end{proof}

\begin{lemma}
Let $n$ be large enough. Let $\sigma \in [e^{-5 \varepsilon_0 n},\delta^{-1} e^{-\varepsilon_0 \varepsilon_1 n/3} ]$. Let $\mathbf{a} \in \{0,1\}^n$.  Then:
$$ 8^{-n} \# \left\{ (\mathbf{b}, \mathbf{c},\mathbf{d}) \in \left(\{0,1\}^n\right)^3 , \  |S_{2n} \Phi \circ g_{\mathbf{a} \mathbf{b}}(x_{\mathbf{d}}) - S_{2n} \Phi \circ g_{\mathbf{a} \mathbf{c}}(x_{\mathbf{d}}) | \leq \sigma \right\} \leq  2 \delta^{-1/2} \sigma^{1/50} $$
\end{lemma}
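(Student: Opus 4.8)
The plan is to deduce this ``value'' non-concentration estimate from the ``derivative'' non-concentration estimate of Lemma 5.3 by the classical observation that a $C^1$ function whose derivative stays away from zero on an interval cannot have its values concentrated near any target, so that the only pairs $(\mathbf{b},\mathbf{c})$ causing trouble are the ones already controlled by Lemma 5.3. Throughout, fix the word $\mathbf{a}$ and, for $(\mathbf{b},\mathbf{c})\in(\{0,1\}^n)^2$, write $G_{\mathbf{b}\mathbf{c}}:=S_{2n}\Phi\circ g_{\mathbf{a}\mathbf{b}}-S_{2n}\Phi\circ g_{\mathbf{a}\mathbf{c}}$. Each $\mathbf{d}\in\{0,1\}^n$ splits uniquely as $\mathbf{d}=\mathbf{d}'\mathbf{e}$ with $\mathbf{d}'\in\{0,1\}^{\tilde n}$ and $\mathbf{e}\in\{0,1\}^{n-\tilde n}$, and since $x_{\mathbf{d}}\in S_{\mathbf{d}}\subset S_{\mathbf{d}'}$, the point $x_{\mathbf{d}}$ sits in the coarse cylinder $S_{\mathbf{d}'}$. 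I would then split the count over $\mathbf{d}$ according to whether the coarse cylinder $S_{\mathbf{d}'}$ is \emph{bad} for $(\mathbf{b},\mathbf{c})$, meaning there exists $x\in S_{\mathbf{d}'}$ with $|G_{\mathbf{b}\mathbf{c}}'(x)|\le\sigma^{1/10}$, or \emph{good} otherwise.

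For the bad part, I would bound the number of fine extensions $\mathbf{e}$ trivially by $2^{n-\tilde n}$ and invoke Lemma 5.3, which states exactly that the number of bad triples $(\mathbf{b},\mathbf{c},\mathbf{d}')$, normalized by $2^{-2n-\tilde n}$, is at most $\delta^{-1/2}\sigma^{1/50}$; multiplying by $2^{n-\tilde n}$ turns the normalization $2^{-2n-\tilde n}$ into $2^{-3n}=8^{-n}$, so the bad part contributes at most $\delta^{-1/2}\sigma^{1/50}$. For the good part, I first note that by Remark 5.1 the factors $\Phi'\circ g_a$ are constant ($\pm1$), so the formula from the proof of Lemma 5.3 shows that $G_{\mathbf{b}\mathbf{c}}'$ is a finite signed combination of the Lipschitz functions $g_{\cdot}'$, hence continuous. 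On a good coarse cylinder $|G_{\mathbf{b}\mathbf{c}}'|>\sigma^{1/10}$ throughout, so $G_{\mathbf{b}\mathbf{c}}'$ has constant sign there, $G_{\mathbf{b}\mathbf{c}}$ is strictly monotone, and integrating the derivative bound shows that $\{x\in S_{\mathbf{d}'}:|G_{\mathbf{b}\mathbf{c}}(x)|\le\sigma\}$ is an interval of length at most $2\sigma^{9/10}$. Since the fine cylinders $S_{\mathbf{d}'\mathbf{e}}$, $\mathbf{e}\in\{0,1\}^{n-\tilde n}$, partition the interval $S_{\mathbf{d}'}$ into intervals of length at least $2^{-n}e^{-2\delta n}$ (Lemma 5.1), this resonance interval meets at most $2\sigma^{9/10}2^{n}e^{2\delta n}+1$ of them, hence contains at most that many points $x_{\mathbf{d}'\mathbf{e}}$. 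Summing over the $4^n$ pairs $(\mathbf{b},\mathbf{c})$ and the $2^{\tilde n}$ coarse cylinders, and normalizing by $8^{-n}$, the good part contributes at most a constant times $2^{\tilde n}\sigma^{9/10}e^{2\delta n}\simeq\sigma^{2/5}e^{2\delta n}$, using $2^{\tilde n}\simeq\sigma^{-1/2}$ (plus an exponentially small term $2^{\tilde n-n}$ coming from the ``$+1$'').

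Putting the two parts together gives $8^{-n}\#\{\dots\}\le\delta^{-1/2}\sigma^{1/50}+C\sigma^{2/5}e^{2\delta n}$ for some absolute $C$. Since $\sigma\le\delta^{-1}e^{-\varepsilon_0\varepsilon_1 n/3}$ and $\delta<\varepsilon_0\varepsilon_1/2000$, the exponent $2/5$ beats $1/50$ by a margin that swallows the distortion $e^{2\delta n}$, so $C\sigma^{2/5}e^{2\delta n}\le\delta^{-1/2}\sigma^{1/50}$ once $n$ is large enough, which yields the claimed bound $2\delta^{-1/2}\sigma^{1/50}$. The argument is essentially bookkeeping rather than a new idea; the point requiring care is the role of the zoom level $\tilde n$: one needs $2^{-\tilde n}\approx\sqrt\sigma$ to lie comfortably between the width $2\sigma^{9/10}$ of the resonance interval on a good cylinder and the unit coarse scale (so that the good cylinders are genuinely ``frozen'' at the right resolution), and one must keep track that all the accumulated $e^{O(\delta n)}$ distortion factors are dominated, which is exactly what the smallness of $\delta$ relative to $\varepsilon_0\varepsilon_1$ buys.
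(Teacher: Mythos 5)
Your proposal is correct and follows essentially the same route as the paper: the same splitting of $\mathbf{d}$ into a coarse part at scale $\tilde n$ and a fine part, the same use of the derivative non-concentration lemma to discard the ``bad'' coarse cylinders (with the trivial $2^{n-\tilde n}$ count of fine extensions), and the same monotonicity/spacing argument on the ``good'' cylinders, where the resonance set has length at most $2\sigma^{9/10}$ and meets few of the points $x_{\mathbf{d}}$. The only differences are cosmetic bookkeeping (you carry the bound $\sigma^{2/5}e^{O(\delta n)}$ to the end instead of the paper's intermediate simplification to $\sigma^{1/10}$, and you use the cylinder-length lower bound rather than the spacing of the $x_{\mathbf{c}}$), which does not change the argument.
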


\begin{proof}
 We cut the word $\mathbf{d}$ in two : $\mathbf{d} := \tilde{\mathbf{d}} \widehat{\mathbf{d}}$, with $\tilde{\mathbf{d}} \in \{0,1\}^{\tilde{n}}$ and $\widehat{\mathbf{d}} \in \{0,1\}^{n-\tilde{n}}$, where $\tilde{n} := \lfloor (\log_2 \sigma)/2 \rfloor$. The desired cardinal becomes
$$ 8^{-n} \#\left\{ (\mathbf{b},\mathbf{c},\tilde{\mathbf{d}},\widehat{\mathbf{d}}) \in (\{0,1\}^n)^2 \times \{0,1\}^{\tilde{n}} \times \{0,1\}^{n-\tilde{n}}, \ |S_{2n} \Phi \circ g_{\mathbf{a} \mathbf{b}}(x_{\tilde{\mathbf{d}} \widehat{\mathbf{d}}}) - S_{2n} \Phi \circ g_{\mathbf{a} \mathbf{c}}(x_{\tilde{\mathbf{d}} \widehat{\mathbf{d}}})|\leq \sigma \right\} .$$

From there, the strategy is taken from \cite{BD17}: we argue that for most of the words $\mathbf{b},\mathbf{c},\tilde{\mathbf{d}}$, the derivative of the inner function is large enough, thus spreading the $x_{\tilde{\mathbf{d}}\widehat{\mathbf{d}}}$.
Indeed, if we denote by $D_n(\sigma^{1/10})$ the set of all $(\mathbf{b},\mathbf{c},\tilde{\mathbf{d}})$ for which there exists $x \in \mathbb{S}_{\tilde{\mathbf{d}}}$ such that $$ |\left(S_{2n} \Phi \circ g_{\mathbf{a} \mathbf{b}} - S_{2n} \Phi \circ g_{\mathbf{a} \mathbf{c}}\right)'(x)| \leq \sigma^{1/10} ,$$ then the previous lemma bounds $2^{-2n-\tilde{n}} \# D_n(\sigma^{1/10})$, and we can write:

$$  8^{-n} \#\left\{ (\mathbf{b},\mathbf{c},\tilde{\mathbf{d}},\widehat{\mathbf{d}}), \ |S_{2n} \Phi \circ g_{\mathbf{a} \mathbf{b}}(x_{\tilde{\mathbf{d}} \widehat{\mathbf{d}}}) - S_{2n} \Phi \circ g_{\mathbf{a} \mathbf{c}}(x_{\tilde{\mathbf{d}} \widehat{\mathbf{d}}})|\leq \sigma \right\} . $$
$$ \leq 8^{-n} \# \left\{ (\mathbf{b}, \mathbf{c}, \tilde{\mathbf{d}}, \widehat{\mathbf{d}}) \ | \ (\mathbf{b},\mathbf{c},\tilde{\mathbf{d}}) \notin D_n(\sigma^{1/10}), \  |S_{2n} \Phi \circ g_{\mathbf{a} \mathbf{b}}(x_{\tilde{\mathbf{d}} \widehat{\mathbf{d}}}) - S_{2n} \Phi \circ g_{\mathbf{a} \mathbf{c}}(x_{\tilde{\mathbf{d}} \widehat{\mathbf{d}}})|\leq \sigma \right\} + \delta^{-1/2} \sigma^{1/50} .$$

Now, $(\mathbf{b},\mathbf{c},\tilde{\mathbf{d}}) \notin D_n(\sigma^{1/10})$ means that $$ \inf_{S_{\tilde{\mathbf{d}}}} \left| \left( S_{2n} \Phi \circ g_{\mathbf{a} \mathbf{b}} - S_{2n} \Phi \circ g_{\mathbf{a} \mathbf{c}} \right)' \right| \geq \sigma^{1/10} .$$

It is elementary to check that for any absolutely continuous map $f : I \rightarrow \mathbb{R}$ satisfying $\inf_I f' > 0$, we have, for any interval $J$, $\text{diam}(f^{-1}(J)) \leq (\inf_I f')^{-1} \text{diam}(J)$. Hence, if $(\mathbf{b},\mathbf{c},\tilde{\mathbf{d}}) \notin D_n(\sigma^{1/10})$, denoting by $I_{\mathbf{a},\mathbf{b},\mathbf{c},\tilde{\mathbf{d}}}(\sigma) := \{ x \in \mathbb{S}_{\tilde{\mathbf{d}}}, (S_{2n} \Phi \circ g_{\mathbf{a} \mathbf{b}} - S_{2n} \Phi \circ g_{\mathbf{a} \mathbf{c}})(x) \in [-\sigma,\sigma] \}$, we have $\text{diam}(I_{\mathbf{a},\mathbf{b},\mathbf{c},\tilde{\mathbf{d}}}(\sigma)) \leq 2 \sigma^{9/10}$, and :
$$ 2^{-(n-\tilde{n})} \# \{ \widehat{\mathbf{d}} \in \{0,1\}^{n-\tilde{n}} , \ | S_{2n} \Phi \circ g_{\mathbf{a} \mathbf{b}}(x_{\tilde{\mathbf{d}}\widehat{\mathbf{d}}}) - S_{2n} \Phi \circ g_{\mathbf{a} \mathbf{c}}(x_{\tilde{\mathbf{d}}\widehat{\mathbf{d}}}) | \leq \sigma  \}   $$
$$ = 2^{-(n-\tilde{n})} \# \{ \widehat{\mathbf{d}} \in \{0,1\}^{n-\tilde{n}} ,  \ x_{\tilde{\mathbf{d}} \widehat{\mathbf{d}}}  \in I_{\mathbf{a},\mathbf{b},\mathbf{c},\tilde{\mathbf{d}}}(\sigma)  \} $$
$$ \leq 2^{-(n-\tilde{n})}\left(1+\frac{\text{diam}(I_{\mathbf{a},\mathbf{b},\mathbf{c},\tilde{\mathbf{d}}}(\sigma))}{2^{-n} e^{-4 \delta n} }\right) \leq \sigma^{1/10} $$
since the $x_{\mathbf{c}}$ are spaced out by at least $2^{-n} e^{-4 \delta n}$ from each other (Lemma 2.1), and $2^{\tilde{n}} \simeq \sigma^{-1/2}$.
\end{proof}

\begin{lemma}

Let $n$ be large enough. Let $\sigma \in [e^{-4 \varepsilon_0 n}, e^{-\varepsilon_0 \varepsilon_1 n/2}]$. Let $\mathbf{a} \in \{0,1\}^n$.  Then:
$$ 8^{-n} \# \left\{ (\mathbf{b}, \mathbf{c}, \mathbf{d}) \in \left(\{0,1\}^n\right)^3 , \ | 4^n g_{\mathbf{a} \mathbf{b}}'(x_\mathbf{d}) - 4^{n} g_{\mathbf{a} \mathbf{c}}'(x_{\mathbf{d}}) | \leq \sigma \right\} \leq  {\delta^{-1}} \sigma^{1/60}  .$$
\end{lemma}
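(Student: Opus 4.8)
The plan is to reduce Lemma 5.5 to the anti-concentration estimate for Birkhoff sums proved in Lemma 5.4, by turning the statement about the derivatives $4^n g_{\mathbf{a}\mathbf{b}}'$ into one about $S_{2n}\Phi$. The starting point is Lemma 5.1 applied to a word of length $2n$: for any $\mathbf{a},\mathbf{b}\in\{0,1\}^n$ we have $-\ln g_{\mathbf{a}\mathbf{b}}' = 2n(\ln 2 + \ln z_\delta) + \delta\,S_{2n}\Phi\circ g_{\mathbf{a}\mathbf{b}}$, hence
$$ 4^n g_{\mathbf{a}\mathbf{b}}'(x_{\mathbf d}) = z_\delta^{-2n}\,e^{-\delta u}, \qquad u := S_{2n}\Phi\big(g_{\mathbf{a}\mathbf{b}}(x_{\mathbf d})\big), $$
and similarly $4^n g_{\mathbf{a}\mathbf{c}}'(x_{\mathbf d}) = z_\delta^{-2n}e^{-\delta v}$ with $v := S_{2n}\Phi(g_{\mathbf{a}\mathbf{c}}(x_{\mathbf d}))$. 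Since $|\Phi|\le 1$ one has $|u|,|v|\le 2n$, and since $|\ln z_\delta|\le\delta$ one has $z_\delta^{-2n}\ge e^{-2\delta n}$; the mean value theorem applied to $t\mapsto e^{-\delta t}$ on $[-2n,2n]$ then gives
$$ \big|4^n g_{\mathbf{a}\mathbf{b}}'(x_{\mathbf d}) - 4^n g_{\mathbf{a}\mathbf{c}}'(x_{\mathbf d})\big| = z_\delta^{-2n}\big|e^{-\delta u}-e^{-\delta v}\big| \ge \delta\,e^{-4\delta n}\,|u-v|. $$
Consequently, if the left-hand side is $\le\sigma$ then $|u-v|\le\sigma'$ with $\sigma' := \delta^{-1}e^{4\delta n}\sigma$, so the set of triples $(\mathbf b,\mathbf c,\mathbf d)$ counted in Lemma 5.5 is contained in the set counted in Lemma 5.4 at the scale $\sigma'$.

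Next I would verify that $\sigma'$ lies in the admissible window $[e^{-5\varepsilon_0 n},\delta^{-1}e^{-\varepsilon_0\varepsilon_1 n/3}]$ of Lemma 5.4, given $\sigma\in[e^{-4\varepsilon_0 n},e^{-\varepsilon_0\varepsilon_1 n/2}]$. The lower bound is immediate since $\delta^{-1}e^{4\delta n}\ge 1$, so $\sigma'\ge e^{-4\varepsilon_0 n}\ge e^{-5\varepsilon_0 n}$; for the upper bound one needs $4\delta\le\varepsilon_0\varepsilon_1(1/2-1/3)=\varepsilon_0\varepsilon_1/6$, i.e. $\delta\le\varepsilon_0\varepsilon_1/24$, which holds since $\delta<\varepsilon_0\varepsilon_1/2000$. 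Lemma 5.4 then yields
$$ 8^{-n}\,\#\Big\{(\mathbf b,\mathbf c,\mathbf d)\in(\{0,1\}^n)^3 : \big|4^n g_{\mathbf{a}\mathbf{b}}'(x_{\mathbf d}) - 4^n g_{\mathbf{a}\mathbf{c}}'(x_{\mathbf d})\big|\le\sigma\Big\} \le 2\delta^{-1/2}(\sigma')^{1/50} = 2\,\delta^{-13/25}\,e^{2\delta n/25}\,\sigma^{1/50}. $$

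It then remains to absorb the extra factors into the target form $\delta^{-1}\sigma^{1/60}$: writing $\sigma^{1/50}=\sigma^{1/60}\sigma^{1/300}$ and using $\sigma\le e^{-\varepsilon_0\varepsilon_1 n/2}$, the coefficient of $\sigma^{1/60}$ is at most $2\,\delta^{12/25}\,e^{(2\delta/25-\varepsilon_0\varepsilon_1/600)n}$, whose exponent is negative because $\delta<\varepsilon_0\varepsilon_1/2000<\varepsilon_0\varepsilon_1/48$, so this coefficient is $\le\delta^{-1}$ once $n$ is large enough, giving the claim. I do not expect a genuine obstacle here — the argument is the routine ``change variables, then invoke the previous lemma'' step in the spirit of \cite{BD17}. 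The only point requiring care is the bookkeeping of scales: one must make sure that inflating $\sigma$ by $\delta^{-1}e^{4\delta n}$ keeps it inside the window where Lemma 5.4 applies, and that the stray powers of $\delta$ and the factor $e^{2\delta n/25}$ are dominated by $\delta^{-1}$ times a small power of $\sigma$ — both succeed precisely because $\delta$ is much smaller than $\varepsilon_0\varepsilon_1$.
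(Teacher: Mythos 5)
Your proof is correct and follows essentially the same route as the paper: convert the derivative gap into a gap of Birkhoff sums via Lemma 5.1 (the paper bounds $\delta^{-1}\left|\ln(4^n g_{\mathbf{a}\mathbf{b}}'(x_{\mathbf d}))-\ln(4^n g_{\mathbf{a}\mathbf{c}}'(x_{\mathbf d}))\right|$ by the Lipschitz constant of $\ln$, you invert $t\mapsto e^{-\delta t}$ by the mean value theorem — the same estimate up to the harmless $e^{4\delta n}$ versus $e^{2\delta n}$), inflate the scale by $\delta^{-1}e^{O(\delta n)}$, invoke the previous lemma, and absorb the stray factors, with your explicit verification that the inflated scale stays in that lemma's window being a step the paper leaves implicit. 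The only blemish is a slip of sign in an exponent: the coefficient of $\sigma^{1/60}$ is $2\delta^{-13/25}e^{(2\delta/25-\varepsilon_0\varepsilon_1/600)n}$ rather than $2\delta^{12/25}e^{(2\delta/25-\varepsilon_0\varepsilon_1/600)n}$, and the latter is exactly what you get after dividing by the target $\delta^{-1}$, so your comparison and the conclusion stand unchanged.
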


\begin{proof}

Let $(\mathbf{b},\mathbf{c},\mathbf{d})$ be such that $|4^n g_{\mathbf{a} \mathbf{b}}'(x_\mathbf{d}) - 4^n g_{\mathbf{a} \mathbf{c}}'(x_{\mathbf{d}})| \leq \sigma$. Then:
$$ |S_{2n} \Phi \circ g_{\mathbf{a} \mathbf{b}}(x_{\mathbf{d}}) - S_{2n} \Phi \circ g_{\mathbf{a} \mathbf{c}}(x_{\mathbf{d}})| = \delta^{-1} \left| \ln\left( 4^n g_{\mathbf{a} \mathbf{b}}'(x_{\mathbf{d}}) \right) - \ln\left( 4^n g_{\mathbf{a} \mathbf{c}}'(x_{\mathbf{d}}) \right) \right| $$
$$ \leq \delta^{-1} e^{2 \delta n} \left|4^n g_{\mathbf{a} \mathbf{b}}'(x_{\mathbf{d}})  - 4^{n}  g_{\mathbf{a} \mathbf{c}}'(x_{\mathbf{d}})  \right| \leq \delta^{-1} e^{2 \delta n} \sigma.$$

We can then conclude using the previous lemma:
$$ 8^{-n} \# \left\{ (\mathbf{b}, \mathbf{c}, \mathbf{d}) \in \left(\{0,1\}^n\right)^3 , \ | 4^n g_{\mathbf{a} \mathbf{b}}'(x_\mathbf{d}) - 4^{n} g_{\mathbf{a} \mathbf{c}}'(x_{\mathbf{d}}) | \leq \sigma \right\} $$
$$ \leq 8^{-n} \# \left\{ (\mathbf{b}, \mathbf{c},\mathbf{d}) \in \left(\{0,1\}^n\right)^3 , \  |S_{2n} \Phi \circ g_{\mathbf{a} \mathbf{b}}(x_{\mathbf{d}}) - S_{2n} \Phi \circ g_{\mathbf{a} \mathbf{c}}(x_{\mathbf{d}}) | \leq \delta^{-1} \sigma e^{2 \delta n} \right\} $$
$$ \leq 2 \delta^{-1/2} (\delta^{-1} \sigma e^{2 \delta n})^{1/50} \leq \delta^{-1} \sigma^{1/60}  $$ \end{proof}

\begin{lemma}

We call a couple $(\mathbf{a},\mathbf{d}) \in (\{0,1\}^{n})^2$ regular if: 
$$\forall \sigma  \in [ e^{- 4 \varepsilon_0 n} , e^{-\varepsilon_0 \varepsilon_1 n/2} ], \quad 4^{-n} \# \{ \mathbf{b} , \mathbf{c} \in \{0,1\}^{n} , \ | 4^n g_{\mathbf{a} \mathbf{b}}'(x_\mathbf{d}) - 4^n g_{\mathbf{a} \mathbf{c}}'(x_\mathbf{d}) | \leq \sigma \} \leq \sigma^{1/100}.$$

Denote by $\mathcal{R}_{n}^2 \subset \{0,1\}^{(k+1)n}$ the set of regular couples. Then most couples are regular:

$$  4^{-n} \# \left( (\{0,1\}^{n})^2 \setminus \mathcal{R}_n^{2} \right) \lesssim  \delta^{-1} e^{- \varepsilon_0 \varepsilon_1 n/400} .$$

\end{lemma}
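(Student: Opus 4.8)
The plan is to deduce this lemma from the previous one — the $8^{-n}$-estimate on $|4^n g_{\mathbf{a}\mathbf{b}}'(x_\mathbf{d}) - 4^n g_{\mathbf{a}\mathbf{c}}'(x_\mathbf{d})|$, which comes with the exponent $1/60$ — by a Chebyshev/averaging argument combined with a discretization of the scale parameter $\sigma$. The key numerical point is that the exponent $1/60$ produced there is strictly larger than the target exponent $1/100$, which leaves just enough room for the losses incurred below.

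First I would discretize the range of scales. For fixed $(\mathbf{a},\mathbf{d})$ set
\[ N_{\mathbf{a},\mathbf{d}}(\sigma) := 4^{-n} \# \{ (\mathbf{b},\mathbf{c}) \in (\{0,1\}^n)^2 , \ |4^n g_{\mathbf{a}\mathbf{b}}'(x_\mathbf{d}) - 4^n g_{\mathbf{a}\mathbf{c}}'(x_\mathbf{d})| \le \sigma \} , \]
which is nondecreasing in $\sigma$. Choosing $\sigma_m := 2^m e^{-4\varepsilon_0 n}$ for $m = 0, \dots, M$, with $M \simeq n$ taken so that $\sigma_M$ is comparable to $e^{-\varepsilon_0 \varepsilon_1 n/2}$, monotonicity reduces the claim to controlling the couples $(\mathbf{a},\mathbf{d})$ that fail $N_{\mathbf{a},\mathbf{d}}(\sigma_m) \le 2^{-1/100}\sigma_m^{1/100}$ for some $m$: indeed, if this inequality holds for all $m$, then for $\sigma \in [\sigma_m,\sigma_{m+1}]$ one gets $N_{\mathbf{a},\mathbf{d}}(\sigma) \le N_{\mathbf{a},\mathbf{d}}(\sigma_{m+1}) \le 2^{-1/100}\sigma_{m+1}^{1/100} = (\sigma_{m+1}/2)^{1/100} = \sigma_m^{1/100} \le \sigma^{1/100}$, so $(\mathbf{a},\mathbf{d})$ is regular. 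Hence $(\{0,1\}^n)^2 \setminus \mathcal{R}_n^2 \subseteq \bigcup_{m=0}^M \{ (\mathbf{a},\mathbf{d}) : N_{\mathbf{a},\mathbf{d}}(\sigma_m) > 2^{-1/100}\sigma_m^{1/100} \}$.

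Next, for a fixed word $\mathbf{a}$ and a fixed scale $\sigma_m$, I would apply Markov's inequality to the previous lemma. Since $2^{-n}\sum_{\mathbf{d}} N_{\mathbf{a},\mathbf{d}}(\sigma_m)$ is exactly the $8^{-n}$-cardinal bounded there by $\delta^{-1}\sigma_m^{1/60}$, keeping only the $\mathbf{d}$'s that violate the bound yields
\[ 2^{-n} \# \{ \mathbf{d} \in \{0,1\}^n : N_{\mathbf{a},\mathbf{d}}(\sigma_m) > 2^{-1/100}\sigma_m^{1/100} \} \le 2^{1/100} \delta^{-1} \sigma_m^{1/60 - 1/100} = 2^{1/100} \delta^{-1} \sigma_m^{1/150} . \]
Averaging over the $2^n$ words $\mathbf{a}$ gives $4^{-n} \# \{ (\mathbf{a},\mathbf{d}) : N_{\mathbf{a},\mathbf{d}}(\sigma_m) > 2^{-1/100}\sigma_m^{1/100} \} \le 2^{1/100} \delta^{-1} \sigma_m^{1/150}$.

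Finally I would take the union bound over the $M + 1 \lesssim n$ scales. Since $\sigma_m \le e^{-\varepsilon_0 \varepsilon_1 n/2}$ implies $\sigma_m^{1/150} \le e^{-\varepsilon_0 \varepsilon_1 n/300}$, this gives
\[ 4^{-n} \# \big( (\{0,1\}^n)^2 \setminus \mathcal{R}_n^2 \big) \le \sum_{m=0}^M 2^{1/100}\delta^{-1}\sigma_m^{1/150} \lesssim \delta^{-1} n\, e^{-\varepsilon_0\varepsilon_1 n/300} \lesssim \delta^{-1} e^{-\varepsilon_0\varepsilon_1 n/400} \]
for $n$ large, the last step absorbing the polynomial factor $n$ into the improvement of the exponent from $1/300$ to $1/400$. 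I do not expect any genuine obstacle here; the only thing to monitor is that the three successive exponent losses — $1/60 \to 1/150$ at the Markov step, then $1/150$ evaluated at the largest scale giving $e^{-\varepsilon_0\varepsilon_1 n/300}$, then $1/300 \to 1/400$ to swallow the $O(n)$ union bound — all remain strictly positive, which they do.
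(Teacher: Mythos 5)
Your argument is correct and is essentially the paper's own proof: a dyadic discretization of the scale $\sigma$ (powers of $2$ instead of powers of $e$), Markov's inequality against the previous lemma's $\delta^{-1}\sigma^{1/60}$ bound, and a union bound over the $O(n)$ scales, with exactly the same exponent bookkeeping $1/60-1/100=1/150$, then $e^{-\varepsilon_0\varepsilon_1 n/300}$ at the top scale, then $1/400$ to absorb the polynomial factor $n$. The only (harmless) point to note is that your largest scale $\sigma_M$ may exceed $e^{-\varepsilon_0\varepsilon_1 n/2}$ by a factor of $2$, just outside the stated range of the previous lemma, but that lemma's proof covers scales up to $\delta^{-1}e^{-\varepsilon_0\varepsilon_1 n/3}$, so this costs nothing.
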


\begin{proof}

We use a dyadic decomposition: for each $\sigma \in [e^{-4 \varepsilon_0 n }, e^{-\varepsilon_0 \varepsilon_1 n/2 }]$, there exists $l \in \llbracket 0 , 4 \varepsilon_0 n \rrbracket$ such that $e^{-(l+1)} \leq \sigma \leq e^{-l}$. Hence
$$ \mathcal{R}_n^2 \subset \bigcap_{l \in \llbracket 0, \lfloor 4 \varepsilon_0 n \rfloor \rrbracket} \mathcal{R}_{n,l}^2 ,$$
where we denoted $$\mathcal{R}_{n,l}^2 := \left\{ (\mathbf{a},\mathbf{d}) \in (\{0,1\}^n)^2 \ \Big{|} \ 4^{-n} \# \{ \mathbf{b} , \mathbf{c} \in (\{0,1\}^{n})^2 , \ | 4^n g_{\mathbf{a} \mathbf{b}}'(x_\mathbf{d}) - 4^n g_{\mathbf{a} \mathbf{c}}'(x_\mathbf{d}) | \leq e^{-l} \} \leq e^{-(l+1)/100}  \right\}.$$
Markov's inequality and the previous lemma gives us the bound
$$ 4^{-n} \# \left( (\{0,1\}^{n})^2 \setminus \mathcal{R}_{n,l} \right) \leq e^{(l+1)/100} 16^{-n} \sum_{\mathbf{a},\mathbf{d}} \# \{ \mathbf{b} , \mathbf{c} \in (\{0,1\}^{n})^2 , \ | 4^n g_{\mathbf{a} \mathbf{b}}'(x_\mathbf{d}) - 4^n g_{\mathbf{a} \mathbf{c}}'(x_\mathbf{d}) | \leq e^{-l} \}$$ $$ \leq e^{(l+1)/100}  \left( \delta^{-1} e^{-l/60} \right) \lesssim  \delta^{-1} e^{-\varepsilon_0 \varepsilon_1 n/300}.$$
Which gives, by summing over $0 \leq l \leq 4 \varepsilon_0 n$, for $n$ large enough:
$$ 4^{-n} \# \left( (\{0,1\}^{n})^2 \setminus \mathcal{R}_n^{2} \right) \leq  \delta^{-1} e^{- \varepsilon_0 \varepsilon_1 n/400} .$$ \end{proof}

\begin{lemma}

Notice that a block $\mathbf{A}=\mathbf{a_0} \dots \mathbf{a}_k \in \{0,1\}^{(k+1)n}$ is regular if $(\mathbf{a}_{j-1},\mathbf{a}_j)$ is a regular couple, for all $j \in \llbracket 1,k\rrbracket$. Denote the set of regular blocks by $\mathcal{R}^{k+1}_n$. Then, most blocks are regular:

$$ 2^{-(k+1)n} \# \left( \{0,1\}^{(k+1)n} \setminus \mathcal{R}_n^{k+1} \right) \lesssim  \delta^{-1} e^{- \varepsilon_0 \varepsilon_1 n/400} .$$

\end{lemma}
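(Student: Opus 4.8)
The plan is to deduce the statement from Lemma 5.5 by a bare union bound, once the reduction to regular couples is made explicit. First I would unwind the definitions: the condition defining a regular block $\mathbf{A} = \mathbf{a}_0 \dots \mathbf{a}_k$ in Lemma 4.2 asks, for each $j \in \llbracket 1,k\rrbracket$ and each scale $\sigma \in [e^{-4\varepsilon_0 n}, e^{-\varepsilon_0\varepsilon_1 n/2}]$, the non-concentration estimate for $\zeta_{\mathbf{A},j}(\mathbf{b}) = 4^n g_{\mathbf{a}_{j-1}\mathbf{b}}'(x_{\mathbf{a}_j})$, and this is exactly the condition defining the regular couple $(\mathbf{a}_{j-1},\mathbf{a}_j)$ in Lemma 5.5. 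Hence a block $\mathbf{A}$ is regular as soon as all consecutive pairs $(\mathbf{a}_{j-1},\mathbf{a}_j)$, $j=1,\dots,k$, are regular couples; contrapositively, if $\mathbf{A} \notin \mathcal{R}_n^{k+1}$ then $(\mathbf{a}_{j-1},\mathbf{a}_j) \notin \mathcal{R}_n^2$ for at least one $j$, so that
\[
\{0,1\}^{(k+1)n} \setminus \mathcal{R}_n^{k+1} \ \subseteq \ \bigcup_{j=1}^{k} \left\{ \mathbf{A} \in \{0,1\}^{(k+1)n} \ : \ (\mathbf{a}_{j-1},\mathbf{a}_j) \notin \mathcal{R}_n^2 \right\}.
\]

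Next I would bound each term in this union. Fixing $j$ and specifying the two subwords $\mathbf{a}_{j-1}$ and $\mathbf{a}_j$ leaves the remaining $k-1$ subwords $\mathbf{a}_i$ ($i \neq j-1,j$) entirely free, so
\[
\# \left\{ \mathbf{A} \ : \ (\mathbf{a}_{j-1},\mathbf{a}_j) \notin \mathcal{R}_n^2 \right\} = 2^{(k-1)n} \cdot \# \left( (\{0,1\}^n)^2 \setminus \mathcal{R}_n^2 \right),
\]
and therefore, dividing by $2^{(k+1)n}$ and invoking Lemma 5.5,
\[
2^{-(k+1)n} \# \left\{ \mathbf{A} \ : \ (\mathbf{a}_{j-1},\mathbf{a}_j) \notin \mathcal{R}_n^2 \right\} = 4^{-n} \# \left( (\{0,1\}^n)^2 \setminus \mathcal{R}_n^2 \right) \lesssim \delta^{-1} e^{-\varepsilon_0 \varepsilon_1 n / 400}.
\]
Summing the union bound over the $k$ values of $j$ and absorbing the factor $k$ — which depends only on $\gamma = 1/100$, not on $n$ or $\delta$ — into the implied constant yields
\[
2^{-(k+1)n} \# \left( \{0,1\}^{(k+1)n} \setminus \mathcal{R}_n^{k+1} \right) \leq k\,\delta^{-1} e^{-\varepsilon_0\varepsilon_1 n/400} \lesssim \delta^{-1} e^{-\varepsilon_0\varepsilon_1 n/400},
\]
which is the claimed estimate.

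I do not expect any genuine obstacle here: all the analytic content is already contained in Lemmas 5.2–5.5, and this last step is purely combinatorial. The only points requiring (minimal) care are the bookkeeping in the middle display — i.e. checking that the marginal of the pair $(\mathbf{a}_{j-1},\mathbf{a}_j)$ under the uniform measure on $\{0,1\}^{(k+1)n}$ is precisely the uniform measure on $(\{0,1\}^n)^2$, which is immediate since the subwords are independent — and keeping in mind that $k$ is a fixed constant so that the sum over $j$ costs nothing in the $\lesssim$ notation.
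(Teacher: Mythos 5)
Your argument is correct and is essentially the paper's own proof: the paper likewise reduces the bound to the inclusion of the non-regular blocks in the union over $j$ of the events $(\mathbf{a}_{j-1},\mathbf{a}_j)\notin\mathcal{R}_n^2$ and then applies Lemma 5.5 with a union bound, absorbing the fixed constant $k$. Your write-up merely spells out the marginal-counting step that the paper leaves implicit.
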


\begin{proof}

The result follows from the previous lemma, noticing that $$ \mathcal{R}_n^{k+1} \subset \bigcap_{j=1}^k \{ \mathbf{A} \in \{0,1\}^{(k+1)n}, \ (\mathbf{a}_{j-1},\mathbf{a}_{j}) \in \mathcal{R}_n^2 \}. $$\end{proof}

\section{Acknowledgments}

Thanks are due to my PhD advisor, Frederic Naud, for a discussion on quasiconformal mappings and Brownian motion that lead to this paper. I would also like to thank Nguyen Viet Dang for suggesting that I find a way not to use Dolgopyat's estimates. Frederic Naud pointed out to me that an elementary argument exists in the work of Bourgain-Dyatlov, which led me to this explicit approach.  This work is part of the author's PhD and is funded by the Ecole Normale Superieure de Rennes.

\end{document}